\documentclass[12pt,a4paper]{article}
\usepackage{latexsym,amssymb,amsmath,a4wide,theorem}
\usepackage[a4paper, total={6in, 10in}]{geometry}
\usepackage[isolatin]{inputenc}
\usepackage{mathrsfs}

\newcommand{\R}{\mathbb{R}}

\usepackage[T1]{fontenc}
\usepackage{times}
\usepackage{cite}
\usepackage{amsfonts}
\usepackage{graphicx}
\usepackage[all]{xy}
\usepackage{esint}
\usepackage[titletoc,title]{appendix}
\usepackage{indentfirst}
\usepackage{float}
\usepackage{cancel}
\usepackage{booktabs}
\usepackage{makecell}
\usepackage{color}
\usepackage{epstopdf}
\usepackage{hyperref}
\setlength{\parindent}{2em}

\parindent=4mm

\def\sqr#1#2{{\vbox{\hrule height.#2pt
     \hbox{\vrule width.#2pt height#1pt \kern#1pt
           \vrule width.#2pt}
     \hrule height.#2pt}}}
\def\square{\sqr74}
\def\qed{{\unskip\nobreak\hfil\penalty50\hskip1em
             \hbox{}\nobreak\hfil\square \parfillskip=0pt
             \finalhyphendemerits=0 \par\goodbreak \vskip8mm}}

\newenvironment{proof}{\medskip\par\noindent{\bf Proof\/}.\quad}{\qquad
\raisebox{-0.5mm}{\rule{1.5mm}{1mm}}\vspace{6pt}}

\newtheorem{Thm}{Theorem}[section]

\newtheorem{Lem}{Lemma}[section]

\newtheorem{Rek}{Remark}[section]



\numberwithin{equation}{section}


\title{{Multiplicity and concentration of dual solutions for a Helmholtz system}\thanks{This paper was supported by NSFC(12261107) and Yunnan Key Laboratory of Modern Analytical Mathematics and Applications (No. 202302AN360007).}}

\author{Ruowen Qiu$^{1,2}$,~ Fei Yuan$^{2}$~and~Fukun Zhao$^{2,3}$\thanks{Corresponding author. E-mall address: fukunzhao@163.com}\\
1.Department of Mathematics, Zhejiang Normal University,\\
2.Department of Mathematics, Yunnan Normal University,\\
3.Yunnan Key Laboratory of Modern Analytical Mathematics and Applications\\
 Cheng Gong, Kunming, 650500}
\date{}

\date{}
\begin{document}
\maketitle
\noindent{\bf Abstract}
\quad In this paper, we are concerned with the nonlinear Helmholtz system of Hamiltonian type
\begin{equation*}
\left\{\begin{array}{l}
-\Delta u-k^2 u=P(x)|v|^{p-2}v,\quad \text{in}\ \R^N, \\
-\Delta v-k^2v=Q(x)|u|^{q-2}u,\quad \text{in}\ \R^N,
\end{array} \right.
\end{equation*}
where $N\geq3$, $P,Q:\R^N\rightarrow\R$ are two positive continuous functions, the  exponents $p,q>2$ satisfy $\frac{1}{p}+\frac{1}{q}>\frac{N-2}{N}$. First, we obtained the existence of a ground state solution via a dual variational method.
Moreover, the concentration behavior of such dual ground state solutions is established
 as $k\rightarrow\infty$, where a rescaling technique and the generalized Birman-Schwinger operator are involved. In addition,  we also investigated the relation between the number of solutions and the topology of the set of the global maxima of the functions $P$ and $Q$.\\

\noindent{\bf  Keywords} \quad Helmholtz system of Hamiltonian type, Dual variational method, Concentration of solutions, Multiple solutions. \\
\noindent{\bf 2020 MSC}\quad 35J50; 35J47.
\numberwithin{equation}{section}

\section{Introduction and main result}
The Helmholtz equation is a fundamental partial differential equation used in physics to describe various phenomena in fields such as acoustics, electromagnetism, and quantum mechanics. It can be understood as a special case of the scalar equation, such as:
\begin{equation}\label{eq1.0}
-\Delta u-k^2u=Q(x)|u|^{p-2}u.
\end{equation}
The meaning of the proportionality constant $k$ depends on the physical problem addressed.
One reason for the great importance of the Helmholtz equation is that in dynamic analysis, it can be interpreted as the time harmonic representation of the standing wave (or solitary wave) solutions  $\psi(t, x) = e^{ikt} u(x)$ of the nonlinear Klein-Gordon equation given by:
\begin{equation*}
	\frac{\partial^2 \psi}{\partial t^2}(t, x) - \Delta \psi(t, x) = Q(x)|\psi(t, x)|^{p-2}\psi(t, x), \quad (t, x) \in \mathbb{R} \times \mathbb{R}^N.
\end{equation*}
\par
The variational method is a powerful tool to study the existence of solutions of scalar equations. However, it seems that a direct variational functional corresponding to 
the equation \eqref{eq1.0} does not exists since $0$ is contained in the essential spectrum of the Schr\"{o}dinger operator $-\Delta-k^2$. So, how to apply the variational method to obtain the existence of (weak) solutions of \eqref{eq1.0} is a challenging issue. To overcome the difficulty that the Schr\"{o}dinger operator $-\Delta-k^2$ is not invertible, in their pioneering paper \cite{Evequoz-Weth2014ARMA}, Ev\'equoz and Weth considered the existence and multiplicity of real solutions of \eqref{eq1.0} with compactly supported nonlinearities, and transferred the original equation into an elliptic equation settled on a bounded domain. Therefore, a solution, and if the inhomogeneous source term of the equation is odd with respect to the second variable, a sequence of pairs of solutions of the equation \eqref{eq1.0} was obtained via the (symmetrical) linking theorem, and the norms of which go to infinity. To remove the restriction that the nonlinearity has compact support, in another interesting paper \cite{Evequoz-Weth2015AM}, Ev\'equoz and Weth developed a dual variational framework for \eqref{eq1.0}. More precisely, by using the properties of the fundamental solution of 
\begin{equation*}
  -\Delta \Phi-\Phi=\delta,
\end{equation*}
some estimates were established for the resolvent operator $\mathscr{R}$ (see subsection 2.1), and the original equation \eqref{eq1.0} was transferred into the dual integral equation
\begin{equation*}
  u=\mathbf{R}(Q(x)|u|^{p-2}u),~u\in L^p(\mathbb{R}^N),
\end{equation*}
where $\mathbf{R}$ denotes the real part of the resolvent operator $\mathscr{R}$. Hence, the existence of nontrivial solutions of \eqref{eq1.0} with periodic $Q$ as well as in the vanishing case where $Q(x)\to 0$ as $|x|\to\infty$, where $N\geq3$, $\frac{2(N+1)}{(N-1)}<p<\frac{2 N}{N-2}$ and $Q\in L^{\infty}\left(\mathbb{R}^N\right)$ is nonnegative. A new nonvanishing theorem related to the dual integral equation plays a key role in the proof. 
\par
After then, the existence and multiplicity of solutions to the nonlinear Helmholtz equation \eqref{eq1.0} have been extensively studied, particularly when $Q(x)$ is periodic or asymptotically periodic\cite{Evequoz2017NA}, $Q$ satisfies the global maximum condition\cite{Evequoz2017AMPA}, $Q(x)$ is sign-changing\cite{Mandel-Scheider-Yecsil2021CVPDE}, and in the critical case \cite{Evequoz-Yecsil2020}. In particular, in \cite{Evequoz2017AMPA}, 
 for large frequency number $k>0$, Ev\'equoz obtained the existence and multiplicity of solutions of \eqref{eq1.0}, where the exponent $p$ is subcritical, and the weight function $Q$ is continuous, nonnegative and satisfies the condition
\begin{equation*}
  \limsup\limits_{|x|\to\infty}Q(x)<\sup\limits_{x\in\mathbb{R}^N}Q(x).
\end{equation*}
Moreover, in the limit $k\to\infty$, sequences of solutions associated with ground states of the dual integral equation are shown to concentrate, after rescaling, at global maximum points of the function $Q$. It seems that this paper is the only known work concerning the asymptotical behavior of dual solutions of \eqref{eq1.0}. Other results for the Helmholtz equation \eqref{eq1.0} can be found in  \cite{Bonheure-Casteras-Mandel2019JMLS,Casteras-Mandel2021IMRN,Weth-Yecsil2021ANPA,Chen-Evequoz-Weth2021SIAM}.
\par
Similar to the Schr\"{o}dinger system, which appears in nonlinear optics and condensed matter physics, the Helmholtz system is particularly interesting from a viewpoint of mathematics. However, as far as we know, there are only a few works concerned with this topic, except for \cite{Mandel-Scheider2018NoDEA, Mandel-Scheider2020ANA, Ding-Wang2023JDE}.
In \cite{Mandel-Scheider2018NoDEA, Mandel-Scheider2020ANA}, Mandel and Scheider considered a nonlinear Helmholtz system of gradient type which is similar to the Schr\"{o}dinger system considered by Lin and Wei \cite{Lin-Wei2005} and \cite{Lin-Wei2006JDE}. In \cite{Ding-Wang2023JDE}, Ding and Wang considered a nonlinear and subcritical Helmholtz system of Hamiltonian type, which is related to the classical Lane-Emden system (see e.g. \cite{Li-Wei-Zou2023JMPA}). Here we refer to \cite{deFigueiredo1994TAMS} and \cite{Hulshof-Robertus1993JFA} for the critical hyperbola for elliptic systems of Hamiltonian type. A dual variational framework was established via a generalized Birman-Schwinger operator $\mathbf{K}_{p, q}^{P, Q}$, and the existence and multiplicity of dual solutions were obtained both for the periodic potential case and the compact potential case. A nonvanishing theorem was established to recover the loss of compactness. It should be noted that the boundedness of the linear operator $\mathbf{K}_{p, q}^{P, Q}$ defined here relies on an $L^\alpha-L^\beta$ estimate of Guti\'errez for the resolvent operator $\mathscr{R}$, see \cite[Theorem 6]{Gutierrez2004MA}. It seems that this is the only known result for the Helmholz system of Hamiltonian type.
\par
Motivated by \cite{Ding-Wang2023JDE} and \cite{Evequoz2017AMPA}, in this paper, we study the existence, multiplicity and the concentration behavior of dual solutions of the nonlinear Helmholtz system 
\begin{equation}\label{eq1.1}
\left\{\begin{array}{l}
-\Delta u-k^2 u=P(x)|v|^{p-2}v,\quad \text{in}\  \R^N, \\
-\Delta v-k^2v=Q(x)|u|^{q-2}u,\quad\text{in}\ \R^N,
\end{array} \right.
\end{equation}
where $k>0$, $N\geq3$, $P,Q\in L^\infty(\R^N)$ are two positive continuous functions, the  exponents $p,q$ satisfy 
$$\frac{2 N}{N-1}<p,q \leq \infty,\quad  \frac{N-2}{N}<\frac{1}{q}+\frac{1}{p}<\frac{N-1}{N+1}.$$
\par
There are some difficulties in our system \eqref{eq1.1}.
First, \eqref{eq1.1} does not have a direct variational framework. To see this,
we set $k=1$, so the system \eqref{eq1.1} can be expressed in the form:
$$\mathcal{A}\binom{v}{u}:=\left(\begin{array}{cc}
0 & -\Delta-1 \\
-\Delta-1 & 0
\end{array}\right)\binom{v}{u}=\binom{P(x)|v|^{p-2} v}{Q(x)|u|^{q-2} u} .$$
The primary challenge arises from the fact that $\sigma(\mathcal{A})=\sigma_{\mathrm{ess}}(\mathcal{A})=\mathbb{R}$, where $\sigma(\mathcal{A})$ and $\sigma_{\text{ess}}(\mathcal{A})$ denote the spectrum and the essential spectrum of the operator $\mathcal{A}$, respectively. Consequently, the operator $-\Delta-1$ is not invertible, which means one can not find out the variational functional corresponding to the system \eqref{eq1.1}. Second, note that the exponents $p$ and $q$ can be different, which results in two different dual relations when rescaling. Therefore, some previous results on the Helmholtz equation \eqref{eq1.0}, such as the estimation of the nonlocal term appearing in the dual energy functional\cite[Lemma 2.4]{Evequoz2017AMPA}, are no longer applicable to our problem. Third, since the weight functions $P(x)$ and $Q(x)$ are allowed to be different, a competition between $P(x)$ and $Q(x)$ may occur when studying the concentration of dual solutions. So it is not easy to determine the concentration set of dual solutions, which is related to the properties of $P(x)$ and $Q(x)$. Unlike the known dual variational methods, such as \cite{Alves-Soares-Yang2003ANS}, we need to know more properties of the operator $\mathbf{R}$ and make more accurate estimates.
\par
To state our main results, some notations are in order.
$$M_P=\{x\in \R^N:P(x)=\bar{P}\},\quad   M_Q=\{x\in \R^N:Q(x)=\bar{Q}\}, $$
where $\bar{P}:=\sup\limits_{x\in\R^N}P(x)$ and $\bar{Q}:=\sup\limits_{x\in\R^N}Q(x)$. We assume
\begin{itemize}
\item[$(PQ_1)$]  $P,Q\in L^\infty(\R^N)$ are continuous functions and $P(x),Q(x)\geq\alpha>0$ for all $x\in\R^N$;
\item[$(PQ_2)$]  $\bar{P}>P_\infty:=\limsup\limits_{|x|\rightarrow\infty}P(x)$ and $\bar{Q}>Q_\infty:=\limsup\limits_{|x|\rightarrow\infty}Q(x)$;
\item[$(PQ_3)$] $M_P\cap M_Q\neq\emptyset$.
\end{itemize}
\par
Note that $(PQ_1)$ and $(PQ_2)$ imply that the sets $M_P$ and $M_Q$ are bounded, and hence $M=\{(x,x):x\in M_P\cap M_Q \}\neq\emptyset$ is bounded.
For any $\delta>0$, let us denote by $M_\delta=\{(x,x)\in\R^N\times\R^N:\text{dist}((x,x),M)\leq\delta\}$ the $\delta$-neighborhood of $M$ in $\R^N\times\R^N$. Additionally, if $Y$ is a closed subset of a topological space $X$, then $\text{cat}_X(Y)$ denotes the Ljusternik-Schnirelman category of $Y$ in $X$, i.e., the least number of closed and contractible sets in $X$ which cover $Y$. 
Now we state our main results as follows.
\begin{Thm}\label{Thm1.1}
Let $N \geq 3$ and $p$, $q>\frac{2 N}{N-1}$ with $\frac{N-2}{N}<\frac{1}{p}+\frac{1}{q}<\frac{N-1}{N+1}$. If $(PQ_1)$$-$$(PQ_3)$ hold, then there is $k_0>0$ such that for all $k>k_0$ the system \eqref{eq1.1} admits a pair of dual ground state. In addition, let $\{k_n\}\subset(k_0,\infty)$ with $k_n\rightarrow\infty$ as $n\rightarrow\infty$, and consider for each $n$, a pair of dual ground state $(u_n,v_n)$ of 
\begin{equation*}
\left\{\begin{array}{l}
-\Delta u-k_n^2 u=P(x)|v|^{p-2}v,\quad \text{in}\  \R^N, \\
-\Delta v-k_n^2v=Q(x)|u|^{q-2}u,\quad\text{in}\ \R^N.
\end{array} \right.
\end{equation*}
Then there exists a pair of $(x_0,x_0)\in M$, a dual ground state $(u_0,v_0)$ of 
\begin{equation}\label{eq1.5}
\left\{\begin{array}{l}
-\Delta u- u=\bar{P}|v|^{p-2}v,\quad \text{in}\  \R^N, \\
-\Delta v-v=\bar{Q}|u|^{q-2}u,\quad \text{in}\  \R^N,
\end{array} \right.
\end{equation}
and a sequence $\{x_n\}\subset\R^N$ such that, up to a subsequence, $\lim\limits_{n\rightarrow\infty}x_n=x_0\in M_P\cap M_Q $ and
$$k_n^{\frac{2p}{(q-1)(p-1)-1}}u_n\bigg(\frac{\cdot}{k_n}+x_n\bigg)\rightarrow u_0\ \text{in}\ L^{q}(\R^N),\quad k_n^{\frac{2q}{(q-1)(p-1)-1}}v_n\bigg(\frac{\cdot}{k_n}+x_n\bigg)\rightarrow v_0\ \text{in}\ L^{p}(\R^N)$$
as $n\rightarrow\infty$.
\end{Thm}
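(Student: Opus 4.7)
The plan is to work in the dual variational framework developed in \cite{Ding-Wang2023JDE}. Introducing dual variables $\phi=P|v|^{p-2}v\in L^{p'}(\R^N)$ and $\psi=Q|u|^{q-2}u\in L^{q'}(\R^N)$ and using the real part $\mathbf{R}_k=\mathrm{Re}\,\mathscr{R}_k$ of the Helmholtz resolvent, the system \eqref{eq1.1} becomes the Euler--Lagrange system of a strongly indefinite mountain--pass functional $J_k$ on $L^{p'}\times L^{q'}$. Boundedness of the bilinear nonlocal term $(\phi,\psi)\mapsto\int\phi\,\mathbf{R}_k\psi$ follows from the generalized Birman--Schwinger operator $\mathbf{K}^{P,Q}_{p,q}$ and the Guti\'errez $L^\alpha$--$L^\beta$ estimate, which is valid exactly under the hypothesis $\frac{N-2}{N}<\frac{1}{p}+\frac{1}{q}<\frac{N-1}{N+1}$. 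Let $c_k>0$ denote the mountain--pass (equivalently, ground--state) level of $J_k$, and $c_\infty>0$ the analogous level for the limit system \eqref{eq1.5}.

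For the existence part, the key step is to establish (after proper rescaling) the strict comparison $c_k<c_\infty$ for $k$ large. Using the identity $\mathbf{R}_k=k^{N-2}G(k\,\cdot)\ast(\cdot)$, where $G$ is the kernel of $\mathbf{R}=\mathbf{R}_1$, one builds a test pair for $J_k$ by rescaling a ground state of $J_\infty$ and centering it at a point $x^*\in M_P\cap M_Q$ supplied by $(PQ_3)$; continuity of $P,Q$ at $x^*$ together with $P(x^*)=\bar P,\ Q(x^*)=\bar Q$ turns the weight errors into $o(1)$ perturbations, giving the upper bound. A Palais--Smale sequence at level $c_k$ is then produced by the mountain--pass theorem, kept bounded by the Birman--Schwinger estimate, and made strongly convergent via the nonvanishing theorem of \cite{Ding-Wang2023JDE}. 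Condition $(PQ_2)$ prevents loss at infinity: any weak limit along a translation with $|y_n|\to\infty$ solves a system with weights bounded by $P_\infty<\bar P,\ Q_\infty<\bar Q$, whose ground--state level strictly exceeds $c_\infty$, contradicting the upper bound. This yields a ground--state pair $(u_k,v_k)$ for all $k\geq k_0$.

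For concentration, given $k_n\to\infty$ and ground states $(u_n,v_n)$, define $\ti u_n(y)=k_n^{\gamma}u_n(y/k_n+x_n)$ and $\ti v_n(y)=k_n^{\delta}v_n(y/k_n+x_n)$ with $\gamma=\frac{2p}{(p-1)(q-1)-1}$ and $\delta=\frac{2q}{(p-1)(q-1)-1}$: these exponents are forced by the scaling of $\mathbf{R}_{k_n}$ so that $(\ti u_n,\ti v_n)$ solves the $k=1$ system with weights $P(\cdot/k_n+x_n)$ and $Q(\cdot/k_n+x_n)$. Choose $x_n$ by applying the nonvanishing theorem to the rescaled pair, which is possible because the rescaled energies converge to $c_\infty>0$. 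Uniform energy bounds then yield boundedness of $(\ti u_n,\ti v_n)$ in $L^q\times L^p$; passing to a weak limit $(u_0,v_0)\neq(0,0)$ along a subsequence (with $P(x_n)\to P_0$, $Q(x_n)\to Q_0$), it solves a Helmholtz system with the constants $P_0,Q_0$ and has ground--state level $\leq c_\infty$. Strict monotonicity of the ground--state level in the pair $(P,Q)$ forces $P_0=\bar P,\ Q_0=\bar Q$, while $(PQ_2)$ prevents $|x_n|\to\infty$; hence $x_n\to x_0$ with $(x_0,x_0)\in M$ and $(u_0,v_0)$ is a ground state of \eqref{eq1.5}. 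Strong convergence in $L^q\times L^p$ then follows from the matching of energies via a Brezis--Lieb splitting adapted to the asymmetric pair.

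The hardest point is the simultaneous identification of $x_0\in M_P\cap M_Q$: because $p\neq q$ the two dual equations give asymmetric estimates and the two weights $P,Q$ attain their maxima independently, so the argument must pin the rescaled limit energy at $c_\infty$ \emph{exactly}, in order for the strict monotonicity of the ground--state level in each weight separately to force $P_0=\bar P$ \emph{and} $Q_0=\bar Q$. Condition $(PQ_3)$ is precisely what makes this joint concentration compatible with the preliminary bound $c_k\leq c_\infty+o(1)$; a secondary technical difficulty is the Brezis--Lieb splitting of the nonlocal bilinear term in the asymmetric $L^{p'}\times L^{q'}$ framework, to be handled via the translation invariance of $\mathbf{R}$ and the Guti\'errez bounds.
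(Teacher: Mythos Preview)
Your outline captures the right skeleton (dual framework, rescaling exponents, test functions centered at $x^*\in M_P\cap M_Q$, monotonicity of the constant-coefficient ground-state level in the weights), but there is a genuine gap in the existence part: you have conflated two \emph{different} constant-coefficient limit problems and, as a result, stated the key comparison inequality in the wrong direction.

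You define $c_\infty$ as the ground-state level of \eqref{eq1.5}, i.e.\ the system with the \emph{maximal} constants $\bar P,\bar Q$; in the paper's notation this is $c_M$. For this quantity one has $c_\varepsilon\geq c_M$ for every $\varepsilon>0$ (see Lemma~\ref{Lem2.6}: replacing $P_\varepsilon,Q_\varepsilon$ by the larger constants $\bar P,\bar Q$ can only lower the level), with $c_\varepsilon\to c_M$ as $\varepsilon\to 0^+$. So your ``strict comparison $c_k<c_\infty$'' is simply false. What makes the Palais--Smale argument go through is a \emph{second} constant-coefficient problem, the one with the limsup weights $P_\infty,Q_\infty$ from $(PQ_2)$; call its level $c_\infty^{\mathrm{asym}}$. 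Monotonicity gives $c_M<c_\infty^{\mathrm{asym}}$, hence $c_\varepsilon<c_\infty^{\mathrm{asym}}$ for $\varepsilon$ small, and this is the correct threshold below which compactness holds. Your sentence ``whose ground-state level strictly exceeds $c_\infty$'' shows you sense the right mechanism, but since you never introduced $c_\infty^{\mathrm{asym}}$ as a separate object, the contradiction you announce (``contradicting the upper bound'') does not close: the upper bound $c_\varepsilon\le c_M+o(1)$ does not by itself rule out a PS sequence losing mass at infinity unless you compare with $c_\infty^{\mathrm{asym}}$, not with $c_M$.

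A second, more technical point: the paper does not obtain compactness of the PS sequence for $J_\varepsilon$ by ``nonvanishing $+$ weak limit after translation''. At fixed $\varepsilon$, translating by $y_n$ with $|y_n|\to\infty$ does not produce a constant-coefficient limit problem (the weights $P_\varepsilon(\cdot+y_n)$ need not converge), so the concentration-compactness picture you sketch is delicate. Instead, the paper proves a genuine PS condition on $\{J_\varepsilon<c_\infty^{\mathrm{asym}}\}$ (Lemma~\ref{Lem3.2}) by a tightness argument: a decay estimate for the nonlocal bilinear term (Lemma~\ref{Lem3.1}) decouples the tail $1_{\{|x|\ge 2r\}}z_n$ from the bulk, and one compares the tail against the $(P_\infty,Q_\infty)$-problem via the multiplicative map $(\psi,\phi)\mapsto((Q_\varepsilon/Q_\infty)^{1/q}\psi,(P_\varepsilon/P_\infty)^{1/p}\phi)$. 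For the concentration statement, the paper likewise transports $z_n$ to the $(\bar P,\bar Q)$-problem by the analogous map $(\psi,\phi)\mapsto((Q_{\varepsilon_n}/\bar Q)^{1/q}\psi,(P_{\varepsilon_n}/\bar P)^{1/p}\phi)$, obtains a minimizing sequence for $J_0$, and then applies a representation (splitting) lemma for PS sequences of the translation-invariant functional $J_0$ (Lemma~\ref{Lem3.3}) together with Ekeland. Your alternative route via nonvanishing on the rescaled $(u_n,v_n)$ and strict monotonicity is plausible in spirit, but you would still need the correct two-threshold structure above and a careful Brezis--Lieb argument for the bilinear term $\int\phi\,\mathbf R\psi$ in the asymmetric $L^{p'}\times L^{q'}$ setting; as written, the proposal does not supply either.
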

\begin{Rek}
For the definition of the dual ground state of system \eqref{eq1.1} in Theorem \ref{Thm1.1}, we refer readers to see the definition in \cite{Evequoz2017AMPA}. Specifically, we seek to find the nontrivial critical point $(\psi,\phi)$ of the energy functional $J_\varepsilon$ (see subsection 2.1) associated with the corresponding integral system \eqref{eq2.4} at the mountain pass level. 
\end{Rek}

\begin{Thm}\label{Thm1.2} Under the assumptions of Theorem \ref{Thm1.1}, for every $\delta>0$, there exists $k(\delta)>0$ such that the system \eqref{eq1.1} has at least $\textup{cat}_{M_\delta}(M)$ pairs of nontrivial solutions for all $k>k(\delta)$.
\end{Thm}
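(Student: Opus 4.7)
The plan is a Ljusternik-Schnirelman category argument in the Benci-Cerami style, built on top of the dual variational framework already used for Theorem \ref{Thm1.1}. Writing $\varepsilon=1/k$, let $J_\varepsilon$ denote the dual energy functional from subsection 2.1, whose nontrivial critical points produce (via the rescaling of Theorem \ref{Thm1.1}) solutions of \eqref{eq1.1}; let $c_\varepsilon$ be its mountain-pass level, let $\mathcal{N}_\varepsilon$ be the associated Nehari-type set, and let $c_\infty$ be the corresponding level of the limit system \eqref{eq1.5}. Theorem \ref{Thm1.1} already gives $c_\varepsilon\to c_\infty$ as $\varepsilon\to 0$ and a Palais-Smale condition for $J_\varepsilon$ below a threshold converging to $c_\infty$.

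\noindent\textbf{Step 1 (plant map).} For each $(x,x)\in M$ I construct $\Phi_\varepsilon(x,x)\in\mathcal{N}_\varepsilon$ by translating a fixed dual ground state $(u_0,v_0)$ of \eqref{eq1.5} so that it is centered at $x$, applying the two asymmetric rescalings $k^{2p/((q-1)(p-1)-1)}$ and $k^{2q/((q-1)(p-1)-1)}$ dictated by the Hamiltonian structure, multiplying by a smooth cutoff supported in a fixed neighborhood of $x$, and projecting radially onto $\mathcal{N}_\varepsilon$. Using $(PQ_1)$--$(PQ_3)$ and continuity of $P,Q$, standard estimates on the dual functional give
\[
\max_{(x,x)\in M} J_\varepsilon(\Phi_\varepsilon(x,x)) \;=\; c_\infty + h(\varepsilon), \qquad h(\varepsilon)\to 0,
\]
so for $\varepsilon$ small the image $\Phi_\varepsilon(M)$ lies in the sublevel $J_\varepsilon^{c_\varepsilon+h(\varepsilon)}\cap\mathcal{N}_\varepsilon$.

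\noindent\textbf{Step 2 (barycenter).} On $\mathcal{N}_\varepsilon$ define
\[
\beta_\varepsilon(\psi,\phi)\;=\;\frac{\int_{\R^N}\chi(\varepsilon x)\,\rho(\psi,\phi)(x)\,dx}{\int_{\R^N}\rho(\psi,\phi)(x)\,dx},
\]
where $\chi$ is a smooth cutoff compatible with the bounded set $M_P\cap M_Q$ and $\rho$ is a natural nonnegative density built from $P|\psi|^{p'}$ and $Q|\phi|^{q'}$ (after undoing the rescaling). The crucial concentration statement is: if $\varepsilon_n\to 0$ and $(\psi_n,\phi_n)\in\mathcal{N}_{\varepsilon_n}$ with $J_{\varepsilon_n}(\psi_n,\phi_n)\to c_\infty$, then up to a subsequence $\beta_{\varepsilon_n}(\psi_n,\phi_n)\to x_\ast\in M_P\cap M_Q$. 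This is an adaptation of the concentration analysis underlying Theorem \ref{Thm1.1}: the generalized Birman-Schwinger operator $\mathbf{K}^{P,Q}_{p,q}$, the nonvanishing lemma of \cite{Ding-Wang2023JDE}, and hypothesis $(PQ_2)$ rule out vanishing and mass escaping to infinity, while energy minimality forces the concentration point into $M_P\cap M_Q$. Consequently $\beta_\varepsilon\circ\Phi_\varepsilon\colon M\to M_\delta$ is homotopic to the inclusion $M\hookrightarrow M_\delta$ for all sufficiently small $\varepsilon$, and the classical Benci-Cerami/Ljusternik-Schnirelman estimate yields
\[
\#\bigl\{\text{critical points of } J_\varepsilon \text{ in } J_\varepsilon^{c_\varepsilon+h(\varepsilon)}\cap\mathcal{N}_\varepsilon\bigr\} \;\geq\; \textup{cat}_{M_\delta}(M),
\]
each critical point producing a nontrivial pair of solutions of \eqref{eq1.1}. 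Taking $k(\delta)=1/\varepsilon(\delta)$ large enough finishes the proof.

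The main obstacle I expect is twofold. First, the asymmetry $p\neq q$ forces two different scaling exponents for $u$ and $v$, so the density $\rho$ in $\beta_\varepsilon$ and the rescaled translates used in $\Phi_\varepsilon$ must be chosen compatibly with the $L^{p'}/L^{q'}$ Hamiltonian duality; a mismatch would either break the uniform convergence $J_\varepsilon(\Phi_\varepsilon(\cdot))\to c_\infty$ on $M$ or the concentration statement for $\beta_\varepsilon$. The more serious difficulty is the Palais-Smale condition just above $c_\varepsilon$: because the Helmholtz operator $-\Delta-k^2$ is not positive, classical concentration-compactness does not apply directly and one must reprove a splitting lemma using the $L^\alpha$-$L^\beta$ resolvent estimate of Guti\'errez together with the nonvanishing theorem of \cite{Ding-Wang2023JDE}, carefully tracking the asymmetric rescaling so as to exclude both vanishing profiles and profiles escaping to infinity via $(PQ_2)$.
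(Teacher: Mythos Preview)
Your proposal is correct and follows essentially the same Benci--Cerami/Ljusternik--Schnirelman scheme as the paper: a plant map from $M$ into a low sublevel of $\mathcal{N}_\varepsilon$, a barycenter map back into $M_\delta$, and the category inequality via the homotopy $\beta_\varepsilon\circ\Phi_\varepsilon\simeq j$.

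Two small points of friction with the paper's conventions are worth noting. First, what you call $c_\infty$ (the level of \eqref{eq1.5}) is the paper's $c_M$; the paper reserves $c_\infty$ for the \emph{different} level attached to the system with coefficients $P_\infty,Q_\infty$ from $(PQ_2)$, and it is the strict inequality $c_M<c_\infty$ that makes the Palais--Smale condition (Lemma~\ref{Lem3.2}) hold on the sublevel set $\Sigma_\varepsilon$ --- so keep the two levels separate. Second, in the $\varepsilon$-rescaled framework the $k$-powers are already absorbed, so the plant map is built directly from a dual ground state $(U,V)\in L^{q'}\times L^{p'}$ of \eqref{eq2.9} by translation, cutoff, and Nehari projection (Lemmas~\ref{Lem2.4}--\ref{Lem2.5}); no extra rescaling is needed, and the natural densities are $|\psi|^{q'}$ and $|\phi|^{p'}$ (your exponents are swapped). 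The paper also takes a barycenter in $\R^N\times\R^N$ with one component for each of $\psi,\phi$, rather than a single combined density; either choice works since Lemma~\ref{Lem4.1} shows both components concentrate at the same point of $M_P\cap M_Q$.
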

\par
The paper is organized as follows. In Section 2, the dual variational framework of the Helmholtz system \eqref{eq1.1} is established. Additionally, some preliminary results are also studied in this section. Section 3 is devoted to studying the Palais-Smale condition for the dual energy functional on the Nehari manifold below some limit energy level, which is based on a crucial estimate of the quadratic part of the functional, see Lemma \ref{Lem3.1}. To obtain the concentration of the dual ground state, we also establish a representation Lemma for the Palais-Smale sequences of the energy functional $J_{s,t}$ corresponding to the constant coefficient system. The proofs of Theorem \ref{Thm1.1} and \ref{Thm1.2} are given in Section 4.
\par
Throughout this paper, let $B_r(x)$ be the open ball in $\R^N$ of radius $r$ centered at $x$ and $B_r=B_r(0)$. For $1\leq q\leq\infty$, we write $\|\cdot\|_q$ instead of $\|\cdot\|_{L^q(\R^N)}$ for the standard norm of the Lebesgue space $L^q(\R^N)$. The symbols $\mathscr{S}$ and $\mathscr{S}^\prime$ denote the Schwartz space and the space of tempered distributions on $\mathbb{R}^N$, respectively. For $f\in\mathscr{S}^\prime$, we write $\widehat{f}$ to denote the Fourier transform of $f$. For any measurable set $E\subset \R^N$, $1_E$ denotes the characteristic function of the set $E$. In addition, we shall denote the conjugate exponent of $p$ by $p^\prime$ satisfying $\frac{1}{p}+\frac{1}{p^\prime}=1$. Finally, $C$ and $C_i(i=1,2,3,\cdots)$ are some positive constants that may change from line to line.
\section{The variational formulation and preliminary results}
\subsection{ Dual variational formulation}
Setting $\varepsilon= k^{-1}$, $u=\varepsilon^{\beta_1}\hat{u}$ and $ v=\varepsilon^{\beta_2}\hat{v}$, \eqref{eq1.1} can be rewritten as
 \begin{equation}\label{eq2.1}
\left\{\begin{array}{l}
-\varepsilon^2\Delta \hat{u}-\hat{u}=P(x)|\hat{v}|^{p-2}\hat{v},\quad \text{in}\ \R^N, \\
-\varepsilon^2\Delta \hat{v}-\hat{v}=Q(x)|\hat{u}|^{q-2}\hat{u},\quad\text{in}\ \R^N,
\end{array} \right.
\end{equation}
where $\beta_1=\frac{2p}{1-(q-1)(p-1)}$, $\beta_2=\frac{2q}{1-(q-1)(p-1)}$. Note that $\beta_1$ and $\beta_2$ are well-defined because of the range of $p,q$.
In addition,  if $(u,v)$ is a solution of the system
\begin{equation}\label{eq2.2}
\left\{\begin{array}{l}
-\Delta u- u=P_\varepsilon(x)|v|^{p-2}v,\quad \text{in}\ \R^N, \\
-\Delta v-v=Q_\varepsilon(x)|u|^{q-2}u,\quad \text{in}\ \R^N,
\end{array} \right.
\end{equation}
where $P_\varepsilon(x)=P(\varepsilon x)$ and $Q_\varepsilon(x)=Q(\varepsilon x)$. Then $(u(\varepsilon^{-1}x),v(\varepsilon^{-1}x))$ is a solution of the system \eqref{eq2.1}. Thus, in the sequel, we will study the equivalent system \eqref{eq2.2}.

To establish the dual variational framework of \eqref{eq2.2}, let us review some results from \cite{Ding-Wang2023JDE}. Let $\varepsilon>0$, the operator $-\Delta-(1+i\varepsilon):H^2(\R^N)\subset L^2(\R^N)\rightarrow L^2(\R^N)$ is an isomorphism. For any $f\in\mathscr{S}$, we can define the inverse of the above operator as follows 
$$\mathscr{R}_\varepsilon f(x)=[-\Delta-(1+i \varepsilon)]^{-1} f(x)=(2 \pi)^{-\frac{N}{2}} \int_{\mathbb{R}^N} e^{i x \cdot \xi} \frac{\hat{f}(\xi)}{|\xi|^2-(1+i \varepsilon)} \mathrm{d} \xi .$$
Moreover, there exists a linear operator $\mathscr{R}: \mathscr{S} \rightarrow \mathscr{S}^{\prime}$ given by
$$
\langle\mathscr{R} f, g\rangle:=\lim _{\varepsilon \rightarrow 0} \int_{\mathbb{R}^N}\left[\mathscr{R}_{\varepsilon} f\right](x) g(x) \mathrm{d} x=\int_{\mathbb{R}^N}[\Phi * f](x) g(x) \mathrm{d} x \quad \text { for } f, g \in \mathscr{S}
$$
with
$$
\Phi(x):=\frac{i}{4}(2 \pi|x|)^{\frac{2-N}{2}} H_{\frac{N-2}{2}}^{(1)}(|x|), \quad\forall x\in\R^N\setminus\{0\},
$$
where $\Phi(x)$ is also the fundamental solution of the Helmholtz equation $-\Delta u-u=\delta$ and $H_k^{(1)}$ is the Hankel function of the first kind of order $k$.  For any $f\in \mathscr{S}$, the function $u=\mathscr{R} f =\Phi\ast f\in \mathscr{C}^{\infty}\left(\mathbb{R}^N\right)$ is a solution of the inhomogeneous Helmholtz equation $-\Delta u-u=f$ satisfying some decay conditions. Our aim is to investigate the existence of real-valued solutions of \eqref{eq2.1} for $k>0$ large enough. Since $K(x)$ and $Q(x)$ are real numbers, we can infer that 
$$\Re(u)=\Re(\mathscr{R} f)=\Re((\Phi\ast f))=\Re(\Phi)\ast f.$$ 

Putting the real part of $\Phi$ by $\Psi$, as a consequence of \cite[Theorem 6]{Gutierrez2004MA}, we know the integral operator
$$\mathbf{R}:L^\alpha(\R^N)\rightarrow L^\beta(\R^N),\ \mathbf{R}f=\Psi\ast f$$ 
is bounded if $\frac{2}{N+1}\leq \frac{1}{\alpha}-\frac{1}{\beta}\leq\frac{2}{N}$, $\frac{1}{\alpha}>\frac{N+1}{2N}$ and $\frac{1}{\beta}<\frac{N-1}{2N}$, see also \cite[Theorem 2.1]{Ding-Wang2023JDE}.
\par
For $(u,v)\in L^{q}(\mathbb{R}^N)\times L^{p}(\mathbb{R}^N)$, it is easy to see that 
\begin{equation}\label{eq2.3}
\psi=Q_\varepsilon^{\frac{1}{q^\prime}}|u|^{q-2}u\in L^{q^\prime}(\R^N), \quad\phi=P_\varepsilon^{\frac{1}{p^\prime}}|v|^{p-2}v\in L^{p^\prime}(\R^N), 
\end{equation}  
and we will consider the following integral system 
\begin{equation}\label{eq2.4}
\left\{\begin{array}{l}
|\psi|^{q^\prime-2}\psi=Q_\varepsilon^{\frac{1}{q}}\mathbf{R}(P_\varepsilon^\frac{1}{p}\phi), \\
|\phi|^{p^\prime-2}\phi=P_\varepsilon^{\frac{1}{p}}\mathbf{R}(Q_\varepsilon^\frac{1}{q}\psi).
\end{array} \right.
\end{equation}
Note that the functions $P(x)$ and $Q(x)$ are different, we need the following definition of the generalized Birman-Schwinger Operator and some of its properties \cite[Lemma 2.3]{Ding-Wang2023JDE}:
\begin{Lem}\label{Lem2.1}
 Let $N \geq 3, p, q>\frac{2 N}{N-1}$ with $\frac{N-2}{N} \leq\frac{1}{p}+\frac{1}{q} \leq \frac{N-1}{N+1}$ and consider nonnegative functions $P, Q \in L^{\infty}\left(\mathbb{R}^N\right) \backslash\{0\}$, the generalized Birman-Schwinger operator is defined in the following way:

$$
\mathbf{K}_{p, q}^{P,Q}: L^{p^{\prime}}\left(\mathbb{R}^N\right) \rightarrow L^q\left(\mathbb{R}^N\right), \quad \mathbf{K}_{p, q}^{P,Q}(v)=P^{\frac{1}{p}} \mathbf{R}\left(Q^{\frac{1}{q}} v\right) .
$$
In addition, the following propositions hold:
\begin{itemize}
\item[$(a)$]  $\int_{\mathbb{R}^N} u \mathbf{K}_{p, q}^{P,Q}(v) dx=\int_{\mathbb{R}^N} v \mathbf{K}_{q, p}^{Q,P}(u) dx \text { for all } v \in L^{p^{\prime}}\left(\mathbb{R}^N\right), u \in L^{q^{\prime}}\left(\mathbb{R}^N\right)$ ;
\item[$(b)$] If $\frac{N-2}{N} <\frac{1}{p}+\frac{1}{q} \leq \frac{N-1}{N+1}$, then for any bounded and measurable set $B\subset\R^N$, the operator  $1_B \mathbf{K}_{p, q}^{P,Q}$ is compact. 
\end{itemize}
\end{Lem}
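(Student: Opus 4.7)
The plan is to treat the two parts separately. Part (a) is a duality identity that will follow from Fubini together with the radial (hence even) symmetry of the kernel $\Psi=\Re\Phi$ of $\mathbf{R}$. Part (b) will follow from interior elliptic regularity for $-\Delta-1$ combined with the Rellich--Kondrachov theorem; the strict inequality $\frac{1}{p}+\frac{1}{q}>\frac{N-2}{N}$ will enter precisely to guarantee subcriticality of the relevant Sobolev embedding into $L^q$.

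For (a), I would first fix $u,v\in C_c^\infty(\R^N)$ and unfold the convolution:
\begin{equation*}
\int_{\R^N} u\,\mathbf{K}_{p,q}^{P,Q}(v)\,dx=\iint_{\R^N\times\R^N} u(x)P(x)^{1/p}\Psi(x-y)Q(y)^{1/q}v(y)\,dy\,dx.
\end{equation*}
Since $\Phi(x)$ depends only on $|x|$, so does $\Psi$, giving $\Psi(x-y)=\Psi(y-x)$; local integrability of $\Psi$ near the origin (a mild $|x|^{2-N}$ singularity for $N\geq 3$) together with the compact supports of $u,v$ and $P,Q\in L^\infty$ legitimizes Fubini, yielding $\int v\,\mathbf{K}_{q,p}^{Q,P}(u)\,dy$. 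To extend from test functions to arbitrary $u\in L^{q'}$, $v\in L^{p'}$, I would observe that both sides are continuous bilinear forms: continuity follows from H\"older combined with the $L^{p'}\to L^q$ and $L^{q'}\to L^p$ boundedness of the generalized Birman--Schwinger operators already supplied by Guti\'errez's estimate, and density finishes the job.

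For (b), take a bounded sequence $\{v_n\}\subset L^{p'}(\R^N)$ and set $f_n=Q^{1/q}v_n\in L^{p'}$ and $w_n=\mathbf{R}f_n\in L^q(\R^N)$, where the latter bound is uniform in $n$ by Guti\'errez. The condition $\frac{1}{p}+\frac{1}{q}<\frac{N-1}{N+1}<1$ forces $q>p'$, so on any bounded open set $B'\supset\supset B$, H\"older converts the uniform $L^q$-bound on $w_n$ into a uniform $L^{p'}(B')$-bound. Then the distributional identity $-\Delta w_n-w_n=f_n$ together with the standard interior Calder\'on--Zygmund estimate yields $\|w_n\|_{W^{2,p'}(B)}\leq C$ uniformly in $n$. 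The Rellich--Kondrachov embedding $W^{2,p'}(B)\hookrightarrow L^q(B)$ is compact precisely when $\frac{1}{q}>\frac{1}{p'}-\frac{2}{N}$ (and trivially when $2p'\geq N$), which rearranges to $\frac{1}{p}+\frac{1}{q}>\frac{N-2}{N}$---the strict assumption of (b). Extracting a subsequence convergent in $L^q(B)$ and multiplying by $1_B P^{1/p}\in L^\infty$ produces a subsequence of $1_B\mathbf{K}_{p,q}^{P,Q}(v_n)$ convergent in $L^q(\R^N)$. The one genuinely delicate point is the bridge from Guti\'errez's \emph{global} $L^q$-bound on $w_n$ to a \emph{local} $L^{p'}$-bound needed to feed into the interior regularity estimate; after this bridge, the strict exponent inequality is exactly what Rellich--Kondrachov demands.
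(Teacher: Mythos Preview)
The paper does not actually prove this lemma; it is quoted verbatim from \cite[Lemma~2.3]{Ding-Wang2023JDE} with no argument given here. So there is nothing in the present paper to compare against, and your proposal supplies a complete self-contained proof.

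Your argument is correct. Part~(a) is exactly the standard Fubini-plus-evenness-of-$\Psi$ computation, and the density extension via the $L^{p'}\to L^q$ and $L^{q'}\to L^p$ bounds is the right way to close it. For part~(b), your route through interior Calder\'on--Zygmund regularity is sound: the chain
\[
w_n=\mathbf{R}(Q^{1/q}v_n)\in L^q(\R^N)\ \Longrightarrow\ w_n\in L^{p'}_{\loc}\ \text{(since $q>p'$)}\ \Longrightarrow\ w_n\in W^{2,p'}_{\loc}
\]
from the distributional identity $-\Delta w_n-w_n=Q^{1/q}v_n\in L^{p'}$, followed by the compact embedding $W^{2,p'}(B)\hookrightarrow L^q(B)$ under $\frac{1}{p}+\frac{1}{q}>\frac{N-2}{N}$, is exactly right, and your identification of the ``bridge'' step (global $L^q$ to local $L^{p'}$) as the only nontrivial transition is accurate. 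For comparison, the approach in the cited reference and its scalar predecessor \cite{Evequoz-Weth2015AM} works instead through the explicit kernel decomposition $\Psi=\Psi_1+\Psi_2$ (the same splitting used later in Lemma~\ref{Lem3.1}), treating each piece separately via pointwise bounds; your PDE-regularity argument is arguably cleaner in that it avoids the kernel analysis entirely and uses only the global mapping property of $\mathbf{R}$ as a black box.
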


For the sake of simplicity, we define a bounded linear operator $\mathbf{K}_\varepsilon: L^{q^{\prime}}\left(\mathbb{R}^N\right)\times L^{p^{\prime}}\left(\mathbb{R}^N\right)\rightarrow L^{q}\left(\mathbb{R}^N\right)\times L^{p}\left(\mathbb{R}^N\right)$ and \eqref{eq2.4} can be written as
$$
\mathbf{K}_\varepsilon z:=\left(\begin{array}{cc}
0 & \mathbf{K}_{q, p}^{Q_\varepsilon, P_\varepsilon} \\
\mathbf{K}_{p, q}^{P_\varepsilon, Q_\varepsilon} & 0
\end{array}\right)\binom{\psi}{\phi}=\binom{|\psi|^{q^\prime-2}\psi}{|\phi|^{p^\prime-2}\phi},
$$
where $z=(\psi,\phi)^T$. 
Set $X=L^{q^\prime}(\R^N)\times L^{p^\prime}(\R^N)$ with the norm
$$\|z\|=(\|\psi\|^2_{q^\prime}+\|\phi\|^2_{p^\prime})^{\frac{1}{2}},\ z=(\psi,\phi)^T\in X,$$
and it is easy to show that $(X,\|\cdot\|)$ is a reflexive Banach space. The dual space of $X$ is given by $X^\ast=L^{q}(\R^N)\times L^{p}(\R^N)$.
Define the $C^1$-functional $J_\varepsilon : X\rightarrow \R$ by
\begin{equation}\label{eq2.5}
J_\varepsilon(z)=\frac{1}{q^\prime}\int_{\R^N}|\psi|^{q^\prime}dx +\frac{1}{p^\prime}\int_{\R^N}|\phi|^{p^\prime}dx-\frac{1}{2}\int_{\R^N}z^T\mathbf{K}_\varepsilon zdx.\end{equation}
Note that if we find the nontrivial critical point $z=(\psi,\phi)^T$ of $J_\varepsilon$, then the following correspondences
\begin{equation}\label{eq2.6}
u=\mathbf{R}(P_\varepsilon^{\frac{1}{p}}\phi),\quad v=\mathbf{R}(Q_\varepsilon^{\frac{1}{q}}\phi)
\end{equation} implies that $(u,v)$ is indeed a solution of the following system
\begin{equation}\label{eq2.7}
\left\{\begin{array}{l}
u=\mathbf{R}(P_\varepsilon|v|^{p-2}v), \\
v=\mathbf{R}(Q_\varepsilon|u|^{q-2}u).
\end{array} \right.
\end{equation}
In addition, since $\mathbf{R}$ is a right inverse for the Helmholtz operator $-\Delta-1$ and the regularity result of \cite[Proposition A.1]{Ding-Wang2023JDE}, we know that $(u,v)\in W_{loc}^{2,p^\prime}(\R^N)\times W_{loc}^{2,q^\prime}(\R^N)$ is a pair of strong solution for the system \eqref{eq2.2}. 
\begin{Lem}\textup{(Mountain Pass geometry\cite[Lemma 2.5]{Ding-Wang2023JDE})}\label{Lem2.2}
Let $p, q>\frac{2 N}{N-1}$ with $\frac{N-2}{N}<\frac{1}{p}+\frac{1}{q} \leq \frac{N-1}{N+1}$.  If (PQ1) holds, then we have 
\begin{itemize}
\item[$(a)$]  there exists $\delta > 0$ and $0<\rho<1$ such that $J_\varepsilon(z) \geq \delta>0$ for all $z \in X$ with $\|z\|=\rho$;
\item[$(b)$]  there exists $z_0 \in X$ such that $\left\|z_0\right\|>1$ and $J_\varepsilon\left(z_0\right)<0$.
\end{itemize}
\end{Lem}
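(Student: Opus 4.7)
My plan hinges on the structural observation that $p,q>2N/(N-1)>2$ gives $p',q'<2$, so that the two norm-power terms in $J_\varepsilon$ are \emph{subquadratic} while the coupling term is genuinely quadratic in $z$. This reversed power ordering (compared with a classical superlinear problem) is precisely what makes both the small-sphere positivity (a) and the large-norm negativity (b) work. Throughout, I would control the coupling by combining Lemma 2.1(a), which gives
\[
\int_{\R^N} z^T \mathbf{K}_\varepsilon z \, dx = 2\int_{\R^N} \psi \, \mathbf{K}_{q,p}^{Q_\varepsilon, P_\varepsilon}(\phi) \, dx,
\]
with H\"older's inequality and the Guti\'errez $L^{p'}\!\to\! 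L^q$ estimate for $\mathbf{R}$ (valid since $(N-2)/N<1/p+1/q\le(N-1)/(N+1)$), together with the $L^\infty$ bounds on $P,Q$ from $(PQ_1)$, to obtain $\bigl|\int z^T \mathbf{K}_\varepsilon z\,dx\bigr| \le C\|\psi\|_{q'}\|\phi\|_{p'} \le C\|z\|^2$.

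For (a), on the sphere $\|z\|=\rho<1$ at least one of $\|\psi\|_{q'},\|\phi\|_{p'}$ is $\ge \rho/\sqrt 2$. Say $\|\psi\|_{q'}\ge \rho/\sqrt 2$ (the other case is symmetric); dropping the nonnegative $\|\phi\|_{p'}^{p'}$ term gives
\[
J_\varepsilon(z) \ge \frac{1}{q'}\bigl(\rho/\sqrt 2\bigr)^{q'} - \frac{C}{2}\rho^2,
\]
and since $q'<2$ the right-hand side is bounded below by some $\delta>0$ once $\rho$ is chosen small.

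For (b), I would produce a pair $z_0=(\psi_0,\phi_0)\in X$ with $I_0 := \int_{\R^N} z_0^T \mathbf{K}_\varepsilon z_0\, dx > 0$; along the ray $\{tz_0:t>0\}$ one then has
\[
J_\varepsilon(tz_0) = \frac{t^{q'}}{q'}\|\psi_0\|_{q'}^{q'} + \frac{t^{p'}}{p'}\|\phi_0\|_{p'}^{p'} - \frac{t^2}{2}I_0,
\]
and because $p',q'<2$ the $-t^2 I_0/2$ term dominates as $t\to\infty$, yielding $\|tz_0\|>1$ and $J_\varepsilon(tz_0)<0$ for $t$ large. The delicate step is producing $z_0$ with $I_0>0$: the operator $\mathbf{R}$ has Fourier symbol $\mathrm{p.v.}(|\xi|^2-1)^{-1}$, which changes sign across the resonant sphere $|\xi|=1$, so positivity of the coupling is not automatic. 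My plan is to fix a Schwartz function $f$ with $\hat f$ real and supported in an annulus $\{1+\eta\le|\xi|\le R\}$ where the symbol is uniformly positive, and set $\phi_0=P_\varepsilon^{-1/p}f$ and $\psi_0=Q_\varepsilon^{-1/q}f$ (both well-defined thanks to $P,Q\ge\alpha>0$ in $(PQ_1)$); then
\[
\int_{\R^N} \psi_0\, \mathbf{K}_{q,p}^{Q_\varepsilon,P_\varepsilon}(\phi_0)\,dx = \int_{\R^N} f\,\mathbf{R}(f)\,dx = \int_{\R^N} \frac{|\hat f(\xi)|^2}{|\xi|^2-1}\,d\xi > 0,
\]
which is the required $I_0$. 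Verifying this positivity together with the $L^{q'}\!\times\! L^{p'}$ integrability of $(\psi_0,\phi_0)$ is the main technical obstacle.
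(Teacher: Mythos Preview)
Your argument is correct in substance. Note that the paper does not supply its own proof of this lemma; it simply cites \cite[Lemma~2.5]{Ding-Wang2023JDE}, so there is no internal proof to compare against beyond the structural set-up (the functional \eqref{eq2.5}, the boundedness of $\mathbf{K}_\varepsilon$ from Lemma~\ref{Lem2.1}, and the set $U_\varepsilon^+$). Your plan is exactly the natural one in this framework: the key structural point---that $p',q'<2$ makes the norm terms subquadratic while the coupling is quadratic---is identified correctly, and your bound $\bigl|\int z^T\mathbf{K}_\varepsilon z\,dx\bigr|\le C\|\psi\|_{q'}\|\phi\|_{p'}$ with $C=2C(N,p,q)\|P\|_\infty^{1/p}\|Q\|_\infty^{1/q}$ (independent of $\varepsilon$) is precisely what the Guti\'errez estimate combined with $(PQ_1)$ gives. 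Part~(a) then follows as you wrote; taking the minimum of the two lower bounds coming from the cases $\|\psi\|_{q'}\ge\rho/\sqrt2$ and $\|\phi\|_{p'}\ge\rho/\sqrt2$ handles both symmetrically.

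One small technical point on part~(b): since the dual variational framework is set up for \emph{real}-valued functions (the operator $\mathbf{R}$ is the real part of the resolvent, and $\psi_0,\phi_0$ must lie in the real spaces $L^{q'}(\R^N)\times L^{p'}(\R^N)$), you need $f$ itself to be real. Requiring $\hat f$ to be merely real is not enough; you should take $\hat f$ real \emph{and even} (for instance radial), which guarantees $f=\overline f$. With that adjustment your Plancherel computation
\[
\int_{\R^N} f\,\mathbf{R} f\,dx=\int_{\R^N}\frac{|\hat f(\xi)|^2}{|\xi|^2-1}\,d\xi>0
\]
is valid (the principal value disappears since $\hat f$ is supported in $\{|\xi|\ge 1+\eta\}$), and the substitution $\psi_0=Q_\varepsilon^{-1/q}f$, $\phi_0=P_\varepsilon^{-1/p}f$ is legitimate because $(PQ_1)$ gives $0<\alpha\le P,Q\le\|P\|_\infty,\|Q\|_\infty$, so these weights are bounded above and below and $\psi_0\in L^{q'}$, $\phi_0\in L^{p'}$. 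The ray argument $J_\varepsilon(tz_0)\to-\infty$ as $t\to\infty$ then goes through exactly as you describe.
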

As a consequence of Lemma \ref{Lem2.2}, we know that the Nehari set 
$$\mathcal{N}_{\varepsilon}:=\{z \in X \setminus\{0\}: \langle J_{\varepsilon}^{\prime}(z), z \rangle
=0\}\neq\emptyset,$$
where the set $\mathcal{N}_{\varepsilon}$ contains all nontrivial critical points of $J_\varepsilon$. Here we would like to point out that $\mathcal{N}_{\varepsilon}$ is a smooth manifold. In fact, let $g(z)=\langle J_{\varepsilon}^{\prime}(z), z \rangle$, for $z=(\psi,\phi)^T\in \mathcal{N}_{\varepsilon}$. Then 
$$\langle g^{\prime}(z), z \rangle=q^\prime\int_{\mathbb{R}^N}|\psi|^{q^\prime } dx+p^\prime\int_{\mathbb{R}^N}|\phi|^{p^\prime} dx-2\int_{\mathbb{R}^N}z^T\mathbf{K}_\varepsilon z dx.$$
Using the characterization of $\mathcal{N}_{\varepsilon}$, we get
$$\langle g^{\prime}(z), z \rangle=(q^\prime-2)\int_{\mathbb{R}^N}|\psi|^{q^\prime } dx+(p^\prime-2)\int_{\mathbb{R}^N}|\phi|^{p^\prime} dx.$$
Because $z\neq0$ and $1<q^\prime,p^\prime<2$, we know that $\langle g^{\prime}(z), z \rangle<0$ for all $z\in\mathcal{N}_{\varepsilon}$, it follows from the implicit function theorem
that $\mathcal{N}_{\varepsilon}$ is a $C^1$-manifold of codimension 1 and $X=T_z(\mathcal{N}_\varepsilon)\oplus\mathbb{R}z$ for each $z\in\mathcal{N}_\varepsilon$, where $T_z(\mathcal{N}_\varepsilon)=\{w\in X: \langle g^\prime(z),w \rangle=0
\}$.
\par
In addition, for each $z\in U_\varepsilon^+:=\left\{z \in X: \int_{\mathbb{R}^N} z^T \mathbf{K}_\varepsilon z dx>0\right\}$ there exists a unique $t_z>0$ such that $t_z z\in\mathcal{N}_{\varepsilon}$. According to the existing results about the Nehari set $\mathcal{N}_{\varepsilon}$ obtained in \cite[Lemma 2.13 and 2.14]{Ding-Wang2023JDE}, it follows that 
$$c_\varepsilon:=\inf\limits_{z\in \mathcal{N}_{\varepsilon}}J_\varepsilon(z)=\inf\limits_{z\in U_\varepsilon^+}J_\varepsilon(t_zz)>0. $$
Moreover, $c_\varepsilon$ coincides with the mountain pass level, i.e., $c_\varepsilon=\inf\limits_{\gamma \in \Gamma} \max\limits_{t \in[0,1]} J_\varepsilon(\gamma(t))$, where $\Gamma=\{\gamma \in C([0,1], X): \gamma(0)=0$ and $J_\varepsilon(\gamma(1))<0\}$.
\par
We recall that $\{z_n\}=\{(\psi_n,\phi_n)^T\}\subset X$ is called a (PS)$_c$-sequence for $J_\varepsilon$ if $J_\varepsilon(z_n)\rightarrow c$ and $J^\prime_\varepsilon(z_n)\rightarrow 0$ as $n\rightarrow\infty$, and we say the functional $J_\varepsilon$ satisfies the (PS)$_c$-condition if any (PS)$_c$-sequence has a convergent subsequence. For the convenience of subsequent discussion, we now present some properties of the (PS)$_c$-sequence of $J_\varepsilon$ at $c>0$.
\begin{Lem}\label{Lem2.2+}
Let $p, q>\frac{2 N}{N-1} \text { with } \frac{N-2}{N}<\frac{1}{p}+\frac{1}{q} <\frac{N-1}{N+1}$ and $\{z_n\}=\{(\psi_n,\phi_n)^T\}\subset X$ be a (PS)$_c$-sequence for $J_\varepsilon$ at $c>0$, then $\{z_n\}$ is bounded in $X$ and there exists $z\in X$ such that, up to a subsequence, $z_n\rightharpoonup z$ weakly in $X$ and $J^\prime_\varepsilon(z)=0$. In addition, we have $J_\varepsilon(z)\leq\liminf\limits_{n\rightarrow\infty}J_\varepsilon(z_n)$.
\end{Lem}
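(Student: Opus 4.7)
The plan is to prove the three claims (boundedness, existence of a weak limit critical point, weak lower semicontinuity of the energy) in order, relying mainly on (i) the subquadratic exponents $1<p',q'<2$, (ii) reflexivity of $X$, and (iii) the local compactness of $\mathbf{K}_{p,q}^{P_\varepsilon,Q_\varepsilon}$ supplied by Lemma \ref{Lem2.1}(b).

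\textbf{Boundedness.} I will use the standard combination
\[
J_\varepsilon(z_n)-\tfrac12\langle J_\varepsilon'(z_n),z_n\rangle
=\Bigl(\tfrac{1}{q'}-\tfrac12\Bigr)\|\psi_n\|_{q'}^{q'}+\Bigl(\tfrac{1}{p'}-\tfrac12\Bigr)\|\phi_n\|_{p'}^{p'},
\]
where the quadratic form $\int z_n^T\mathbf{K}_\varepsilon z_n$ drops out exactly. Since $1<p',q'<2$, both coefficients on the right are strictly positive. Bounding the left by $C+\epsilon_n\|z_n\|$ with $\epsilon_n=\|J_\varepsilon'(z_n)\|_{X^*}\to 0$ and using $\|z_n\|\le \|\psi_n\|_{q'}+\|\phi_n\|_{p'}$, a short comparison (argue on $\max(\|\psi_n\|_{q'},\|\phi_n\|_{p'})$ with exponent $\min(p',q')>1$) yields $\|z_n\|$ bounded. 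Reflexivity of $X=L^{q'}\times L^{p'}$ then gives a subsequence with $z_n\rightharpoonup z$.

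\textbf{Identification of $z$ as a critical point.} This is the main obstacle: the operator $\mathbf{K}_\varepsilon$ is not globally compact, so weak convergence does not pass directly through the nonlinear duality maps $\psi\mapsto|\psi|^{q'-2}\psi$ and $\phi\mapsto|\phi|^{p'-2}\phi$. The key identity, read in $X^*$, is
\[
\bigl(|\psi_n|^{q'-2}\psi_n,\,|\phi_n|^{p'-2}\phi_n\bigr)=\mathbf{K}_\varepsilon z_n+o(1)\quad\text{in }L^q\times L^p.
\]
I would fix an arbitrary ball $B_R\subset\R^N$ and multiply by $1_{B_R}$. By Lemma \ref{Lem2.1}(b), the operators $1_{B_R}\mathbf{K}_{q,p}^{Q_\varepsilon,P_\varepsilon}$ and $1_{B_R}\mathbf{K}_{p,q}^{P_\varepsilon,Q_\varepsilon}$ are compact; combined with the weak convergence $z_n\rightharpoonup z$, this gives $1_{B_R}\mathbf{K}_\varepsilon z_n\to 1_{B_R}\mathbf{K}_\varepsilon z$ strongly in $L^q(B_R)\times L^p(B_R)$. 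Hence $1_{B_R}\bigl(|\psi_n|^{q'-2}\psi_n,|\phi_n|^{p'-2}\phi_n\bigr)$ converges strongly as well. Passing to a sub-subsequence that converges a.e. on $B_R$ and applying the fact that $\tau\mapsto|\tau|^{q'-2}\tau$ and $\tau\mapsto|\tau|^{p'-2}\tau$ are homeomorphisms of $\R$, I recover a.e. convergence of $\psi_n$ and $\phi_n$ on $B_R$. A diagonal argument over $R\to\infty$ yields pointwise a.e. convergence on $\R^N$, and the a.e. limit must coincide with the weak limit $(\psi,\phi)$ by uniqueness. Now for any $w=(\psi_w,\phi_w)\in X$ the linear term $\int z_n^T\mathbf{K}_\varepsilon w\to\int z^T\mathbf{K}_\varepsilon w$ by weak convergence (since $\mathbf{K}_\varepsilon w\in X^*$), while for the nonlinear terms the boundedness of $|\psi_n|^{q'-2}\psi_n$ in $L^q$ together with a.e. convergence gives weak convergence in $L^q$ (by the classical fact that bounded sequences converging a.e. converge weakly to the a.e. limit). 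Sending $n\to\infty$ in $\langle J_\varepsilon'(z_n),w\rangle\to 0$ produces $\langle J_\varepsilon'(z),w\rangle=0$.

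\textbf{Weak lower semicontinuity of $J_\varepsilon$.} Using $\langle J_\varepsilon'(z_n),z_n\rangle=o(1)$ to eliminate the quadratic term exactly as in Step 1, and using $\langle J_\varepsilon'(z),z\rangle=0$ for the limit, both energies reduce to a sum of positive multiples of $\|\psi\|_{q'}^{q'}$ and $\|\phi\|_{p'}^{p'}$:
\[
J_\varepsilon(z_n)=\Bigl(\tfrac{1}{q'}-\tfrac12\Bigr)\|\psi_n\|_{q'}^{q'}+\Bigl(\tfrac{1}{p'}-\tfrac12\Bigr)\|\phi_n\|_{p'}^{p'}+o(1),\qquad J_\varepsilon(z)=\Bigl(\tfrac{1}{q'}-\tfrac12\Bigr)\|\psi\|_{q'}^{q'}+\Bigl(\tfrac{1}{p'}-\tfrac12\Bigr)\|\phi\|_{p'}^{p'}.
\]
Weak lower semicontinuity of the $L^{q'}$ and $L^{p'}$ norms (with positive coefficients) immediately yields $J_\varepsilon(z)\le\liminf_{n\to\infty}J_\varepsilon(z_n)$, completing the proof.
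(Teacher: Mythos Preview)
Your proof is correct and follows essentially the same approach as the paper: the paper simply cites \cite[Lemma 2.6(a),(b)]{Ding-Wang2023JDE} for the boundedness and the identification of the weak limit as a critical point, while you spell out those arguments explicitly (using the same identity $J_\varepsilon-\tfrac12\langle J_\varepsilon',\cdot\rangle$ and the local compactness of Lemma~\ref{Lem2.1}(b)); the final weak lower semicontinuity step is identical to the paper's.
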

\begin{proof}
The boundedness of $\{z_n\}$ can be derived from \cite[Lemma 2.6 (a)]{Ding-Wang2023JDE}. Hence, up to subsequence, we may assume that $z_n\rightharpoonup z$ weakly in $X$ for some $z:=(\psi,\phi)^T\in X$, by \cite[Lemma 2.6 (b)]{Ding-Wang2023JDE} we know that $J^\prime_\varepsilon(z)=0$. Combining this with the fact that the norm $\|\cdot\|_{p^\prime}$ and $\|\cdot\|_{q^\prime}$ are weakly lower sequentially continuous, we infer that
\begin{equation*}
\begin{split}
J_\varepsilon(z)&=J_\varepsilon(z)-\frac{1}{2}\langle J^\prime_\varepsilon(z),z \rangle\\
&=\bigg(\frac{1}{q^\prime}-\frac{1}{2}\bigg)\int_{\mathbb{R}^N}|\psi|^{q^\prime}dx+\bigg(\frac{1}{p^\prime}-\frac{1}{2}\bigg)\int_{\mathbb{R}^N}|\phi|^{p^\prime}dx\\
&\leq\liminf_{n\rightarrow\infty}\bigg[\bigg(\frac{1}{q^\prime}-\frac{1}{2}\bigg)\int_{\mathbb{R}^N}|\psi_n|^{q^\prime}dx+\bigg(\frac{1}{p^\prime}-\frac{1}{2}\bigg)\int_{\mathbb{R}^N}|\phi_n|^{p^\prime}dx\bigg]\\
&=\liminf_{n\rightarrow\infty}\bigg(J_\varepsilon(z_n)-\frac{1}{2}\langle J^\prime_\varepsilon(z_n),z_n \rangle\bigg)\\
&=\liminf_{n\rightarrow\infty}J_\varepsilon(z_n),
\end{split}
\end{equation*}
and this concludes the proof.
\end{proof}
\subsection{ Preliminaries}
At the beginning of this subsection, we first consider the following problem with constant coefficients
\begin{equation}\label{eq2.8}
\left\{\begin{array}{l}
|\psi|^{q^\prime-2}\psi=Q^{\frac{1}{q}}(s)\mathbf{R}(P^\frac{1}{p}(t)\phi), \\
|\phi|^{p^\prime-2}\phi=P^{\frac{1}{p}}(t)\mathbf{R}(Q^\frac{1}{q}(s)\psi),
\end{array} \right.
\end{equation}
where $(s,t)\in\R^N\times\R^N$ is regarded as parameters instead of independent variables.
The corresponding energy functional $J_{s,t}(z)$ given by 
$$J_{s,t}(z)=J_{s,t}(\psi,\phi)=\frac{1}{q^\prime}\int_{\R^N}|\psi|^{q^\prime}dx +\frac{1}{p^\prime}\int_{\R^N}|\phi|^{p^\prime}dx-\frac{1}{2}\int_{\R^N}z^T \mathbf{K}_{s,t} zdx,$$
where 
$$\mathbf{K}_{s,t} z=\left(\begin{array}{cc}
0 & \mathbf{K}_{q, p}^{Q(s), P(t)} \\
\mathbf{K}_{p, q}^{P(t), Q(s)} & 0
\end{array}\right)\binom{\psi}{\phi}.$$
 The Nehari set associated to $J_{s,t}$ is given by 
$$\mathcal{N}_{s,t}=\{z\in X\setminus\{0\}:\langle J_{s,t}^{\prime}(z),z \rangle
=0\},$$
and we define the ground energy level $c_{s,t}=\inf\limits_{z\in\mathcal{N}_{s,t} }J_{s,t}(z)$. Similarly, we know that $c_{s,t}=\inf\limits_{\gamma \in \Gamma} \max\limits_{t \in[0,1]} J_{s,t}(\gamma(t))>0$, where $\Gamma=\{\gamma \in C([0,1], X): \gamma(0)=0$ and $J_{s,t}(\gamma(1))<0\}$.
\begin{Lem}\label{Lem2.3}
Let $N\geq3$, $p, q>\frac{2 N}{N-1} \text { with } \frac{N-2}{N}<\frac{1}{p}+\frac{1}{q} <\frac{N-1}{N+1}$, then the system \eqref{eq2.8} admits a pair of nontrivial solution $(U_0,V_0)\in X$ such that $J_{s,t}(U_0,V_0)=c_{s,t}$. 
\end{Lem}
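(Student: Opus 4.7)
My plan is to obtain $(U_0,V_0)$ as the weak limit of a suitably translated $(PS)_{c_{s,t}}$-sequence for the constant-coefficient functional $J_{s,t}$. Since $P(t)>0$ and $Q(s)>0$ are positive constants, the hypothesis $(PQ_1)$ of Lemma \ref{Lem2.2} is trivially satisfied, so $J_{s,t}$ has the mountain pass geometry and the minimax level $c_{s,t}>0$ coincides with the Nehari infimum. A standard deformation lemma (either on $X$ or on the $C^1$-manifold $\mathcal{N}_{s,t}$) thus produces a $(PS)_{c_{s,t}}$-sequence $\{z_n\}=\{(\psi_n,\phi_n)^T\}\subset X$. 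Lemma \ref{Lem2.2+} shows that $\{z_n\}$ is bounded in $X$ and, up to a subsequence, converges weakly to some critical point $z_0$ of $J_{s,t}$; however, a priori $z_0$ could be zero.

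The crucial step is therefore to prevent vanishing. Because the coefficients of $\mathbf{K}_{s,t}$ do not depend on $x$, the functional $J_{s,t}$ is invariant under the translation action $z(\cdot)\mapsto z(\cdot-y)$ on $X$, so replacing $z_n$ by $\tilde z_n(\cdot):=z_n(\cdot+y_n)$ for any sequence $\{y_n\}\subset\R^N$ yields another bounded $(PS)_{c_{s,t}}$-sequence. The Nehari identity
\[
\int_{\R^N}|\psi_n|^{q^\prime}dx+\int_{\R^N}|\phi_n|^{p^\prime}dx=\int_{\R^N}z_n^T\mathbf{K}_{s,t}z_n\,dx,
\]
together with $c_{s,t}>0$, forces $\|z_n\|$ and the bilinear form to remain bounded below. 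I would then invoke the nonvanishing lemma for the generalized Birman-Schwinger operator (the counterpart of the Ev\'equoz-Weth nonvanishing theorem, established in \cite{Ding-Wang2023JDE}) to find $\{y_n\}\subset\R^N$, $R>0$ and $\eta>0$ such that
\[
\int_{B_R}\bigl(|\tilde\psi_n|^{q^\prime}+|\tilde\phi_n|^{p^\prime}\bigr)dx\geq\eta
\]
for all large $n$. Combined with the local compactness of $1_{B_R}\mathbf{K}_{p,q}^{P(t),Q(s)}$ from Lemma \ref{Lem2.1}$(b)$, this forces the weak limit $(U_0,V_0)$ of $\{\tilde z_n\}$ to be nontrivial.

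Lemma \ref{Lem2.2+} applied to $\{\tilde z_n\}$ then yields $J_{s,t}^{\prime}(U_0,V_0)=0$, so $(U_0,V_0)\in\mathcal{N}_{s,t}$ and in particular $J_{s,t}(U_0,V_0)\geq c_{s,t}$, while the lower-semicontinuity estimate in that same lemma provides the reverse inequality $J_{s,t}(U_0,V_0)\leq\liminf_n J_{s,t}(\tilde z_n)=c_{s,t}$; equality concludes the proof. The hardest step is the nonvanishing argument: because $\mathbf{R}$ is convolution with the real part of the Hankel-function kernel, which is oscillatory and does not decay monotonically, the classical Lions concentration-compactness principle cannot be applied directly, and one must instead rely on the Guti\'errez-type $L^\alpha$-$L^\beta$ estimates together with the dedicated nonvanishing result for the dual Helmholtz setting developed in \cite{Ding-Wang2023JDE}.
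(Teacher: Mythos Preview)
Your proposal is correct and follows essentially the same route as the paper. The paper's proof simply cites \cite[Theorem 1.2]{Ding-Wang2023JDE} for the existence of a nontrivial critical point $\bar z$ of $J_{s,t}$ (whose proof is precisely the translation-invariance plus nonvanishing argument you spell out), and then uses the sandwich $c_{s,t}\le J_{s,t}(\bar z)\le\liminf_n J_{s,t}(z_n)=c_{s,t}$ exactly as you do; your version merely unpacks the cited reference.
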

\begin{proof}
The proof of the existence of nontrivial critical point $\bar{z}=(U_0,V_0)\in \mathcal{N}_{s,t}$ can be found in \cite[Theorem 1.2]{Ding-Wang2023JDE}. Here, we only need to point out that the nontrivial critical point $\bar{z}$ is also a ground state solution of \eqref{eq2.8}. In fact, let $\{z_n\}\subset X$ is a $(PS)$-sequence for the functional $J_{s,t}$ at the mountain pass level $c_{s,t}$, we have $z_n\rightharpoonup \bar{z}$ weakly in $X$ and
$$c_{s,t}\leq J_{s,t}(\bar{z})\leq\liminf\limits_{n\rightarrow\infty}J_{s,t}(z_n)=c_{s,t}.$$
\end{proof}
\par
 To relate the solutions of  \eqref{eq2.4} to the set $M$, consider the limit problem
\begin{equation}\label{eq2.9}
\left\{\begin{array}{l}
|\psi|^{q^\prime-2}\psi=\bar{Q}^\frac{1}{q}\mathbf{R}(\bar{P}^\frac{1}{p}\phi), \\
|\phi|^{p^\prime-2}\phi=\bar{P}^\frac{1}{p}\mathbf{R}(\bar{Q}^\frac{1}{q}\psi).
\end{array} \right.
\end{equation}
Let $J_0$ be the energy functional of the system \eqref{eq2.9} and $\mathcal{N}_0$ be the associated Nehari set, i.e.,
$$J_0(z)=\frac{1}{q^\prime}\int_{\R^N}|\psi|^{q^\prime}dx +\frac{1}{p^\prime}\int_{\R^N}|\phi|^{p^\prime}dx-\frac{1}{2}\int_{\R^N}z^T \mathbf{K}_0 zdx$$
and 
$$\mathcal{N}_0=\{z\in X\setminus\{0\}:\langle J_0^{\prime}(z),z \rangle
=0\},\quad c_M=\inf_{z\in\mathcal{N}_0}J_0(z),$$
where 
$$\mathbf{K}_{0} z=\left(\begin{array}{cc}
0 & \mathbf{K}_{q, p}^{\bar{Q}, \bar{P})} \\
\mathbf{K}_{p, q}^{\bar{P}, \bar{Q}} & 0
\end{array}\right)\binom{\psi}{\phi}.$$
Let $\delta>0$ be fixed and $\eta$ be a smooth non-increasing function defined on $[0,+\infty)$, such that
$$\eta(t)= \begin{cases}1, & 0 \leq t \leq \frac{\delta}{2}, \\ 0, & t>\delta\end{cases}$$
with $0 \leq \eta(t) \leq 1,\left|\eta^{\prime}(t)\right| \leq c$. For any fixed $(x_0, x_0)\in M$ and $\varepsilon>0$, let us define 
\begin{equation}\label{eq++}
\left(\Psi_{\varepsilon, x_0}(x), \Phi_{\varepsilon, x_0}(x)\right):=\left(\eta\left(\left|\varepsilon x-x_0\right|\right) U\left(x-\varepsilon^{-1}x_0\right), \eta\left(\left|\varepsilon x-x_0\right|\right) V\left(x-\varepsilon^{-1}x_0\right)\right),
\end{equation}
where $z_0:=(U,V)\in X$, whose existence is guaranteed by Lemma \ref{Lem2.3}, is some fixed least-energy critical point of $J_0$.
\begin{Lem}\label{Lem2.4}
 There exists $t_{\varepsilon,x_0}>0$ such that 
$$\left(\psi_{\varepsilon, x_0}, \phi_{\varepsilon,x_0}\right)=\left(t_{\varepsilon, x_0} \bigg(\frac{Q(x_0)}{Q_\varepsilon}\bigg)^{\frac{1}{q}}\Psi_{\varepsilon, {x_0}}, t_{\varepsilon, x_0} \bigg(\frac{P(x_0)}{P_\varepsilon}\bigg)^{\frac{1}{p}}\Phi_{\varepsilon, {x_0}}\right) \in \mathcal{N}_\varepsilon$$
for $\varepsilon>0$ small.
\end{Lem}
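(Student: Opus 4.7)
The plan is to reduce the statement to checking that the \emph{unscaled} pair
\[
\tilde z_\varepsilon := \left(\bigl(Q(x_0)/Q_\varepsilon\bigr)^{1/q}\Psi_{\varepsilon,x_0},\ \bigl(P(x_0)/P_\varepsilon\bigr)^{1/p}\Phi_{\varepsilon,x_0}\right)
\]
belongs to $U_\varepsilon^+$. Once this is done, the general fact recalled just before the lemma, namely that every element of $U_\varepsilon^+$ admits a unique positive rescaling into $\mathcal N_\varepsilon$, directly supplies the desired $t_{\varepsilon,x_0}>0$ and the membership $t_{\varepsilon,x_0}\tilde z_\varepsilon\in\mathcal N_\varepsilon$ is exactly the claim.

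To verify $\tilde z_\varepsilon\in U_\varepsilon^+$, I would first compute the quadratic form. Using the symmetry in Lemma \ref{Lem2.1}(a),
\[
\int_{\R^N}\tilde z_\varepsilon^{T}\mathbf K_\varepsilon \tilde z_\varepsilon\,dx \;=\; 2\int_{\R^N}\tilde\psi_\varepsilon\,Q_\varepsilon^{1/q}\,\mathbf R\bigl(P_\varepsilon^{1/p}\tilde\phi_\varepsilon\bigr)\,dx.
\]
The weights in the definition of $\tilde z_\varepsilon$ are tailored so that the pointwise factors $Q_\varepsilon^{1/q}$ and $P_\varepsilon^{1/p}$ cancel, leaving the constants $Q(x_0)^{1/q}=\bar Q^{1/q}$ and $P(x_0)^{1/p}=\bar P^{1/p}$ (since $(x_0,x_0)\in M$ forces $x_0\in M_P\cap M_Q$). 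After translating $y=x-\varepsilon^{-1}x_0$ and exploiting translation invariance of $\mathbf R$ (it is the convolution operator $\Psi\ast\cdot$), the integral reduces to
\[
2\bar Q^{1/q}\bar P^{1/p}\int_{\R^N}\eta(\varepsilon|y|)U(y)\,\mathbf R\bigl(\eta(\varepsilon|\cdot|)V\bigr)(y)\,dy.
\]

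Next I let $\varepsilon\to 0$. Since $V\in L^{p'}(\R^N)$, dominated convergence gives $\eta(\varepsilon|\cdot|)V\to V$ in $L^{p'}$, and the $L^{p'}$--$L^{p}$ boundedness of $\mathbf R$ (Guti\'errez's estimate, valid in the stated range of $p,q$) then yields $\mathbf R(\eta(\varepsilon|\cdot|)V)\to\mathbf R(V)$ in $L^{p}$; similarly $\eta(\varepsilon|\cdot|)U\to U$ in $L^{q'}$. A H\"older argument passes the integral to $\int U\,\mathbf R(V)\,dy$. Since $(U,V)=z_0\in\mathcal N_0$, the Nehari identity gives
\[
2\bar Q^{1/q}\bar P^{1/p}\int_{\R^N}U\,\mathbf R(V)\,dy \;=\; \int_{\R^N}z_0^{T}\mathbf K_0 z_0\,dx \;=\; \|U\|_{q'}^{q'}+\|V\|_{p'}^{p'} \;>\;0.
\]
Hence for all sufficiently small $\varepsilon>0$ the quadratic form on $\tilde z_\varepsilon$ is strictly positive, placing $\tilde z_\varepsilon\in U_\varepsilon^+$ and completing the proof.

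The only delicate point is the limit computation in the quadratic form. It amounts to dominated convergence for the cutoff together with continuity of $\mathbf R:L^{p'}(\R^N)\to L^{p}(\R^N)$; no compactness, no concentration argument, and no refined estimate are required beyond what is already established in Section~2.1. The scaling factors $(Q(x_0)/Q_\varepsilon)^{1/q}$ and $(P(x_0)/P_\varepsilon)^{1/p}$ are precisely what makes this limit converge to the \emph{constant coefficient} quadratic form of $z_0$, which is why the construction works only at points of $M_P\cap M_Q$.
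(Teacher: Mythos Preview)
Your approach is essentially identical to the paper's: reduce to showing $\tilde z_\varepsilon\in U_\varepsilon^+$, cancel the variable weights against $Q_\varepsilon^{1/q},P_\varepsilon^{1/p}$, translate, and pass to the limit using continuity of $\mathbf R$ together with the Nehari identity for $(U,V)$. One correction is needed, however: the boundedness you invoke, $\mathbf R:L^{p'}\to L^{p}$, is \emph{not} guaranteed in the paper's exponent range (it would require $p\ge \tfrac{2(N+1)}{N-1}$, which does not follow from $p>\tfrac{2N}{N-1}$ and $\tfrac1p+\tfrac1q<\tfrac{N-1}{N+1}$ alone). The correct continuity here is $\mathbf R:L^{p'}\to L^{q}$, i.e.\ the boundedness of $\mathbf K_{p,q}^{P,Q}$ from Lemma~\ref{Lem2.1}; pairing $\mathbf R(\eta V)\in L^{q}$ with $\eta U\in L^{q'}$ then gives the limit exactly as you wrote. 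The paper also records that the limit is uniform in $(x_0,x_0)\in M$ by compactness of $M$, a point you did not state but which is needed later in Lemma~\ref{Lem2.5}.
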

\begin{proof}
It suffices to show that $\bar{z}_{\varepsilon,x_0}:=(\bar{\psi}_{\varepsilon,x_0},\bar{\phi}_{\varepsilon,x_0})=\bigg(\bigg(\displaystyle\frac{Q(x_0)}{Q_\varepsilon}\bigg)^{\frac{1}{q}}\Psi_{\varepsilon, {x_0}},\bigg(\frac{P(x_0)}{P_\varepsilon}\bigg)^{\frac{1}{p}}\Phi_{\varepsilon, {x_0}}\bigg)\in U_\varepsilon^+$. We start by remarking that 
\begin{equation}\label{eq2.14}
0<\|U\|^{q^{\prime}}_{q^{\prime}}+\|V\|^{p^{\prime}}_{p^{\prime}}=2\int_{\mathbb{R}^N} Q^{\frac{1}{q}}(x_0)U \mathbf{R}(P^{\frac{1}{p}}(x_0)V) dx,
\end{equation}
since $(U,V)$ is a nontrivial ground state solution of the system \eqref{eq2.9}. 
By the definition of $\bar{z}_{\varepsilon,x_0}$ and \eqref{eq2.14}, we know that 
\begin{equation}\label{eq2.15}
\begin{split}
\int_{\R^N}\bar{z}_{\varepsilon,x_0}^T \mathbf{K}_{\varepsilon} \bar{z}_{\varepsilon,x_0} dx&=2\int_{\mathbb{R}^N}Q^{\frac{1}{q}}(x_0)\Psi_{\varepsilon, x_0}\mathbf{R}(P^{\frac{1}{p}}(x_0)\Phi_{\varepsilon, x_0})dx\\
&=2 \int_{\mathbb{R}^N} Q^{\frac{1}{q}}(x_0) \eta(|\varepsilon x|)U(x)\mathbf{R}(P^{\frac{1}{p}}(x_0) \eta(|\varepsilon x|)V(x))dx\\
&\rightarrow 2\int_{\mathbb{R}^N} Q^{\frac{1}{q}}(x_0)U \mathbf{R}(P^{\frac{1}{p}}(x_0)V) dx>0\quad\text{as }\ \varepsilon\rightarrow0^+,
\end{split}
\end{equation}
where the last limit holds as a consequence of $ Q^{\frac{1}{q}}(x_0)\eta(|\varepsilon x|)U(x)\rightarrow Q^{\frac{1}{q}}(x_0)U(x)$ in $L^{q^\prime}(\mathbb{R}^N)$ and $P^{\frac{1}{p}}(x_0)\eta(|\varepsilon x|)V(x)\rightarrow P^{\frac{1}{p}}(x_0)V(x)$ in $L^{p^\prime}(\mathbb{R}^N)$ as $\varepsilon\rightarrow0^+$, together with the continuity of the linear integral operator $\mathbf{R}$.
Moreover, the limit is uniform in $(x_0,x_0)\in M$, because $M$ is a compact set. Therefore, $\bar{z}_{\varepsilon,x_0}\in U_\varepsilon^+$ for all $(x_0,x_0)\in M $ and $\varepsilon>0$ small enough. 
\hfill
\end{proof}
\begin{Lem}\label{Lem2.5}
$$\lim _{\varepsilon \rightarrow 0^{+}} J_{\varepsilon}(\psi_{\varepsilon, x_0}, \phi_{\varepsilon,x_0})=\lim _{\varepsilon \rightarrow 0^{+}} J_{\varepsilon}(t_{\varepsilon, x_0} \bar{z}_{\varepsilon,x_0})=c_M \text {, uniformly for }(x_0,x_0) \in M .$$
\end{Lem}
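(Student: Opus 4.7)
Observe first that by the definition in Lemma~\ref{Lem2.4} we have $(\psi_{\varepsilon,x_0},\phi_{\varepsilon,x_0})=t_{\varepsilon,x_0}\bar z_{\varepsilon,x_0}$, so the two limits in the statement coincide and only one computation is needed. I would translate by $y=x-\varepsilon^{-1}x_0$: this recenters $\Psi_{\varepsilon,x_0},\Phi_{\varepsilon,x_0}$ at the origin as $\eta(|\varepsilon \cdot|)U$ and $\eta(|\varepsilon \cdot|)V$ and turns the coefficients into $P(\varepsilon y+x_0)$, $Q(\varepsilon y+x_0)$, which tend pointwise to $\bar P,\bar Q$ since $x_0\in M_P\cap M_Q$. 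Because $M$ is compact by $(PQ_1)$--$(PQ_2)$, these convergences are uniform in $x_0\in M$ on each compact $y$-set, and $P,Q$ remain uniformly bounded (above and, by $\alpha>0$, below) on a fixed neighborhood of $M$.

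The main analytic step is then to establish, uniformly in $x_0\in M$,
\[
\|\bar\psi_{\varepsilon,x_0}\|_{q'}^{q'}\to \|U\|_{q'}^{q'},\qquad \|\bar\phi_{\varepsilon,x_0}\|_{p'}^{p'}\to \|V\|_{p'}^{p'},\qquad \int_{\R^N}\bar z_{\varepsilon,x_0}^{T}\mathbf K_{\varepsilon}\bar z_{\varepsilon,x_0}\,dx\to \|U\|_{q'}^{q'}+\|V\|_{p'}^{p'}.
\]
The first two follow from dominated convergence after the above change of variable, with dominants $C|U|^{q'}$ and $C|V|^{p'}$ (the ratios $Q(x_0)/Q_\varepsilon$ and $P(x_0)/P_\varepsilon$ are uniformly controlled thanks to $\alpha\leq P,Q\leq\|P\|_\infty,\|Q\|_\infty$). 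The bilinear convergence is exactly the argument already carried out in \eqref{eq2.15}, producing the limit $2\int_{\R^N}\bar Q^{1/q}U\,\mathbf R(\bar P^{1/p}V)\,dy$; to rewrite this as $\|U\|_{q'}^{q'}+\|V\|_{p'}^{p'}$ I would invoke the Nehari identity $\langle J_0'(U,V),(U,V)\rangle=0$ satisfied by the ground state $(U,V)\in\mathcal N_0$.

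Next I would show $t_{\varepsilon,x_0}\to 1$ uniformly in $x_0\in M$. The condition $t\bar z_{\varepsilon,x_0}\in\mathcal N_\varepsilon$ is equivalent to
\[
t^{q'-2}A_\varepsilon(x_0)+t^{p'-2}B_\varepsilon(x_0)=C_\varepsilon(x_0),
\]
where $A_\varepsilon,B_\varepsilon,C_\varepsilon$ are the three quantities appearing in the previous display. Since $1<q',p'<2$, the left-hand side is strictly decreasing from $+\infty$ to $0$ as $t$ ranges over $(0,\infty)$, so the positive root is unique and depends continuously on $(A_\varepsilon,B_\varepsilon,C_\varepsilon)$. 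The limit coefficients satisfy $A_0+B_0=C_0$, so the limit equation has unique root $t=1$, and the uniform coefficient convergence forces $t_{\varepsilon,x_0}\to 1$ uniformly for $(x_0,x_0)\in M$.

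Substituting into the functional gives, uniformly in $x_0\in M$,
\[
J_\varepsilon(t_{\varepsilon,x_0}\bar z_{\varepsilon,x_0})=\frac{t_{\varepsilon,x_0}^{q'}}{q'}A_\varepsilon+\frac{t_{\varepsilon,x_0}^{p'}}{p'}B_\varepsilon-\frac{t_{\varepsilon,x_0}^{2}}{2}C_\varepsilon\longrightarrow\frac{1}{q'}\|U\|_{q'}^{q'}+\frac{1}{p'}\|V\|_{p'}^{p'}-\frac{1}{2}\bigl(\|U\|_{q'}^{q'}+\|V\|_{p'}^{p'}\bigr)=J_0(U,V)=c_M.
\]
The one genuine subtlety is maintaining uniformity in $x_0\in M$ throughout the cut-off integrals; it is handled by compactness of $M$ together with uniform continuity of $P,Q$ on a neighborhood of $M$, so the main difficulty is careful bookkeeping rather than any deep analysis.
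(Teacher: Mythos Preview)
Your proposal is correct and follows essentially the same route as the paper: establish the uniform convergences $\|\bar\psi_{\varepsilon,x_0}\|_{q'}^{q'}\to\|U\|_{q'}^{q'}$, $\|\bar\phi_{\varepsilon,x_0}\|_{p'}^{p'}\to\|V\|_{p'}^{p'}$ and the bilinear term via \eqref{eq2.15}, use the Nehari identity for $(U,V)$, deduce $t_{\varepsilon,x_0}\to1$ from the defining relation $t^{q'-2}A_\varepsilon+t^{p'-2}B_\varepsilon=C_\varepsilon$, and substitute. The only cosmetic differences are that the paper obtains $t_{\varepsilon,x_0}\to1$ by subtracting the two Nehari relations to get $(t^{q'-2}-1)A_\varepsilon+(t^{p'-2}-1)B_\varepsilon=o(1)$ rather than invoking continuous dependence of the implicit root, and in the last step it first simplifies $J_\varepsilon$ on $\mathcal N_\varepsilon$ to $\big(\tfrac{1}{q'}-\tfrac12\big)t^{q'}A_\varepsilon+\big(\tfrac{1}{p'}-\tfrac12\big)t^{p'}B_\varepsilon$ before passing to the limit.
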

\begin{proof}
Firstly, we prove that $t_{\varepsilon, x_0}\rightarrow1$ as $\varepsilon\rightarrow0^+$, uniformly for $(x_0,x_0)\in M$. Indeed, given any sequences $\varepsilon_n\rightarrow0$ as $n\rightarrow\infty$, and for any $(x_0,x_0)\in M$, we have
\begin{equation}\label{eq2.16}
\int_{\R^N}|\bar{\psi}_{\varepsilon_n, x_0}|^{q^{\prime}}dx=\int_{\R^N}\bigg|\bigg(\frac{Q(x_0)}{Q(\varepsilon_n x+x_0)}\bigg)^{\frac{1}{q}}\eta(|\varepsilon_n x|)U(x)\bigg|^{q^{\prime}}dx\rightarrow\int_{\R^N}|U|^{q^{\prime}}dx,
\end{equation}
as $n\rightarrow\infty$. Similarly, $\int_{\R^N}|\bar{\phi}_{\varepsilon_n, x_0}|^{p^{\prime}}dx\rightarrow\int_{\R^N}|V|^{p^{\prime}}dx$. Thus, it follows from \eqref{eq2.15} and \eqref{eq2.16} that 
\begin{equation*}
\begin{split}
\langle J_{\varepsilon_n}^\prime(\bar{z}_{{\varepsilon_n}, x_0}),\bar{z}_{{\varepsilon_n}, x_0}^T \rangle=&\int_{\R^N}|\bar{\psi}_{\varepsilon_n, x_0}|^{q^{\prime}}dx+\int_{\R^N}|\bar{\phi}_{\varepsilon_n, x_0}|^{p^{\prime}}dx-\int_{\R^N}\bar{z}_{{\varepsilon_n}, x_0}^T  \mathbf{K}_{\varepsilon_n} \bar{z}_{\varepsilon_n,x_0} dx\\
\rightarrow&\int_{\R^N}|U|^{q^{\prime}}dx+\int_{\R^N}|V|^{p^{\prime}}dx\\
&-\int_{\mathbb{R}^N}Q^{\frac{1}{q}}(x_0)U \mathbf{R}(P^{\frac{1}{p}}(x_0)V) dx-\int_{\mathbb{R}^N}P^{\frac{1}{p}}(x_0)V \mathbf{R}(Q^{\frac{1}{q}}(x_0)U) dx\\
=&\langle J_{0}^\prime(z_0),z_0^T\rangle=0, \quad \text{as}\  n\rightarrow\infty,
\end{split}
\end{equation*}
uniformly for $(x_0,x_0)\in M $, where $z_0=(U,V)$ is the least-energy critical point of $J_0$. Consequently,
\begin{equation}\label{eq2.17}
\int_{\R^N}|\bar{\psi}_{\varepsilon_n, x_0}|^{q^{\prime}}dx+\int_{\R^N}|\bar{\phi}_{\varepsilon_n, x_0}|^{p^{\prime}}dx=\int_{\R^N}\bar{z}_{{\varepsilon_n}, x_0}^T  \mathbf{K}_{\varepsilon_n} \bar{z}_{\varepsilon_n,x_0} dx+o_n(1).
\end{equation}
Using the fact that $t_{\varepsilon_n, x_0}\bar{z}_{{\varepsilon_n}, x_0}\in \mathcal{N}_{\varepsilon_n}$, we get 
\begin{equation}\label{eq2.18}
t_{\varepsilon_n, x_0}^{q^{\prime}-2}\int_{\R^N}|\bar{\psi}_{\varepsilon_n, x_0}|^{q^{\prime}}dx+t_{\varepsilon_n, x_0}^{p^{\prime}-2}\int_{\R^N}|\bar{\phi}_{\varepsilon_n, x_0}|^{p^{\prime}}dx=\int_{\R^N}\bar{z}_{{\varepsilon_n}, x_0}^T  \mathbf{K}_{\varepsilon_n} \bar{z}_{\varepsilon_n,x_0} dx.
\end{equation}
From \eqref{eq2.17} and \eqref{eq2.18}, we have 
\begin{equation}\label{eq2.19}
(t_{\varepsilon_n, x_0}^{q^{\prime}-2}-1)\int_{\R^N}|\bar{\psi}_{\varepsilon_n, x_0}|^{q^{\prime}}dx+(t_{\varepsilon_n, x_0}^{p^{\prime}-2}-1)\int_{\R^N}|\bar{\phi}_{\varepsilon_n, x_0}|^{p^{\prime}}dx=o_n(1).
\end{equation}
Taking the limit in \eqref{eq2.19}, we obtain $t_{\varepsilon_n, x_0}\rightarrow1$ as $n\rightarrow\infty$, uniformly for $(x_0,x_0)\in M$, and we obtain 
\begin{equation*}
\begin{split}
J_\varepsilon(\psi_{\varepsilon,x_0},\phi_{\varepsilon,x_0})&=\bigg(\frac{1}{q^\prime}-\frac{1}{2}\bigg)t_{\varepsilon,x_0}^{q^\prime}\int_{\mathbb{R}^N}\left|\bar{\psi}_{\varepsilon, x_0}\right|^{q^{\prime}} dx+\bigg(\frac{1}{p^\prime}-\frac{1}{2}\bigg)t_{\varepsilon,x_0}^{p^\prime}\int_{\mathbb{R}^N}\left|\bar{\phi}_{\varepsilon, x_0}\right|^{p^{\prime}} dx\\
&\rightarrow\bigg(\frac{1}{q^\prime}-\frac{1}{2}\bigg)\int_{\mathbb{R}^N}|U|^{q^{\prime}}dx+\bigg(\frac{1}{p^\prime}-\frac{1}{2}\bigg)\int_{\mathbb{R}^N}|V|^{p^{\prime}}dx\\
&=J_{0}(U,V)=J_{0}(z_0)=c_{M}\quad{as}\ \varepsilon\rightarrow0^+,
\end{split}
\end{equation*}
uniformly for $(x_0,x_0)\in M $.
\hfill
\end{proof}
\par
\begin{Lem}\label{Lem2.6}
For any $\varepsilon<0$, there holds $c_{M}\leq c_\varepsilon$ and $\lim\limits_{\varepsilon\rightarrow0^+}c_\varepsilon=c_{M}$. 
\end{Lem}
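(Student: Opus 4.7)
The target splits into two one-sided estimates: the lower bound $c_M\le c_\varepsilon$ valid for every $\varepsilon>0$, and the upper bound $\limsup_{\varepsilon\to 0^+}c_\varepsilon\le c_M$. The upper bound is essentially free from what has already been built: the admissible test pair $(\psi_{\varepsilon,x_0},\phi_{\varepsilon,x_0})$ from Lemma \ref{Lem2.4} lies in $\mathcal{N}_\varepsilon$, so $c_\varepsilon\le J_\varepsilon(\psi_{\varepsilon,x_0},\phi_{\varepsilon,x_0})$, and Lemma \ref{Lem2.5} gives the limit $c_M$ on the right. So the real work is the lower bound, and the natural tool is a pointwise rescaling that moves an $\varepsilon$-Nehari element onto a comparable element of $\mathcal{N}_0$ with non-increased Nehari-energy.

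The key construction is as follows. Given any $z=(\psi,\phi)\in\mathcal{N}_\varepsilon$, I would define
\[
\tilde\psi:=\Bigl(\frac{Q_\varepsilon}{\bar Q}\Bigr)^{1/q}\psi,\qquad \tilde\phi:=\Bigl(\frac{P_\varepsilon}{\bar P}\Bigr)^{1/p}\phi,\qquad \tilde z:=(\tilde\psi,\tilde\phi).
\]
A direct pointwise computation then yields $\int_{\R^N}\tilde z^{T}\mathbf{K}_0\tilde z\,dx=\int_{\R^N}z^{T}\mathbf{K}_\varepsilon z\,dx$, because the weights $(Q_\varepsilon/\bar Q)^{1/q}$ and $(P_\varepsilon/\bar P)^{1/p}$ cancel precisely against the extra factors $\bar Q^{1/q},\bar P^{1/p}$ appearing in $\mathbf{K}_0$. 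On the other hand, since $0\le Q_\varepsilon\le\bar Q$ and $0\le P_\varepsilon\le\bar P$ pointwise, one has the elementary inequalities $\|\tilde\psi\|_{q'}\le\|\psi\|_{q'}$ and $\|\tilde\phi\|_{p'}\le\|\phi\|_{p'}$.

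With this in hand, the lower bound is obtained by a projection to $\mathcal{N}_0$. First, since $z\in\mathcal{N}_\varepsilon\setminus\{0\}$, the coupling term is strictly positive, so $\tilde z\in U_0^+$ and there is a unique $t_{\tilde z}>0$ with $t_{\tilde z}\tilde z\in\mathcal{N}_0$. Combining the two properties above with $z\in\mathcal{N}_\varepsilon$ gives
\[
\langle J_0'(\tilde z),\tilde z\rangle=\|\tilde\psi\|_{q'}^{q'}+\|\tilde\phi\|_{p'}^{p'}-\int_{\R^N}\tilde z^{T}\mathbf{K}_0\tilde z\,dx\le \|\psi\|_{q'}^{q'}+\|\phi\|_{p'}^{p'}-\int_{\R^N}z^{T}\mathbf{K}_\varepsilon z\,dx=0.
\]
Because the fiber map $t\mapsto\langle J_0'(t\tilde z),t\tilde z\rangle/t^{2}$ is strictly decreasing on $(0,\infty)$ (the exponents $q'-2,p'-2$ are negative), this forces $t_{\tilde z}\le 1$. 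Then the Nehari identity $J_0(w)=(\tfrac1{q'}-\tfrac12)\|\psi_w\|_{q'}^{q'}+(\tfrac1{p'}-\tfrac12)\|\phi_w\|_{p'}^{p'}$ for $w\in\mathcal{N}_0$, applied to $w=t_{\tilde z}\tilde z$, together with $t_{\tilde z}\le 1$ and the norm inequalities, yields
\[
c_M\le J_0(t_{\tilde z}\tilde z)\le \Bigl(\tfrac1{q'}-\tfrac12\Bigr)\|\psi\|_{q'}^{q'}+\Bigl(\tfrac1{p'}-\tfrac12\Bigr)\|\phi\|_{p'}^{p'}=J_\varepsilon(z).
\]
Taking the infimum over $z\in\mathcal{N}_\varepsilon$ gives $c_M\le c_\varepsilon$, and combining with the upper bound produced by Lemmas \ref{Lem2.4}--\ref{Lem2.5} finishes the proof.

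\textbf{Main obstacle.} Conceptually nothing is hard here; the only delicate point is choosing the rescaling weights so that the coupling term $\int z^T\mathbf{K}_\varepsilon z$ is preserved exactly while the diagonal $L^{q'}$ and $L^{p'}$ norms go the right way (non-increasing). The positivity of $P,Q$ ensured by $(PQ_1)$ makes the weights well defined, and the global bounds $P\le\bar P$, $Q\le\bar Q$ give the desired monotonicity for free.
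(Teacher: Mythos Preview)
Your proposal is correct and follows essentially the same approach as the paper: the same rescaling $\tilde\psi=(Q_\varepsilon/\bar Q)^{1/q}\psi$, $\tilde\phi=(P_\varepsilon/\bar P)^{1/p}\phi$ is used to preserve the coupling term while decreasing the $L^{q'}\times L^{p'}$ norms, the resulting inequality $t_{\tilde z}\le 1$ is then combined with the Nehari identity to obtain $c_M\le J_\varepsilon(z)$, and the upper bound comes from Lemma~\ref{Lem2.5}. Your fiber-map justification of $t_{\tilde z}\le 1$ is in fact slightly cleaner than the paper's chain of inequalities, but the two arguments are the same in substance.
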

\begin{proof}
For any $z_\varepsilon=(\psi_\varepsilon,\phi_\varepsilon)\in \mathcal{N}_\varepsilon$ and set $w_0=(\psi_0,\phi_0)=((Q_\varepsilon/\bar{Q})^{\frac{1}{q}}\psi_\varepsilon,(P_\varepsilon/\bar{P})^{\frac{1}{p}}\phi_\varepsilon)$. It is easy to see that 
\begin{equation}\label{eq2.21}
\begin{split}
&\int_{\R^N}\bar{P}^{\frac{1}{p}}\phi_0\mathbf{R}(\bar{Q}^{\frac{1}{q}}\psi_0)dx=\int_{\R^N}\bar{Q}^{\frac{1}{q}}\psi_0\mathbf{R}(\bar{P}^{\frac{1}{p}}\phi_0)dx\\
=&\int_{\R^N}P_\varepsilon^{\frac{1}{p}}\phi_\varepsilon\mathbf{R}(Q_\varepsilon^{\frac{1}{q}}\psi_\varepsilon)dx=\int_{\R^N}Q_\varepsilon^{\frac{1}{q}}\psi_\varepsilon\mathbf{R}(P_\varepsilon^{\frac{1}{p}}\phi_\varepsilon)dx>0,
\end{split}
\end{equation}
which implies that $w_0\in U_0^+=\{z\in X:\int_{\R^N}z^T\mathbf{K}_0zdx>0\}$ and there exists $t_\varepsilon>0$ such that $t_\varepsilon w_0\in \mathcal{N}_0$. Then, we have 
\begin{equation*}
\begin{split}
\|\psi_\varepsilon\|_{q^\prime}^{q^\prime}+\|\phi_\varepsilon\|_{p^\prime}^{p^\prime}&=\int_{\R^N}z_\varepsilon^T \mathbf{K}_\varepsilon z_\varepsilon dx= \int_{\R^N}w_0^T \mathbf{K}_0 w_0 dx \\
&=t_\varepsilon^{q^\prime-2}\|\psi_0\|_{q^\prime}^{q^\prime}+t_\varepsilon^{p^\prime-2}\|\phi_0\|_{p^\prime}^{p^\prime}\\
&\leq t_\varepsilon^{q^\prime-2}\|\psi_\varepsilon\|_{q^\prime}^{q^\prime}+t_\varepsilon^{p^\prime-2}\|\phi_\varepsilon\|_{p^\prime}^{p^\prime}.
\end{split}
\end{equation*}
and we conclude that $t_\varepsilon\leq 1$. 
As a consequence, we obtain
\begin{equation*}
\begin{split}
c_M&\leq J_0(t_\varepsilon z_0)=(\frac{1}{q^\prime}-\frac{1}{2})t_\varepsilon^{q^\prime}\|\psi_0\|_{q^\prime}^{q^\prime}+(\frac{1}{p^\prime}-\frac{1}{2})t_\varepsilon^{p^\prime}\|\phi_0\|_{p^\prime}^{p^\prime}\\
&\leq(\frac{1}{q^\prime}-\frac{1}{2})\|\psi_\varepsilon\|_{q^\prime}^{q^\prime}+(\frac{1}{p^\prime}-\frac{1}{2})\|\phi_\varepsilon\|_{p^\prime}^{p^\prime}=J_\varepsilon(z_\varepsilon),
\end{split}
\end{equation*}
Using the arbitrariness of $z_\varepsilon$, there holds $c_M\leq c_\varepsilon$. Lemma \ref{Lem2.5} gives for $(x_0,x_0) \in M$, $c_\varepsilon\leq J_{\varepsilon}(\psi_{\varepsilon, x_0}, \phi_{\varepsilon,x_0})\rightarrow c_M$ as $\varepsilon\rightarrow0^+$, combining the previous inequality, we have $\lim\limits_{\varepsilon\rightarrow0^+}c_\varepsilon=c_M$.
\hfill
\end{proof}
\section{Palais-Smale condition and representation Lemma}
To establish the Palais-Smale condition for $J_\varepsilon$ on $\mathcal{N_\varepsilon}$ below the level $c_\infty$, we first prove the following decay estimation for the nonlocal interaction induced by the generalized Birman-Schwinger operator. The following Lemma can also be regarded as a generalization of \cite[Lemma 2.4]{Evequoz2017AMPA}.
\begin{Lem}\label{Lem3.1}
Let $N \geq 3$, $\frac{2 N}{N-1}<p,q \leq \infty$, and $\frac{1}{q}+\frac{1}{p}<\frac{N-1}{N+1}$. Put $M_R:=\mathbb{R}^N \backslash B_R$ for $R>0$,
then there exists a constant $C=C(N,p,q)>0$ such that for any $R>0$, $m\geq8$ and $u\in L^{q^{\prime}}(\R^N),v\in L^{p^{\prime}}(\R^N)$ with $supp(u)\subset  B_{R}$ and $supp(v)\subset  M_{R+m}$,
$$\left|\int_{\mathbb{R}^N} u \mathbf{R} v \mathrm{~d} x\right| \leq C m^{-\lambda}\|u\|_{q^{\prime}}\|v\|_{p^{\prime}},$$
 where  
 $$\lambda=\lambda(p, q):=-\max \left\{\frac{N}{p}+\frac{1-N}{2}, \frac{N}{q}+\frac{1-N}{2}, \frac{1-N}{2}+
 \frac{N+1}{2}\left(\frac{1}{q}+\frac{1}{p}\right)\right\}>0.$$
\end{Lem}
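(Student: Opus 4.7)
I would prove the bound by a dyadic decomposition of the resolvent kernel $\Psi$ in radial shells. Because $\mathrm{supp}(u) \subset B_R$ and $\mathrm{supp}(v) \subset M_{R+m}$, any $(x,y)$ contributing to $\int u\,\mathbf{R} v\, dx$ satisfies $|x-y| \geq m \geq 8$, so only the far-field part of $\Psi$ is relevant. Using the asymptotic expansion of the Hankel function $H^{(1)}_{(N-2)/2}$, I would record the pointwise decay $|\Psi(z)| \leq C|z|^{-(N-1)/2}$ for $|z| \geq 1$ together with the sharper oscillatory form $\Psi(z) = c_0|z|^{-(N-1)/2}\cos(|z| - \phi_0) + O(|z|^{-(N+1)/2})$; this is the starting point for every estimate below.

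Next I would choose smooth radial cutoffs $\chi_j$ supported in the annulus $A_j := \{2^{j-1}m \leq |z| < 2^{j+1}m\}$ for $j \geq 0$, summing to $\mathbf{1}_{\{|z| \geq m/2\}}$, and set $K_j := \Psi\chi_j$. Combining the pointwise decay with $|A_j| \leq C(2^jm)^N$ yields $\|K_j\|_r \leq C(2^jm)^{-(N-1)/2 + N/r}$ for every $r \in [1,\infty]$, and since the complementary piece $\Psi\mathbf{1}_{\{|z|<m/2\}}$ convolves to zero between $u$ and $v$, the decomposition
\[
\int u\,\mathbf{R} v\, dx = \sum_{j\geq 0} I_j, \qquad I_j := \int u(K_j * v)\, dx
\]
reduces the problem to bounding each $I_j$ sharply by $C(2^jm)^{-\lambda}\|u\|_{q'}\|v\|_{p'}$.

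The three exponents appearing in $\lambda$ correspond to three competing bounds on $I_j$. For the critical exponent $\lambda_3 := (N-1)/2 - \tfrac{N+1}{2}(1/p + 1/q)$, I would exploit the oscillation $\cos(|z|-\phi_0)$ in the Hankel asymptotic to identify $\Psi$ on the shell $A_j$ with the Fourier transform of a sphere-supported measure; a rescaling $z = (2^jm)w$ combined with the Stein--Tomas/Gutiérrez $L^{p'}\to L^q$ estimate (which is exactly the mapping property of $\mathbf{R}$ used throughout the paper, via Theorem~2.1 of Ding--Wang) then yields $\|K_j*v\|_q \leq C(2^jm)^{-\lambda_3}\|v\|_{p'}$, hence $I_j \leq C(2^jm)^{-\lambda_3}\|u\|_{q'}\|v\|_{p'}$ by one Hölder step. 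For the asymmetric exponents $\lambda_1 := (N-1)/2 - N/p$ and $\lambda_2 := (N-1)/2 - N/q$, I would use the analogous off-diagonal bounds obtained by pairing the $L^\infty$ Young estimates $\|K_j*v\|_\infty \leq \|K_j\|_p\|v\|_{p'}$ and $\|K_j*u\|_\infty \leq \|K_j\|_q\|u\|_{q'}$ against the companion Gutiérrez mappings $\mathbf{R}\colon L^{q'}\to L^p$ and $\mathbf{R}\colon L^{p'}\to L^q$ applied on the complementary factor, so as to restore the $R$-independent $L^{q'}$ and $L^{p'}$ norms of $u$ and $v$. Taking the smallest of the three exponents and summing the geometric series $\sum_{j\geq 0}(2^jm)^{-\lambda_i}$ gives the claimed $Cm^{-\lambda}\|u\|_{q'}\|v\|_{p'}$, with positivity $\lambda = \min\{\lambda_1,\lambda_2,\lambda_3\} > 0$ being equivalent to the three standing hypotheses $p,q>2N/(N-1)$ and $1/p+1/q<(N-1)/(N+1)$.

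The main obstacle will be the shell-localized Stein--Tomas/Gutiérrez estimate yielding the sharp $\lambda_3$ bound: the truncated kernel $K_j$ is not a pure sphere-supported Fourier multiplier, so Gutiérrez's theorem does not apply verbatim. One must carefully control the error terms in the Hankel expansion and the Fourier smearing induced by the cutoff $\chi_j$ through the rescaling argument, and verify that naive Young's inequality (which only yields the weaker exponent $(N-1)/2 - N(1/p+1/q)$) is genuinely improved to $\lambda_3$ by the oscillatory structure. A related subtlety, which dictates pairing the $L^{q'}$ and $L^{p'}$ norms through operator mapping properties rather than through spatial Hölder inequalities, is ensuring that every constant is independent of $R$; otherwise a factor of $|B_R|^{1/q}$ would spoil the estimate.
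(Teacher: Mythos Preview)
Your strategy is in the right spirit but takes a different route from the paper. The paper does not perform a radial dyadic decomposition of $\Psi$. Instead it uses the Ev\'equoz--Weth Fourier-side splitting $\Phi = \Phi_1 + \Phi_2$, where $\widehat{\Phi_1}$ is supported near the unit sphere and $|\Phi_2(x)| \leq C\min\{|x|^{2-N}, |x|^{-N}\}$. After one H\"older step $\bigl|\int u\,\mathbf{R}v\,dx\bigr| \leq \|v\|_{p'}\|\Phi*u\|_{L^p(M_{R+m})}$, the $\Phi_2$ piece is handled by a single Young inequality (yielding a rate faster than $m^{-\lambda}$). For $\Phi_1$ the paper introduces a second Schwartz mollifier $\phi$ with $\widehat\phi \equiv 1$ on the support of $\widehat{\Phi_1}$, writes $\Phi_1*u = c\,\Phi_1*(\phi*u)$, splits $\Phi_1 = 1_{B_{m/2}}\Phi_1 + 1_{M_{m/2}}\Phi_1$, and then quotes \cite[Corollary~2.11]{Ding-Wang2023JDE} as a black box to obtain $\|(1_{M_{m/2}}\Phi_1)*(\phi*u)\|_p \leq C(m/2)^{-\lambda}\|\phi*u\|_{q'}$, with the three-term $\lambda$ already packaged in that citation; the remaining $1_{B_{m/2}}\Phi_1$ term is disposed of using the rapid decay of $\phi$. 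Thus all of the hard harmonic analysis---your shell-localized Stein--Tomas bound for $\lambda_3$ and whatever mechanism produces $\lambda_1,\lambda_2$---is outsourced, and the paper's own work reduces to bookkeeping plus elementary Young estimates. Your proposal instead tries to derive the sharp exponent from scratch on each dyadic shell, which is more self-contained but amounts to reproving that external corollary inline.

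One step in your sketch looks like a genuine gap: the derivation of $\lambda_1$ and $\lambda_2$. The bound $\|K_j*v\|_\infty \leq \|K_j\|_p\|v\|_{p'} \leq C(2^jm)^{-\lambda_1}\|v\|_{p'}$ is correct, but pairing it against $u$ via $|I_j| \leq \|u\|_1\|K_j*v\|_\infty$ produces exactly the forbidden $R$-dependent factor $\|u\|_1 \leq |B_R|^{1/q}\|u\|_{q'}$, and the phrase ``applying the companion Guti\'errez mapping $\mathbf{R}\colon L^{q'}\to L^p$ on the complementary factor'' does not describe a valid operation that removes it: the full resolvent carries no $j$-decay, and interpolating $T_j\colon L^{p'}\to L^\infty$ (rate $(2^jm)^{-\lambda_1}$) with $T_j\colon L^{p'}\to L^q$ (rate $O(1)$ from Guti\'errez) strictly degrades the exponent below $\lambda_1$ at the target space $L^q$. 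In the Ding--Wang argument the three exponents emerge from a single $L^{q'}\to L^p$ estimate on $1_{M_R}\Phi_1*$ that exploits the sphere-localized Fourier structure of $\Phi_1$ through a rescaling; if you wish to avoid citing that result, you must actually carry out that rescaled restriction argument rather than the ad hoc Young/Guti\'errez pairing you describe.
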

\begin{proof}
Let $u,v\in \mathscr{S}$ with $supp(u)\subset B_{R}$ and $supp(v) \subset M_{R+m} $, by the symmetry of the operator $\mathcal{R}$ we know that
\begin{equation}\label{eq3.1}
\begin{split}
\left|\int_{\mathbb{R}^N} u \mathcal{R} v dx\right|=\left|\int_{\mathbb{R}^N} v \mathcal{R} u dx\right| &\leq\|v\|_{p^{\prime}}\|\Phi * u\|_{L^p\left(M_{R+m}\right)}\\
&\leq\|v\|_{p^{\prime}}(\|\Phi_1* u\|_{L^p(M_{R+m})}+\|\Phi_2* u\|_{L^p(M_{R+m})}),
\end{split}
\end{equation}
where the decomposition of the fundamental solution $\Phi$ proceeds as follows: fix $\psi\in\mathscr{S}$ such that $\widehat{\psi}\in\mathcal{C}_c^{\infty}\left(\mathbb{R}^N\right)$ is radial, $0 \leq \widehat{\psi} \leq 1, \widehat{\psi}(\xi)=1$ for $\| \xi|-1| \leq \frac{1}{6}$ and $\widehat{\psi}(\xi)=0$ for $||\xi|-1| \geq \frac{1}{4}$, we can write $\Phi=\Phi_1+\Phi_2$ with
$$\Phi_1:=(2 \pi)^{-\frac{N}{2}}(\psi * \Phi), \quad \Phi_2:=\Phi-\Phi_1,$$
and there are the following estimates (see \cite{Evequoz-Weth2015AM}):
\begin{equation}\label{eq3.2}
\begin{split}
\left|\Phi_1(x)\right|&\leq C_0(1+|x|)^{\frac{1-N}{2}} \quad\quad\quad\quad\  \text { for } x \in \mathbb{R}^N;\\
\left|\Phi_2(x)\right|&\leq C_0\min\{|x|^{2-N},|x|^{-N}\}\quad \text { for }\  x \in \mathbb{R}^N\setminus\{0\}.
\end{split}
\end{equation}
Firstly, we provide an estimate for the item $\|\Phi_2* u\|_{L^p(M_{R+m})}$. By \eqref{eq3.2} and Young's inequality, we have
\begin{equation}\label{eq3.3}
\begin{split}
\|\Phi_2* u\|_{L^p(M_{R+m})}&\leq\|[1_{M_m}|\Phi_2|]\ast|u|\|_p\leq \|1_{M_m}\Phi_2\|_r\|u\|_{q^\prime}\\
&\leq C \bigg(\int_{|x|\geq m}|x|^{-Nr}dx\bigg)^{\frac{1}{r}}\|u\|_{q^\prime}\\
&=C m^{\frac{N-Nr}{r}}\|u\|_{q^\prime},
\end{split}
\end{equation}
where $\frac{1}{r}=\frac{1}{p}+\frac{1}{q}$. To estimate $\|\Phi_1* u\|_{L^p(M_{R+m})}$, as in \cite[Lemma 2.4]{Evequoz2017AMPA}, we fix a radial function $\phi \in \mathscr{S}\left(\mathbb{R}^N\right)$ such that $\widehat{\phi} \in \mathcal{C}_c^{\infty}\left(\mathbb{R}^N\right)$ is radial, $0 \leq \widehat{\phi} \leq 1, \widehat{\phi}(\xi)=1$ for $\| \xi|-1| \leq \frac{1}{4}$ and $\widehat{\phi}(\xi)=0$ for $||\xi|-1| \geq \frac{1}{2}$. Moreover, let $\tilde{u}:=\phi * u \in \mathscr{S}\left(\mathbb{R}^N\right)$, then $\Phi_1 * u=(2 \pi)^{-\frac{N}{2}}\left(\Phi_1 * \tilde{u}\right)$, and so 
\begin{equation}\label{eq3.4}
\begin{split}
\|\Phi_1* u\|_{L^p(M_{R+m})}&=C\|\Phi_1 * \tilde{u}\|_{L^p(M_{R+m})}\\
&\leq C(\|[1_{B_{\frac{m}{2}}} \Phi_1] * \tilde{u}\|_{L^p(M_{R+m})}+\|[1_{M_{\frac{m}{2}}} \Phi_1] * \tilde{u}\|_{L^p(M_{R+m})}).
\end{split}
\end{equation}
By the \cite[Corollary 2.11]{Ding-Wang2023JDE} and Young's inequality, we know that 
\begin{equation}\label{eq3.5}
\begin{split}
\|[1_{M_{\frac{m}{2}}} \Phi_1] * \tilde{u}\|_{L^p(M_{R+m})}&\leq C\bigg(\frac{m}{2}\bigg)^{-\lambda}\|\tilde{u}\|_{q^\prime}=C\bigg(\frac{m}{2}\bigg)^{-\lambda}\|\phi * u\|_{q^\prime}\leq C\bigg(\frac{m}{2}\bigg)^{-\lambda}\|\phi\|_{1}\|u\|_{q^\prime}.
\end{split}
\end{equation}
A similar estimation of \eqref{eq3.3} yields
\begin{equation*}
\begin{split}
\|[1_{B_{\frac{m}{2}}} \Phi_1] * \tilde{u}\|_{L^p(M_{R+m})}&=\|([1_{B_{\frac{m}{2}}} \Phi_1] * \phi) *u\|_{L^p(M_{R+m})}\\
&\leq \|1_{M_m}|\left[1_{B \frac{m}{2}} \Phi_1\right] * \phi |* |u|\|_p\\
&\leq \|1_{M_m}(\left[1_{B \frac{m}{2}} \Phi_1\right] * \phi )\|_r\|u\|_{q^\prime}.
\end{split}
\end{equation*}
Note that $\phi\in \mathscr{S}$ and combine it with \eqref{eq3.2} to know
 \begin{equation*}
\begin{split}
\|1_{M_m}(\left[1_{B \frac{m}{2}} \Phi_1\right] * \phi )\|^r_r&=\int_{|x|\geq m}\bigg|\int_{|y|<\frac{m}{2}}\Phi_1(y)\phi(x-y)dy\bigg|^rdx\\
&\leq \int_{|x|\geq m}\bigg(\int_{|y|<\frac{m}{2}}C_0(1+|y|)^{\frac{1-N}{2}}|\phi(x-y)|dy\bigg)^rdx\\
&\leq C \int_{|x|\geq m}\bigg(\int_{|y|<\frac{m}{2}}|x-y|^{-s}dy\bigg)^rdx\\
&\leq C |B_{\frac{m}{2}}|^r \int_{|x|\geq m}\bigg (|x|-\frac{m}{2}\bigg)^{-sr}dx\\
&=\frac{C\omega_N}{N}\bigg(\frac{m}{2}\bigg)^{Nr+N-sr}\int_{|z|\geq 1}\bigg(|z|-\frac{1}{2}\bigg)^{-sr}dx\\
&=C m^{Nr+N-sr},
\end{split}
\end{equation*}
where $s$ may be fixed so large that $s\geq 2N$. Consequently, 
\begin{equation}\label{eq3.6}
\|[1_{B_{\frac{m}{2}}} \Phi_1] * \tilde{u}\|_{L^p(M_{R+m})}\leq Cm^{\frac{Nr+N-sr}{r}}\|u\|_{q^\prime}.
\end{equation}
 It is easy to see that the function $k(t)=m^t$ increases with respect to $t\in \R$ since $m\geq8$, and 
$$-\lambda\geq \frac{N}{q}+\frac{1-N}{2}\geq \frac{N-N r}{r}\geq\frac{N r+N-s r}{r}.$$
Thus, for any $u,v\in \mathscr{S}$ with  $supp(u)\subset  B_{R}$ and $supp(v)\subset  M_{R+m}$, it holds by \eqref{eq3.1}, \eqref{eq3.5} and \eqref{eq3.6} that
\begin{equation}\label{eq3.7}
\left|\int_{\mathbb{R}^N} u \mathcal{R} v dx\right|=\left|\int_{\mathbb{R}^N} v \mathcal{R} u dx\right|\leq C m^{-\lambda}\|u\|_{q^{\prime}}\|v\|_{p^{\prime}}.
\end{equation}
Finally, the proof is completed via the density argument.
\hfill
\end{proof}

Let $c_\infty$ be the ground energy associated with 
\begin{equation}\label{eq3.8}
\left\{\begin{array}{l}
|\psi|^{q^\prime-2}\psi=Q_\infty^{\frac{1}{q}}\mathbf{R}(P_\infty^\frac{1}{p}\phi), \\
|\phi|^{p^\prime-2}\phi=P_\infty^{\frac{1}{p}}\mathbf{R}(Q_\infty^\frac{1}{q}\psi),
\end{array} \right.
\end{equation}
we have $c_\infty=\inf\limits_{z\in\mathcal{N}_\infty}J_\infty(z)>0$, where $\mathcal{N}_\infty$ and $J_\infty$ represent the corresponding Nehari set and energy functional associated with \eqref{eq3.8}. For convenience, we introduce the notation $\mathbf{K}_\infty$ as follows
$$\mathbf{K}_{\infty} z=\left(\begin{array}{cc}
0 & \mathbf{K}_{q, p}^{Q_\infty, P_\infty)} \\
\mathbf{K}_{p, q}^{P_\infty, Q_\infty} & 0
\end{array}\right)\binom{\psi}{\phi}\quad \text{for } z=(\psi,\phi)^T\in X.$$
Next, we prove the Palais-Smale condition for the dual energy functional $J_\varepsilon$ on the Nehari manifold $\mathcal{N}_\varepsilon$ below the limit energy level $c_\infty$.

\begin{Lem}\label{Lem3.2}
For $\varepsilon>0$ sufficiently small, If $(PQ_1)$$-$$(PQ_3)$ and $c_\varepsilon<c_\infty$ hold, then $J_\varepsilon$ satisfies the Palais-Smale condition in $\{z\in \mathcal{N}_\varepsilon:J_\varepsilon< c_\infty\}$, i.e., every sequence $\{z_n\}\subset \mathcal{N}_\varepsilon$ such that $J_\varepsilon(z_n)\rightarrow d<c_\infty$ and $J^\prime_\varepsilon|_{\mathcal{N}_\varepsilon}(z_n)\rightarrow0$ as $n\rightarrow\infty$ has a convergent subsequence.
\end{Lem}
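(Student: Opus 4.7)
The plan is to run a Brezis--Lieb decomposition and then rule out mass of the Palais--Smale sequence escaping to infinity by comparing $J_\varepsilon$ with the limit functional $J_\infty$; the decay estimate of Lemma~\ref{Lem3.1} will be the key tool to control the nonlocal cross interactions that arise in this comparison.

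Starting from $\{z_n\}\subset\mathcal{N}_\varepsilon$ with $J_\varepsilon(z_n)\to d<c_\infty$ and $J_\varepsilon'|_{\mathcal{N}_\varepsilon}(z_n)\to 0$, I would first upgrade this to a free Palais--Smale sequence via the standard Lagrange multiplier argument on the Nehari manifold: since $q',p'<2$ forces $\langle g'(z_n),z_n\rangle\leq -c_0<0$ on $\mathcal{N}_\varepsilon$, one gets $J_\varepsilon'(z_n)\to 0$ in $X^*$. Lemma~\ref{Lem2.2+} then yields, up to a subsequence, $z_n\rightharpoonup z$ with $J_\varepsilon'(z)=0$ and $J_\varepsilon(z)\leq d$. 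Setting $w_n:=z_n-z$, a componentwise application of Brezis--Lieb together with the weak convergence $w_n\rightharpoonup 0$ (and $\mathbf{K}_\varepsilon z\in X^*$) would yield
$$J_\varepsilon(w_n)\to d-J_\varepsilon(z)<c_\infty,\qquad J_\varepsilon'(w_n)\to 0\text{ in } X^*,\qquad w_n\rightharpoonup 0,$$
reducing the problem to showing $w_n\to 0$.

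The heart of the argument is to rule out $w_n\not\to 0$. Assuming the contrary, I would test $J_\varepsilon'(w_n)$ against $w_n$ to see that $\int w_n^T\mathbf{K}_\varepsilon w_n\,dx$ is bounded away from zero, and invoke Lemma~\ref{Lem2.1}(b) together with $w_n\rightharpoonup 0$ to show $1_{B_R}\mathbf{K}_\varepsilon w_n\to 0$ strongly for every $R$; thus no fixed ball carries a positive fraction of the quadratic mass. For $\eta>0$ I would use $(PQ_2)$ to pick $R_\eta$ with $P_\varepsilon\leq P_\infty+\eta$ and $Q_\varepsilon\leq Q_\infty+\eta$ on $B_{R_\eta}^c$, then split $w_n=w_n^{\mathrm{in}}+w_n^{\mathrm{mid}}+w_n^{\mathrm{out}}$ via the characteristic functions of $B_{R_\eta}$, $B_{R_\eta+m}\setminus B_{R_\eta}$ and $B_{R_\eta+m}^c$ with $m\geq 8$. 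The in--out cross term of $\int w_n^T\mathbf{K}_\varepsilon w_n\,dx$ would be $O(m^{-\lambda})$ by Lemma~\ref{Lem3.1}; all contributions involving $w_n^{\mathrm{in}}$ or $w_n^{\mathrm{mid}}$ would vanish as $n\to\infty$ (for $R_\eta,m$ fixed) by the compactness of $1_B\mathbf{K}_\varepsilon$; and the out--out piece would be bounded above by $\int w_n^T\mathbf{K}_\infty w_n\,dx+C\eta\,\|w_n\|^2$ thanks to the pointwise bounds on $P_\varepsilon,Q_\varepsilon$ in $B_{R_\eta}^c$. These estimates together would give
$$J_\varepsilon(w_n)\geq J_\infty(w_n)-C(\eta+m^{-\lambda})+o_n(1),$$
and the same splitting applied to $\langle J_\varepsilon'(w_n),w_n\rangle$ would yield $\langle J_\infty'(w_n),w_n\rangle=o_n(1)+O(\eta+m^{-\lambda})$. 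Hence $w_n\in U_\infty^+$ for large $n$ and the unique $t_n>0$ with $t_n w_n\in\mathcal{N}_\infty$ would satisfy $t_n\to 1$, so $J_\infty(w_n)\geq c_\infty+o(1)$. Sending $n\to\infty$, $m\to\infty$, $\eta\to 0$ in that order would produce $d-J_\varepsilon(z)\geq c_\infty$, contradicting $J_\varepsilon(z)\geq 0$ and $d<c_\infty$; therefore $w_n\to 0$.

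The principal difficulty will be the nonlocal character of the quadratic form: escaping mass of $w_n$ still interacts through $\mathbf{R}$ with bounded regions where $P_\varepsilon$ and $Q_\varepsilon$ may substantially exceed $P_\infty,Q_\infty$, so a pure pointwise comparison is not available. Lemma~\ref{Lem3.1} is precisely what makes the reduction work: its quantitative $m^{-\lambda}$ decay of the cross interaction lets one discard the coupling between the localized part of $w_n$ (where $P_\varepsilon,Q_\varepsilon$ are uncontrolled) and its escaping part (where they are essentially dominated by $P_\infty,Q_\infty$), reducing everything to a clean comparison between $\mathbf{K}_\varepsilon$ and $\mathbf{K}_\infty$ on the far region.
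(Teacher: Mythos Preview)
Your overall plan is sound and the use of Lemma~\ref{Lem3.1} to kill the in--out interaction is exactly the right idea. However, there is a genuine gap in the step where you compare the out--out quadratic forms of $\mathbf{K}_\varepsilon$ and $\mathbf{K}_\infty$.

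The claimed bound
\[
\int (w_n^{\mathrm{out}})^T\mathbf{K}_\varepsilon w_n^{\mathrm{out}}\,dx \leq \int w_n^T\mathbf{K}_\infty w_n\,dx + C\eta\,\|w_n\|^2
\]
does not follow from the pointwise inequalities $P_\varepsilon\leq P_\infty+\eta$, $Q_\varepsilon\leq Q_\infty+\eta$ on $B_{R_\eta}^c$, for two reasons. First, $(PQ_2)$ only asserts $\limsup_{|x|\to\infty}P(x)=P_\infty$; there is no lower bound of the form $P_\varepsilon\geq P_\infty-\eta$ on the far region, so $|P_\varepsilon^{1/p}-P_\infty^{1/p}|$ need not be small there. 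Second, even with two-sided control of the weights, the Helmholtz resolvent kernel $\Psi$ oscillates and $\mathbf{R}$ is not positivity-preserving, so a pointwise inequality between coefficients does not yield an inequality between the bilinear forms $\int a^{1/q}\psi\,\mathbf{R}(b^{1/p}\phi)\,dx$. Consequently you cannot establish $w_n\in U_\infty^+$, nor $\langle J_\infty'(w_n),w_n\rangle=o_n(1)+O(\eta+m^{-\lambda})$, nor $t_n\to 1$.

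The paper circumvents this by a rescaling trick rather than a direct comparison: working with the original sequence $z_{n_k}\in\mathcal{N}_\varepsilon$ (not $z_n-z$), it truncates to $w_{n_k}=1_{\{|x|\geq 2r\}}z_{n_k}$ and then sets
\[
\tilde w_k=\Bigl(\bigl(Q_\varepsilon/Q_\infty\bigr)^{1/q}\hat\psi_{n_k},\,\bigl(P_\varepsilon/P_\infty\bigr)^{1/p}\hat\phi_{n_k}\Bigr).
\]
This makes the two quadratic forms \emph{identically equal}, $\int \tilde w_k^T\mathbf{K}_\infty\tilde w_k\,dx=\int w_{n_k}^T\mathbf{K}_\varepsilon w_{n_k}\,dx$, so no sign or monotonicity of $\mathbf{R}$ is needed. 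The only comparison required is then of the $L^{q'}$ and $L^{p'}$ norms of $\tilde w_k$ versus those of $z_{n_k}$, and here the one-sided bound $Q_\varepsilon\leq Q_\infty+\eta$ is exactly what is needed (it gives $\|\tilde\psi_k\|_{q'}^{q'}\leq (1+\eta/Q_\infty)^{q'/q}\|\hat\psi_{n_k}\|_{q'}^{q'}$). Together with $z_{n_k}\in\mathcal{N}_\varepsilon$ this forces the Nehari projection parameter $t_{k,\infty}\leq 1$, and the contradiction $c_\infty\leq(1+\eta/\alpha)^{\max\{q'/q,\,p'/p\}}d$ follows. If you want to keep your Brezis--Lieb route, you should replace the pointwise comparison of quadratic forms by this rescaling of the test function.
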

\begin{proof}
To begin with, by $c_M<c_\infty$ and Lemma \ref{Lem2.5}, we know that the sublevel $\{z\in \mathcal{N}_\varepsilon:J_\varepsilon< c_\infty\}$ is not empty for $\varepsilon>0$ small enough. Next, we just need to consider $d\in[c_\varepsilon,c_\infty)$ and let $\{z_n\}=\{(\psi_n,\phi_n)^T\}$ be a $(PS)_d$-sequence for $J_\varepsilon$ on $\mathcal{N}_\varepsilon$. 
We can assume that $\{z_n\}$ is a $(PS)_d$-sequence for the unconstrained functional $J_\varepsilon$ because $\mathcal{N}_\varepsilon$ is a natural constraint. By Lemma \ref{Lem2.2+}, up to a subsequence, there exists $z=(\psi,\phi)^T\in X$ such that $z_n\rightharpoonup z$ in $X$, which implies that $\psi_n \rightharpoonup \psi \in L^{q^{\prime}}\left(\mathbb{R}^N\right)$ and $\phi_n \rightharpoonup \phi \in L^{p^{\prime}}\left(\mathbb{R}^N\right)$. In addition, we obtain that $\psi_n \rightarrow \psi \in L^{q^{\prime}}\left(B_R\right)$ and $\phi_n \rightarrow \phi \in L^{p^{\prime}}\left(B_R\right)$ via Sobolev's embedding theorem for any $R>0$, more details can be found in \cite[Lemma 2.6 (b)]{Ding-Wang2023JDE}. In order to prove $z_n\rightarrow z$ in $X$, it suffices to show that for any $\delta>0$ there exists $R>0$ such that 
\begin{equation}\label{eq3.10}
\int_{|x| \geq R}\left|\psi_n\right|^{q^\prime} d x<\delta, \quad \int_{|x| \geq R}\left|\phi_n\right|^{p^\prime} d x<\delta, \quad \forall n.
\end{equation}

For this we first claim that: $\forall \eta>0 \ \text{and}\  \forall R>0, \exists r>R \ \text { such that}$
\begin{equation}\label{eq3.11}
\liminf\limits _{n \rightarrow \infty}\int_{r<|x|<2r}\left|\psi_n\right|^{q^{\prime}} dx<\eta\quad \text{and}\quad \liminf\limits _{n \rightarrow \infty}\int_{r<|x|<2r}\left|\phi_n\right|^{p^{\prime}} dx<\eta.
\end{equation}
In fact, if not, there exists $\eta_0, R_0>0$ and $n_0=n_0(m)$ for every $m>R_0$, such that 
\begin{equation}\label{eq3.12}
\int_{m<|x|<2m}\left|\psi_n\right|^{q^{\prime}} dx\geq\eta_0\quad \text{or}\quad \int_{m<|x|<2m}\left|\phi_n\right|^{p^{\prime}} dx\geq\eta_0,\ \forall n\geq n_0.
\end{equation}
Without loss of generality, we assume that $n_0(m+1)\geq n_0(m)$ for all $m$ and the first inequality in \eqref{eq3.12} holds. Thus, for every $ l\in\mathbb{N}$, there exists $N_0(l)$ such that 
$$
\int_{\mathbb{R}^N}\left|\psi_n\right|^{q^{\prime}} \mathrm{d} x \geq \sum_{k=0}^{l-1} \int_{2^k\left(\left[R_0\right]+1\right)<|x|<2^{k+1}\left(\left[R_0\right]+1\right)}\left|\psi_n\right|^{q^{\prime}} \mathrm{d} x \geq l \eta_0\rightarrow\infty
$$
if $l\rightarrow\infty$, $\forall n\geq N_0(l)$. This contradicts the fact that $\|\psi_n\|_{q\prime}\leq\|z_n\|\leq C_1$.

Returning to inequalities \eqref{eq3.10}, we assume that it does not hold, then there exists a subsequence $\{z_{n_k}\}=\{(\psi_{n_k},\phi_{n_k})^T\}\subset X$ and a $\delta_0>0$ such that 
\begin{equation}\label{eq3.13}
\int_{|x|\geq k}\left|\psi_{n_k}\right|^{q^{\prime}} dx\geq\delta_0\quad \text{or}\quad \int_{|x|\geq k}\left|\phi_{n_k}\right|^{p^{\prime}} dx\geq\delta_0, \ \forall k .
\end{equation}
Now, let us fix $0<\eta<\min\{1, (\frac{\delta_0}{6 C_2})^{p^{\prime}}\}$, where $C_2=\max\{C_1^2,C_1\}C(N,p,q)\|P\|^{\frac{1}{p}}_\infty \|Q\|^{\frac{1}{q}}_\infty$. Here, the constant $C(N,p,q)>0$ is chosen such that Lemma \ref{Lem3.1} holds and $\|\mathbf{R}(\psi)\|_{p}\leq C(N,p,q)\|\psi\|_{q^\prime}$ for any $\psi\in L^{q^\prime}(\mathbb{R}^N)$. In addition, by $(PQ)_2$ we can find a $R(\eta)>0$ such that 
\begin{equation}\label{eq3.14}
Q_{\varepsilon}(x) \leq Q_{\infty}+\eta\quad \text{and}\quad P_{\varepsilon}(x) \leq P_{\infty}+\eta\ \quad\text{for}\  |x| \geq R(\eta).
\end{equation}
Due to \eqref{eq3.11}, we can find  $r>\max\{8, R(\eta), \eta^{-\frac{1}{\lambda p^\prime }},\eta^{-\frac{1}{\lambda q^\prime }}\}$ and subsequence which still denoted by $\{z_{n_k}\}$, such that
\begin{equation}\label{eq3.15}
\int_{r<|x|<2 r}\left|\psi_{n_k}\right|^{q^{\prime}} d x<\eta\quad \text{and}\quad \int_{r<|x|<2 r}\left|\phi_{n_k}\right|^{p^{\prime}} d x <\eta,\quad  \forall k .
\end{equation}
Define $w_{n_k}:=1_{\{|x|\geq2r\}}z_{n_k}=(\hat{\psi}_{n_k},\hat{\phi}_{n_k})^T$, from Lemma \ref{Lem3.1}, \eqref{eq3.15} and H\"{o}lder inequality, we have 
\begin{equation}\label{eq3.16}
\begin{split}
&\bigg|\langle J^\prime_\varepsilon(z_{n_k}),(\hat{\psi}_{n_k},0)^T \rangle-\langle J^\prime_\varepsilon(w_{n_k}),(\hat{\psi}_{n_k},0)^T \rangle\bigg|
=\bigg|\int_{|x|<2r}P_\varepsilon^{\frac{1}{p}}\phi_{n_k}\mathbf{R}(Q_\varepsilon^{\frac{1}{q}}\hat{\psi}_{n_k})dx\bigg|\\
\leq & \bigg|\int_{|x|<r}P_\varepsilon^{\frac{1}{p}}\phi_{n_k}\mathbf{R}(Q_\varepsilon^{\frac{1}{q}}\hat{\psi}_{n_k})dx\bigg|+\bigg|\int_{r<|x|<2r}P_\varepsilon^{\frac{1}{p}}\phi_{n_k}\mathbf{R}(Q_\varepsilon^{\frac{1}{q}}\hat{\psi}_{n_k})dx\bigg|\\
\leq & C(N,p,q)C_1^2\|P\|^{\frac{1}{p}}_\infty \|Q\|^{\frac{1}{q}}_\infty r^{-\lambda}+C(N,p,q)\|P\|_\infty^{\frac{1}{p}}\|Q\|_\infty^{\frac{1}{q}}\|\psi_{n_k}\|_{q^\prime}\bigg(\int_{r<|x|<2r}|\phi_{n_k}|^{p^\prime}dx\bigg)^{\frac{1}{p^\prime}}\\
\leq &C(N,p,q)C_1^2\|P\|^{\frac{1}{p}}_\infty \|Q\|^{\frac{1}{q}}_\infty r^{-\lambda}+C(N,p,q)C_1\|P\|^{\frac{1}{p}}_\infty \|Q\|^{\frac{1}{q}}_\infty\eta^{\frac{1}{p\prime}}\\
\leq & C_2(r^{-\lambda}+\eta^{\frac{1}{p\prime}})\\
\leq& 2C_2  \eta^{\frac{1}{p\prime}}.
\end{split}
\end{equation}
Similarly, we obtain that 
$$|\langle J^\prime_\varepsilon(z_{n_k}),(0,\hat{\phi}_{n_k})^T \rangle-\langle J^\prime_\varepsilon(w_{n_k}),(0,\hat{\phi}_{n_k})^T \rangle|\leq 2C_2  \eta^{\frac{1}{q\prime}}.$$
Without loss of generality, by \eqref{eq3.13} we suppose that 
\begin{equation}\label{eq3.17}
\int_{\R^N}|\hat{\psi}_{n_k}|^{q^\prime}dx=\int_{\{|x|>2 r\}}|\psi_{n_k}|^{q^\prime}dx\geq \delta_0 \quad \text{for all}\  k\geq 2r.
\end{equation}
Note that the choice of $\eta$ and the fact that $\langle J_\varepsilon^\prime(z_{n_k}),(\hat{\psi}_{n_k},0)^T \rangle\rightarrow0$ as $k\rightarrow\infty$, when combined with \eqref{eq3.16} and \eqref{eq3.17}, allow us to find a $k_0=k_0(r,\eta,\delta_0)\geq 2r$ such that
\begin{equation}\label{eq3.18}
\begin{split}
\int_{\R^N}\hat{\psi}_{n_k}\mathbf{K}_{q, p}^{Q_\varepsilon, P_{\varepsilon}}\hat{\phi}_{n_k}dx=&\int_{\R^N}|\hat{\psi}_{n_k}|^{q^\prime}dx-\langle J_\varepsilon^\prime(w_{n_k}),(\hat{\psi}_{n_k},0)^T\rangle\\
=&\int_{\R^N}|\hat{\psi}_{n_k}|^{q^\prime}dx-\langle J_\varepsilon^\prime(z_{n_k}),(\hat{\psi}_{n_k},0)^T\rangle\\
&+[\langle J_\varepsilon^\prime(z_{n_k}),(\hat{\psi}_{n_k},0)^T\rangle-\langle J_\varepsilon^\prime(w_{n_k}),(\hat{\psi}_{n_k},0)^T\rangle]\\
\geq&\int_{\R^N}|\hat{\psi}_{n_k}|^{q^\prime}dx-|\langle J_\varepsilon^\prime(z_{n_k}),(\hat{\psi}_{n_k},0)^T\rangle|-2C_2\eta^{\frac{1}{p\prime}}\\
\geq&\frac{\delta_0}{3}, \quad \text{for all} \  k\geq k_0,
\end{split}
\end{equation}
which implies that 
\begin{equation}\label{eq3.19}
\int_{\R^N}w_{n_k}\mathbf{K}_{\varepsilon}w_{n_k}dx=2\int_{\R^N}\hat{\psi}_{n_k}\mathbf{K}_{q, p}^{Q_\varepsilon, P_{\varepsilon}}(\hat{\phi}_{n_k})dx\geq\frac{2\delta_0}{3}>0, \quad \text{for all} \  k\geq k_0.
\end{equation}
It follows from $1<p^{\prime}, q^{\prime}<2$ that 
\begin{equation}\label{eq3.20}
\left(\frac{1}{q^{\prime}}-\frac{1}{2}\right)\int_{\R^N}|\hat{\psi}_{n_k}|^{q^\prime}dx+\left(\frac{1}{p^{\prime}}-\frac{1}{2}\right)\int_{\R^N}|\hat{\phi}_{n_k}|^{p^\prime}dx\leq J_\varepsilon(z_{n_k}).
\end{equation}
 For $ k \geq k_0$, let $ \tilde{w}_k:=((\frac{Q_{\varepsilon}}{Q_{\infty}})^{\frac{1}{q}}\hat{\psi}_{n_k},(\frac{P_{\varepsilon}}{P_{\infty}})^{\frac{1}{p}}\hat{\phi}_{n_k})$, using \eqref{eq3.18} we have
 $$\int_{\R^N}\tilde{w}_k\mathbf{K}_{\infty}\tilde{w}_kdx=\int_{\R^N}w_{n_k}\mathbf{K}_{\varepsilon}w_{n_k}dx\geq\frac{2\delta_0}{3}>0,$$ 
which implies that there is a $t_{k,\infty}>0$ such that $t_{k,\infty}\tilde{w}_k\in \mathcal{N}_\infty $. In order to obtain $t_{k,\infty}\leq1$, for any fixed $z_{n_k}\in \mathcal{N}_\varepsilon$, we consider the function 
$$h(t):=t^{q^\prime-2}\int_{\R^N}\bigg|\bigg(\frac{Q_\varepsilon}{Q_\infty}\bigg)^{\frac{1}{q}}\hat{\psi}_{n_k}\bigg|^{q^\prime}dx+t^{p^\prime-2}\int_{\R^N}\bigg|\bigg(\frac{P_\varepsilon}{P_\infty}\bigg)^{\frac{1}{p}}\hat{\phi}_{n_k}\bigg|^{p^\prime}dx-\int_{\R^N}\tilde{w}_k\mathbf{K}_{\infty}\tilde{w}_kdx,$$
where $1<q^\prime,p^\prime<2$. We claim that $h(t)$ is a strictly decreasing function of $t$. In fact, for any fixed $z_{n_k}\in\mathcal{N}_\varepsilon$, we have $z_{n_k}\neq0$ and 
\begin{equation*}
\begin{split}
h^\prime(t)&=(q^\prime-2)t^{q^\prime-3}\int_{\mathbb{R}^N}\bigg|\bigg(\frac{Q_\varepsilon}{Q_\infty}\bigg)^{\frac{1}{q}}\hat{\psi}_{n_k}\bigg|^{q^\prime}dx+(p^\prime-2)t^{p^\prime-3}\int_{\mathbb{R}^N}\bigg|\bigg(\frac{P_\varepsilon}{P_\infty}\bigg)^{\frac{1}{p}}\hat{\phi}_{n_k}\bigg|^{p^\prime}dx\\
&\leq(q^\prime-2)t^{q^\prime-3}\bigg(\frac{\alpha}{Q_\infty}\bigg)^\frac{q^\prime}{q}\int_{\mathbb{R}^N}|\hat{\psi}_{n_k}|^{q^\prime}dx+(p^\prime-2)t^{p^\prime-3}\bigg(\frac{\alpha}{P_\infty}\bigg)^\frac{p^\prime}{p}\int_{\mathbb{R}^N}|\hat{\phi}_{n_k}|^{p^\prime}dx\\
&<0, \quad \forall\ t>0. 
\end{split}
\end{equation*}
Using the fact that $\langle J^\prime_\varepsilon(z_{n_k}),z_{n_k}\rangle=0$, we get
\begin{equation*}
\begin{split}
h(1)=&\int_{\R^N}\bigg|\bigg(\frac{Q_\varepsilon}{Q_\infty}\bigg)^{\frac{1}{q}}\hat{\psi}_{n_k}\bigg|^{q^\prime}dx+\int_{\R^N}\bigg|\bigg(\frac{P_\varepsilon}{P_\infty}\bigg)^{\frac{1}{p}}\hat{\phi}_{n_k}\bigg|^{p^\prime}dx-\int_{\R^N}w_{n_k}\mathbf{K}_{\varepsilon}w_{n_k}dx\\
\leq&\bigg(\frac{Q_\infty+\eta}{Q_\infty}\bigg)^{\frac{q^\prime}{q}}\int_{\{|x|\geq 2r\}}|\psi_{n_k}|^{q^\prime}dx+\bigg(\frac{P_\infty+\eta}{P_\infty}\bigg)^{\frac{p^\prime}{p}}\int_{\{|x|\geq 2r\}}|\phi_{n_k}|^{p^\prime}dx-\int_{\R^N}w_{n_k}\mathbf{K}_{\varepsilon}w_{n_k}dx\\
\leq&\bigg(1+\bigg({\frac{\eta}{Q_\infty}}\bigg)^{\frac{q^\prime}{q}}\bigg)\int_{\mathbb{R}^N}|\psi_{n_k}|^{q^\prime}dx+\bigg(1+\bigg({\frac{\eta}{P_\infty}}\bigg)^{\frac{p^\prime}{p}}\bigg)\int_{\mathbb{R}^N}|\phi_{n_k}|^{p^\prime}dx-\int_{\R^N}z_{n_k}\mathbf{K}_{\varepsilon}z_{n_k}dx\\
=&\bigg({\frac{\eta}{Q_\infty}}\bigg)^{\frac{q^\prime}{q}}\int_{\mathbb{R}^N}|\psi_{n_k}|^{q^\prime}dx+\bigg({\frac{\eta}{P_\infty}}\bigg)^{\frac{p^\prime}{p}}\int_{\mathbb{R}^N}|\phi_{n_k}|^{p^\prime}dx\\
\leq&\bigg(\bigg({\frac{\eta}{Q_\infty}}\bigg)^{\frac{q^\prime}{q}}+\bigg({\frac{\eta}{P_\infty}}\bigg)^{\frac{p^\prime}{p}}\bigg)C_1\ \rightarrow 0 \quad\text{ as } \eta\rightarrow0^+,
\end{split}
\end{equation*} 
where also used \eqref{eq3.14} and the inequality $(a+b)^\beta\leq a^\beta+b^\beta$ for all $a,b>0$ and $0<\beta<1$.
In addition, $t_{k,\infty}\tilde{w}_k\in \mathcal{N}_\infty $ implies that $h(t_{k,\infty})=0$. If $t_{k,\infty}>1$ and we have $0=h(t_{k,\infty})<h(1)\leq(({\frac{\eta}{Q_\infty}})^{\frac{q^\prime}{q}}+({\frac{\eta}{P_\infty}})^{\frac{p^\prime}{p}})C_1\rightarrow0$ as $\eta\rightarrow0^+$, this is impossible.
Consequently, the above estimates imply that for all $k \geq k_0$,
\begin{equation*}
\begin{split}
c_\infty\leq& J_{\infty}\left(t_{k,\infty}\tilde{w}_k\right)\\
\leq&\bigg(1+\frac{\eta}{\alpha}\bigg)^{\max\{\frac{q^\prime}{q},\frac{p^\prime}{p}\}}\bigg[\left(\frac{1}{q^{\prime}}-\frac{1}{2}\right)\int_{\mathbb{R}^N}|\psi_{n_k}|^{q^{\prime}}dx + \left(\frac{1}{p^{\prime}}-\frac{1}{2}\right)\int_{\mathbb{R}^N}|\phi_{n_k}|^{q^{\prime}}dx \bigg]\\
=&\bigg(1+\frac{\eta}{\alpha}\bigg)^{\max\{\frac{q^\prime}{q},\frac{p^\prime}{p}\}}J_\varepsilon(z_{n_k}).
\end{split}
\end{equation*}
Letting $k\rightarrow\infty$, we have
$$c_\infty\leq\bigg(1+\frac{\eta}{\alpha}\bigg)^{\max\{\frac{q^\prime}{q},\frac{p^\prime}{p}\}}d,$$
and letting $\eta\rightarrow0$ we obtain that
$$c_\infty\leq d,$$
which contradicts the assumption $c_{\varepsilon}\leq d< c_{\infty}$. Therefore, \eqref{eq3.10} holds and $z_n\rightarrow z$ in $X$ as $n\rightarrow\infty$.
\hfill
\end{proof}

In this section, we conclude by establishing a representation Lemma for Palais-Smale sequences of the dual energy functional in the case where both $P$ and $Q$ are constants. This Lemma will help us obtain the concentration of dual ground state solutions. 
\begin{Lem}\label{Lem3.3}
Assume $P$, $Q$ are positive constants on $\R^N$, that is $P(t)>0$ and $Q(s)>0$. If $\{z_n\}\subset X$ is a $(PS)_d$-sequence for $J_{s,t}$ at some level $d>0$, then there exist $m\in \mathbb{N}$ nontrivial critical points $w^{(1)}, \ldots, w^{(m)}$ of $J_{s,t}$ and sequences $\{x^{(j)}_{n}:1\leq j\leq m \}\subset \R^N$ such that (up to a subsequence)
\begin{itemize}
\item[$(1)$]  $\|z_n-\sum\limits_{j=1}^m w^{(j)}(\cdot-x_n^{(j)})\| \rightarrow 0\  \text { as } n \rightarrow \infty$;
\item[$(2)$]  $\left|x_n^{(i)}-x_n^{(j)}\right| \rightarrow \infty \  \text { as } n \rightarrow \infty \text {, if  }\ i \neq j $;
\item[$(3)$] $\sum\limits_{j=1}^m J_{s,t}\left(w^{(j)}\right)=d$.
\end{itemize}
\end{Lem}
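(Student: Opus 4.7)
The plan is to carry out the classical iterative bubble extraction of Struwe and Benci--Cerami, adapted to the dual functional $J_{s,t}$. The decisive structural feature we exploit is that, because $P(t)$ and $Q(s)$ are constants in $x$, $J_{s,t}$ is invariant under translations $z \mapsto z(\cdot - y)$ for every $y \in \R^N$; this translation invariance is what enables us to recover compactness via a sequence of translations when weak convergence fails to give strong convergence. As a warm-up, boundedness of $\{z_n\}$ in $X$ follows from writing $J_{s,t}(z_n) - \tfrac{1}{2}\langle J_{s,t}'(z_n),z_n\rangle = \bigl(\tfrac{1}{q'}-\tfrac{1}{2}\bigr)\|\psi_n\|_{q'}^{q'} + \bigl(\tfrac{1}{p'}-\tfrac{1}{2}\bigr)\|\phi_n\|_{p'}^{p'}$ and using $1<q',p'<2$. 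Hence $z_n \rightharpoonup w^{(0)}$ in $X$ along a subsequence, and by the weak-to-weak continuity of the nonlocal derivative (cf.\ Lemma 2.6(b) of \cite{Ding-Wang2023JDE}) the limit $w^{(0)}$ is a critical point of $J_{s,t}$; we shall keep this bubble only if it is nontrivial, setting $x_n^{(1)}=0$ in that case.

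The inductive step is as follows. Suppose we have already extracted nontrivial critical points $w^{(1)},\dots,w^{(k-1)}$ and translations $x_n^{(1)},\dots,x_n^{(k-1)}$ satisfying the pairwise divergence in $(2)$, and define
\[
z_n^{(k)} \;:=\; z_n - \sum_{j=1}^{k-1} w^{(j)}\!\bigl(\cdot - x_n^{(j)}\bigr).
\]
The plan is to verify that $\{z_n^{(k)}\}$ is a Palais--Smale sequence for $J_{s,t}$ at level $d - \sum_{j<k} J_{s,t}(w^{(j)})$ and that $z_n^{(k)}(\cdot + x_n^{(j)}) \rightharpoonup 0$ in $X$ for each $j<k$. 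If $z_n^{(k)}\to 0$ in $X$, the decomposition is complete. Otherwise, I invoke a Lions-type non-vanishing dichotomy: if $\sup_{y\in\R^N}\int_{B_1(y)}\bigl(|\psi_n^{(k)}|^{q'}+|\phi_n^{(k)}|^{p'}\bigr)\,dx \to 0$, then using the boundedness of $\mathbf{R}:L^{q'}\to L^p$ combined with local compactness (the compactness of $\mathbf{1}_B\mathbf{K}_{p,q}^{P,Q}$ from Lemma \ref{Lem2.1}(b)) one shows $\int (z_n^{(k)})^T\mathbf{K}_{s,t}z_n^{(k)}\,dx \to 0$, which together with $\langle J_{s,t}'(z_n^{(k)}),z_n^{(k)}\rangle \to 0$ forces $z_n^{(k)}\to 0$ strongly, contradicting positivity of the residual level. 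Hence there exist $x_n^{(k)}$ with $|x_n^{(k)}-x_n^{(j)}|\to\infty$ for $j<k$ such that $z_n^{(k)}(\cdot+x_n^{(k)})\rightharpoonup w^{(k)}\ne 0$, and by translation invariance $w^{(k)}$ is a nontrivial critical point of $J_{s,t}$.

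The main technical obstacle is the Brezis--Lieb-type splitting of both the energy and its derivative under the subtraction of each translated bubble. For the $\|\cdot\|_{q'}^{q'}$ and $\|\cdot\|_{p'}^{p'}$ terms this is the standard Brezis--Lieb lemma, applied after translating back by $x_n^{(k)}$. The delicate point is the nonlocal quadratic form $\int z^T\mathbf{K}_{s,t}z\,dx$: one must show that cross terms between distinct translated bubbles vanish in the limit. This is precisely where Lemma \ref{Lem3.1} plays the central role. After approximating each $w^{(j)}$ in $X$ by a function supported in a ball $B_R$ (possible since $w^{(j)}\in X$), the condition $|x_n^{(i)}-x_n^{(j)}|\to\infty$ implies that for each fixed $R$ the supports of the truncated translates are eventually separated by an arbitrarily large distance $m$, so the decay bound $\bigl|\int u\,\mathbf{R}v\,dx\bigr|\le Cm^{-\lambda}\|u\|_{q'}\|v\|_{p'}$ makes every cross contribution $o(1)$. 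An analogous cutoff-plus-decay argument handles the error in the derivative splitting, testing against arbitrary directions in $X^\ast$. Finally, termination: since every nontrivial critical point of $J_{s,t}$ lies on $\mathcal{N}_{s,t}$ and hence satisfies $J_{s,t}(w)\ge c_{s,t}>0$, and since the residual energy drops by at least $c_{s,t}$ at each stage, the iteration stops after at most $m\le \lfloor d/c_{s,t}\rfloor$ steps with $z_n^{(m+1)}\to 0$ strongly in $X$, yielding (1)--(3) simultaneously.
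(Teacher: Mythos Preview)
Your overall plan---iterative bubble extraction via translations, Brezis--Lieb splitting of the power terms, and termination by the energy quantization $J_{s,t}(w)\ge c_{s,t}>0$---is exactly the route the paper takes. Two points deserve comment.

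First, your nonvanishing step has a genuine gap. You assert that if $\sup_{y}\int_{B_1(y)}\bigl(|\psi_n^{(k)}|^{q'}+|\phi_n^{(k)}|^{p'}\bigr)\,dx\to 0$ then $\int (z_n^{(k)})^T\mathbf{K}_{s,t}z_n^{(k)}\,dx\to 0$ follows from ``boundedness of $\mathbf{R}$ plus local compactness of $\mathbf{1}_B\mathbf{K}$.'' Neither of these ingredients does the job: local compactness on a fixed ball says nothing about the global integral, and global boundedness only gives $\bigl|\int\psi_n\mathbf{K}\phi_n\bigr|\le C\|\psi_n\|_{q'}\|\phi_n\|_{p'}$, which does not vanish under Lions' condition since there is no interpolation gain available in a pure $L^{q'}\times L^{p'}$ setting. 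The paper instead computes $\bigl(1-\tfrac1p-\tfrac1q\bigr)\int\psi_n\mathbf{K}_{q,p}^{Q(s),P(t)}\phi_n\,dx\to d>0$ and then invokes the nonvanishing theorem \cite[Theorem~2.7]{Ding-Wang2023JDE}, whose proof requires decomposing the kernel $\Psi$ into a compactly supported part and a remainder of small operator norm; this is what genuinely links local vanishing of the sequence to vanishing of the nonlocal bilinear form. You should cite that theorem directly.

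Second, your use of Lemma~\ref{Lem3.1} to kill cross terms between distinct translated bubbles is correct but is a detour the paper avoids. The paper subtracts one bubble at a time \emph{after translating}, so at each step $z_n^{(k)}(\cdot+x_n^{(k)})\rightharpoonup w^{(k)}$ weakly in $X$; the bilinear form then splits by pure bilinearity and weak convergence (their identity~\eqref{eq3.23}), because $\mathbf{K}\phi^{(k)}\in L^q$ and $\mathbf{K}^*\psi^{(k)}\in L^p$ are fixed test functions. No decay estimate or compact-support truncation is needed. Your route via Lemma~\ref{Lem3.1} works, but the paper's argument is both shorter and shows that Lemma~\ref{Lem3.1} plays no role in this particular lemma.
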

\begin{proof}
Let $\{z_n\}=\{(\psi_n,\phi_n)^T\}\subset X$ be a $(PS)_d$-sequence for $J_{s,t}$ at level $d>0$, we know that $\{z_n\}$ is bounded and there holds 
\begin{equation*}
\begin{split}
&\bigg(1-\frac{1}{p}-\frac{1}{q}\bigg)\int_{\R^N}\psi_n\mathbf{K}_{q,p}^{Q(s),P(t)}\phi_ndx=\bigg(1-\frac{1}{p}-\frac{1}{q}\bigg)\int_{\R^N}\phi_n\mathbf{K}_{p,q}^{P(t),Q(s)}\psi_ndx\\
&=J_{s,t}(z_n)-\frac{1}{q^\prime}\langle J_{s,t}^\prime(z_n),(\psi_n,0)^T \rangle-\frac{1}{p^\prime}\langle J_{s,t}^\prime(z_n),(0,\phi_n)^T \rangle\\
&\longrightarrow d>0\quad \text{as}\ n\rightarrow\infty.
\end{split}
\end{equation*}
Thus, by the nonvanishing theorem\cite[Theorem 2.7]{Ding-Wang2023JDE}, there exists $R$, $\zeta>0$ and a sequence $\{x^{(1)}_n\}\subset \R^N$ such that, up to sequence,
\begin{equation}\label{eq3.21}
\text{either}\quad \int_{B_R(x^{(1)}_n)}\left|\psi_n\right|^{q^{\prime}} dx \geq\zeta \quad \text { or } \quad \int_{B_R (x^{(1)}_n)}\left|\phi_n\right|^{p^{\prime}} dx\geq \zeta \quad \text { for all } n .
\end{equation}
Considering the following sequence:
$$\psi^{(1)}_n(x)=\psi_n(x+x^{(1)}_n),\quad \phi^{(1)}_n(x)=\phi_n(x+x^{(1)}_n),\quad z_n^{(1)}=(\psi^{(1)}_n,\phi^{(1)}_n)^T.$$
Note that $P_t,Q_s$ are positive constants, by the translation invariance, we find that $z_n^{(1)}$ is also a $(PS)_d$-sequence for $J_{s,t}$. Up to a subsequence, there exists $w^{(1)}=(\psi^{(1)},\phi^{(1)})^T\in X$ such that $z_n^{(1)}\rightharpoonup w^{(1)}$ in $X$, $\psi^{(1)}_n\rightarrow \psi^{(1)}$ in $L_{loc}^{q^\prime}(\R^N)$ and $\phi^{(1)}_n\rightarrow \phi^{(1)}$ in $L_{loc}^{p^\prime}(\R^N)$. Furthermore, $J_{s,t}(w^{(1)})\leq \lim\limits_{n\rightarrow\infty}J_{s,t}(z_n^{(1)})=d$ and $w^{(1)}$ is a nontrivial critical point of $J_{s,t}$. Next, we distinguish two cases:
\par
\textbf{Case 1:} If $J_{s,t}(w^{(1)})=d$, using the fact that $w^{(1)}$ is a nontrivial critical point of $J_{s,t}$, we have 
\begin{equation*}
\begin{split}
\bigg(1-\frac{1}{q}-\frac{1}{p}\bigg)\|\psi^{(1)}\|_{q^\prime}^{q^\prime}&=J_{s,t}(w^{(1)})-\frac{1}{p^\prime}\langle J_{s,t}^\prime(w^{(1)}),(0,\phi^{(1)})^T\rangle=d\\
&=\lim\limits_{n\rightarrow\infty}[J_{s,t}(z_n^{(1)})-\frac{1}{p^\prime}\langle J_{s,t}^\prime(z_n^{(1)}),(0,\phi^{(1)}_n)^T\rangle]\\
&=\bigg(1-\frac{1}{q}-\frac{1}{p}\bigg)\lim\limits_{n\rightarrow\infty}\|\psi^{(1)}_n\|_{q^\prime}^{q^\prime}.
\end{split}
\end{equation*}
Similarly, $\|\phi^{(1)}\|_{p^\prime}^{p^\prime}=\lim\limits_{n\rightarrow\infty}\|\phi^{(1)}_n\|_{p^\prime}^{p^\prime}$, and hence $z_n^{(1)}\rightarrow w^{(1)}$ in $X$ as $n\rightarrow\infty$, we finish the proof of this Lemma.
\par
\textbf{Case 2:} If $J_{s,t}(w^{(1)})<d$, we set $z_n^{(2)}=z_n^{(1)}-w^{(1)}=(\psi^{(2)}_n,\phi^{(2)}_n)^T$, where $\psi^{(2)}_n:=\psi^{(1)}_n-\psi^{(1)}$, $\phi^{(2)}_n:=\phi^{(1)}_n-\phi^{(1)}$. We now prove that $z_n^{(2)}$ is a $(PS)$-sequence for $J_{s,t}$ at level $d-J_{s,t}(w^{(1)})>0$. Indeed, by the Br\'{e}zis-Lieb Lemma, we know that 
\begin{equation}\label{eq3.22}
\begin{split}
\int_{\R^N}|\psi^{(2)}_n|^{q^\prime}dx&=\int_{\R^N}|\psi^{(1)}_n|^{q^\prime}dx-\int_{\R^N}|\psi^{(1)}|^{q^\prime}dx+o_n(1),\\
\int_{\R^N}|\phi^{(2)}_n|^{p^\prime}dx&=\int_{\R^N}|\phi^{(1)}_n|^{p^\prime}dx-\int_{\R^N}|\phi^{(1)}|^{p^\prime}dx+o_n(1).
\end{split}
\end{equation}
Using the fact that $\psi^{(1)}_n\rightharpoonup \psi^{(1)}$ in $L^{q^\prime}(\R^N)$ and $\phi^{(1)}_n\rightharpoonup \phi^{(1)}$ in $L^{p^\prime}(\R^N)$, we have 
\begin{equation}\label{eq3.23}
\int_{\R^N}\psi^{(1)}_n\mathbf{K}_{q,p}^{Q(s),P(t)}\phi^{(1)}_ndx-\int_{\R^N}\psi^{(1)}\mathbf{K}_{q,p}^{Q(s),P(t)}\phi^{(1)}dx=\int_{\R^N}\psi^{(2)}_n\mathbf{K}_{q,p}^{Q(s),P(t)}\phi^{(2)}_ndx+o_n(1).
\end{equation}
\eqref{eq3.22} and \eqref{eq3.23} imply that $J_{s,t}(z_n^{(2)})=J_{s,t}(z_n^{(1)})-J_{s,t}(w^{(1)})+o_n(1)=d-J_{s,t}(w^{(1)})+o_n(1)$ as $n\rightarrow\infty$. Note that $\psi^{(1)}_n-\psi^{(2)}_n-\psi^{(1)}=0$, $\phi^{(1)}_n-\phi^{(2)}_n-\phi^{(1)}=0$ and similar proofs in \cite[Lemma 2.3]{Evequoz2017AMPA}, we have $J_{s,t}^\prime(z_n^{(2)})=J_{s,t}^\prime(z_n^{(1)})-J_{s,t}^\prime(w^{(1)})+o_n(1)=o_n(1)$ as $n\rightarrow\infty$. Therefore, $\{z^{(2)}_n\}$ is a $(PS)_{d-J_{s,t}(w^{(1)})}$-sequence for $J_{s,t}$.
\par
Due to the fact that $d-J_{s,t}(w^{(1)})>0$, continuing to use the nonvanishing theorem, we can obtain two positive numbers $R_1,\zeta_1$ and a sequence $\{y_n\}\subset\R^N$ such that, up to sequence, 
\begin{equation*}
\text{either}\quad \int_{B_{R_1}(y_n)}\left|\psi^{(2)}_n\right|^{q^{\prime}} dx \geqslant\zeta_1 \quad \text { or } \quad \int_{B_{R_1} (y_n)}\left|\phi^{(2)}_n\right|^{p^{\prime}} dx\geqslant \zeta_1 \quad \text { for all } n .
\end{equation*}
Obviously, there exists a critical point $w^{(2)}=(\psi^{(2)},\phi^{(2)})\in X$ of $J_{s,t}$ that satisfies $z_n^{(2)}\rightharpoonup w^{(2)} $ in $X$, $\psi^{(1)}_n\rightarrow \psi^{(1)}$ in $L_{loc}^{q^\prime}(\R^N)$ and $\phi^{(1)}_n\rightarrow \phi^{(1)}$ in $L_{loc}^{p^\prime}(\R^N)$ as $n\rightarrow\infty$. Setting $x_n^{(2)}=x_n^{(1)}+y_n$, due to $z_n^{(2)}\rightharpoonup0$, we have $|y_n|=|x_n^{(2)}-x_n^{(1)}|\rightarrow\infty$ as $n\rightarrow\infty$. It is easy to see that 
$$z_n(\cdot)-(w^{(1)}(\cdot-x_n^{(1)})+w^{(2)}(\cdot-x_n^{(2)}))=z_n^{(2)}(\cdot+y_n-x_n^{(2)})-w^{(2)}(\cdot-x_n^{2})\rightharpoonup0 $$
in $X$ as $n\rightarrow\infty$. In addition, $J_{s,t}(w^{(2)})\leq\liminf\limits_{n\rightarrow\infty}J_{s,t}(z_n^{(2)})=d-J_{s,t}(w^{(1)})$. If $J_{s,t}(w^{(2)})=d-J_{s,t}(w^{(1)})$, similar to Case 1, we can obtain that $z_n^{(2)}(\cdot+y_n)\rightarrow w^{(2)}$ in $X$. Otherwise, we can continue iterating as in Case 2 and the iteration has to stop after finitely many steps, since for every nontrivial critical point $w$ of $J_{s,t}$ we have $J_{s,t}(w)\geq c_{s,t}>0$.
\end{proof}

\section{The proof of Theorem 1.1 and Theorem 1.2}
In this last section, we will prove Theorem \ref{Thm1.1} and Theorem \ref{Thm1.2}. We first establish the concentration of the sequences of ground states for the dual variational functional $J_\varepsilon$ as $\varepsilon=k^{-1}\rightarrow0^+$.
\begin{Lem}\label{Lem4.1}
Let $\{\varepsilon_n\}\subset(0,\infty)$ with $\varepsilon_n\rightarrow0$ as $n\rightarrow\infty$, and for each $n\in \mathbb{N}$ some $z_n=(\psi_n,\phi_n)\in\mathcal{N}_{\varepsilon_n}$, $\lim\limits_{n\rightarrow\infty}J_{\varepsilon_n}(z_n)=c_M$. Then, there exists a critical point $w_0$ of $J_0$ at level $c_M$ and a sequence $\{y_n\}\subset\R^N$ such that, up to sequence,
\begin{equation}\label{eq4.1}
\varepsilon_n y_n\rightarrow x_0\in M_P\cap M_Q \quad \text {and}\quad\|z_n(\cdot+y_n)-w_0\|\rightarrow0 
\quad\text{as}\ n\rightarrow\infty.
\end{equation}
\end{Lem}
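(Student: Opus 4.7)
The plan is to reduce \eqref{eq4.1} to a concentration--compactness analysis for the translated sequence $\tilde z_n(\cdot):=z_n(\cdot+y_n)$ with a judicious choice of $y_n$, and then to extract the limiting profile via a Nehari-type energy equality. Since $z_n\in\mathcal{N}_{\varepsilon_n}$, the Nehari identity gives $J_{\varepsilon_n}(z_n)=(\frac{1}{q'}-\frac{1}{2})\|\psi_n\|_{q'}^{q'}+(\frac{1}{p'}-\frac{1}{2})\|\phi_n\|_{p'}^{p'}$, and both coefficients are strictly positive because $p,q>2$; hence $\{z_n\}$ is bounded in $X$. Since $J_{\varepsilon_n}(z_n)\to c_M>0$ the sequence cannot vanish, so the nonvanishing theorem \cite[Theorem~2.7]{Ding-Wang2023JDE} supplies $R,\zeta>0$ and $\{y_n\}\subset\mathbb{R}^N$ such that, up to a subsequence, $\tilde z_n\rightharpoonup w_0=(\psi_0,\phi_0)\neq 0$ in $X$.

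The crucial and most delicate step is to show that $\{\varepsilon_n y_n\}$ is bounded. I would argue by contradiction: if $|\varepsilon_n y_n|\to\infty$, then the shifted weights $\tilde P_n(x):=P_{\varepsilon_n}(x+y_n)$ and $\tilde Q_n(x):=Q_{\varepsilon_n}(x+y_n)$ satisfy $\tilde P_n\leq P_\infty+o(1)$ and $\tilde Q_n\leq Q_\infty+o(1)$ uniformly on the growing balls $B_{1/\varepsilon_n}$, by assumption $(PQ_2)$. Mirroring the proof of Lemma~\ref{Lem3.2}, I would introduce the rescaled comparison
$$\tilde w_n:=\Bigl((\tilde Q_n/Q_\infty)^{1/q}\tilde\psi_n,\,(\tilde P_n/P_\infty)^{1/p}\tilde\phi_n\Bigr),$$
verify $\tilde w_n\in U_\infty^+$ via the Nehari identity, find $t_n>0$ with $t_n\tilde w_n\in\mathcal{N}_\infty$, and use the same monotone-decreasing-function argument as in Lemma~\ref{Lem2.6} to show $t_n\leq 1+o(1)$. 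The contributions from the tail $\{|x|\geq 1/\varepsilon_n\}$ to the quadratic form of $\mathbf{K}_\infty$ are absorbed using the decay estimate of Lemma~\ref{Lem3.1}. This chain yields $c_\infty\leq J_\infty(t_n\tilde w_n)\leq J_{\varepsilon_n}(z_n)+o(1)\to c_M$, contradicting the strict inequality $c_M<c_\infty$ guaranteed by $(PQ_2)$ via the strict monotonicity of the ground-state level in the weights. Hence $\{\varepsilon_n y_n\}$ is bounded, and along a subsequence $\varepsilon_n y_n\to x_0\in\mathbb{R}^N$.

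By continuity of $P,Q$, the translated weights $\tilde P_n,\tilde Q_n$ converge locally uniformly to the constants $P(x_0),Q(x_0)$. Testing the (approximate) identity $\langle J_{\varepsilon_n}'(z_n),\cdot\rangle\to 0$ against translates of compactly supported test functions---with the Palais--Smale status provided by Ekeland's principle on the $C^1$-constraint $\mathcal{N}_{\varepsilon_n}$ if $z_n$ is only a minimizing sequence---and using the local compactness of $1_B\mathbf{K}_{p,q}^{P,Q}$ from Lemma~\ref{Lem2.1}(b) to pass to the limit in the nonlinear terms, I conclude that $w_0$ is a nontrivial critical point of $J_{x_0,x_0}$; in particular $w_0\in\mathcal{N}_{x_0,x_0}$ and $J_{x_0,x_0}(w_0)\geq c_{x_0,x_0}$.

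Finally, the Nehari identity for $\tilde z_n$, together with the weak lower semicontinuity of the $L^{q'}$- and $L^{p'}$-norms, yields
$$c_M=\lim_n J_{\varepsilon_n}(z_n)\geq \Bigl(\frac{1}{q'}-\frac{1}{2}\Bigr)\|\psi_0\|_{q'}^{q'}+\Bigl(\frac{1}{p'}-\frac{1}{2}\Bigr)\|\phi_0\|_{p'}^{p'}=J_{x_0,x_0}(w_0).$$
The argument of Lemma~\ref{Lem2.6}, transferred to the constant-weight problem with $P(x_0)\leq\bar P$ and $Q(x_0)\leq\bar Q$, gives $c_{x_0,x_0}\geq c_M$, so the chain collapses to $J_{x_0,x_0}(w_0)=c_{x_0,x_0}=c_M$. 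A short comparison on the ray $t\mapsto J_{x_0,x_0}(tw_0)$ versus $t\mapsto J_0(tw_0)$---they differ only by the scalar factor $\mu:=P(x_0)^{1/p}Q(x_0)^{1/q}/(\bar P^{1/p}\bar Q^{1/q})\leq 1$ in the interaction term---shows that $\mu<1$ would force $c_{x_0,x_0}>c_M$; hence $\mu=1$ and $x_0\in M_P\cap M_Q$. The enforced norm equalities $\|\tilde\psi_n\|_{q'}\to\|\psi_0\|_{q'}$ and $\|\tilde\phi_n\|_{p'}\to\|\phi_0\|_{p'}$, combined with the weak convergence and the Radon--Riesz property of the uniformly convex spaces $L^{q'},L^{p'}$, then upgrade $\tilde z_n\rightharpoonup w_0$ to strong convergence in $X$, completing the proof of \eqref{eq4.1}.
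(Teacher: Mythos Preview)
Your strategy is genuinely different from the paper's, and most of it (the Ekeland modification, identifying $w_0$ as a critical point of $J_{x_0,x_0}$, the energy chain $c_M\geq J_{x_0,x_0}(w_0)\geq c_{x_0,x_0}\geq c_M$, the $\mu=1$ argument forcing $x_0\in M_P\cap M_Q$, and Radon--Riesz for the upgrade to strong convergence) is sound. The paper instead first transforms $z_n$ to the \emph{fixed} limit functional: setting $z_{0,n}:=((Q_{\varepsilon_n}/\bar Q)^{1/q}\psi_n,(P_{\varepsilon_n}/\bar P)^{1/p}\phi_n)$ one has $\int z_{0,n}^T\mathbf{K}_0 z_{0,n}=\int z_n^T\mathbf{K}_{\varepsilon_n}z_n$, so after projecting by some $t_{0,n}\in(0,1]$ one obtains a minimizing sequence for $J_0$ on $\mathcal{N}_0$; Ekeland and the representation Lemma~\ref{Lem3.3} then give $\{y_n\}$ together with \emph{strong} convergence $z_{0,n}(\cdot+y_n)\to w_0$ before any location analysis. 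Boundedness of $\varepsilon_n y_n$ then follows from a one-line Fatou argument comparing directly with $c_M$ (not $c_\infty$), using that strong convergence.

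Your boundedness argument, by contrast, has a real gap. To get $t_n\leq 1+o(1)$ via the monotone-decreasing-function device you need
\[
h(1)=\int\bigl[(\tilde Q_n/Q_\infty)^{q'/q}-1\bigr]|\tilde\psi_n|^{q'}\,dx+\int\bigl[(\tilde P_n/P_\infty)^{p'/p}-1\bigr]|\tilde\phi_n|^{p'}\,dx\leq o(1).
\]
On the good ball $B_{1/\varepsilon_n}$ the bracket is $o(1)$, but on the tail it is only bounded by $(\bar Q/Q_\infty)^{q'/q}-1>0$, and nothing you have at this stage forces $\int_{|x|>1/\varepsilon_n}|\tilde\psi_n|^{q'}\to 0$: the nonvanishing theorem places \emph{some} mass near $y_n$, not all of it. Lemma~\ref{Lem3.1}, which you invoke, controls the bilinear form $\int u\mathbf{R}v$ for supports separated by distance $m$; it says nothing about the $L^{q'}$- or $L^{p'}$-mass on a tail, which is what enters $h(1)$. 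Hence the chain $c_\infty\leq J_\infty(t_n\tilde w_n)\leq J_{\varepsilon_n}(z_n)+o(1)$ is not established. What the paper's reduction to $J_0$ buys is precisely that strong convergence is secured \emph{before} boundedness, so Fatou's lemma (with the already convergent $|\psi_{0,n}(\cdot+y_n)|^{q'}$) immediately yields the contradiction $c_M\geq(\bar Q/Q_\infty)^{q'/q}(1/q'-1/2)\|\psi_0\|_{q'}^{q'}+(\bar P/P_\infty)^{p'/p}(1/p'-1/2)\|\phi_0\|_{p'}^{p'}>c_M$.
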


\begin{proof}
For each $n\in \mathbb{N}$, consider $z_{0,n}=(\psi_{0,n},\phi_{0,n})=((\frac{Q_{\varepsilon_n}}{\bar{Q}})^{\frac{1}{q}}\psi_n,(\frac{P_{\varepsilon_n}}{\bar{P}})^{\frac{1}{p}}\phi_n)$. Similar to the proof of  Lemma \ref{Lem2.6} and Lemma \ref{Lem3.2}, we know that $z_{0,n}\in U_{0}^+$ and there exist $0<t_{0,n}\leq 1 $ such that $t_{0,n}z_{0,n}\in \mathcal{N}_0$. We first claim that $\{t_{0,n}z_{0,n}\}$ is a minimizing sequence for $J_0$ on $\mathcal{N}_0$, i.e. $J_0(t_{0,n}z_{0,n})\rightarrow c_M=\inf\limits_{z\in \mathcal{N}_0}J_0(z)$ as $n\rightarrow\infty$. In fact, using the fact that $z_n\in\mathcal{N}_{\varepsilon_n}$ and $t_{0,n}z_{0,n}\in \mathcal{N}_0$, we have 
\begin{equation*}
\begin{split}
c_{ M}&\leq J_0(t_{0,n}z_{0,n})=\bigg(\frac{1}{q^\prime}-\frac{1}{2}\bigg)t_{0,n}^{q^\prime}\|\psi_{0,n}\|_{q^\prime}^{q^\prime}+\bigg(\frac{1}{p^\prime}-\frac{1}{2}\bigg)t_{0,n}^{p^\prime}\|\phi_{0,n}\|_{p^\prime}^{p^\prime}\\
&\leq\max\{t^{q^\prime}_{0,n},t^{p^\prime}_{0,n}\}\bigg[\bigg(\frac{1}{q^\prime}-\frac{1}{2}\bigg)\|\psi_n\|_{q^\prime}^{q^\prime}+\bigg(\frac{1}{p^\prime}-\frac{1}{2}\bigg)\|\phi_n\|_{p^\prime}^{p^\prime}\bigg]\\
&\leq J_{\varepsilon_n}(z_n)\rightarrow c_M \quad \text{as}\ n\rightarrow\infty.
\end{split}
\end{equation*}
Hence, $\{t_{0,n}z_{0,n}\}$ is a minimizing sequence for $J_0$ on $\mathcal{N}_0$, combined with the Ekeland's variational principle, there is a $(PS)_{c_M}$-sequence $\{w_n\}=\{(\bar{\psi}_n,\bar{\phi}_n)\}\subset X$ such that $\|z_{0,n}-w_n\|\rightarrow0$ as $n\rightarrow\infty$. From the Case 1 of Lemma \ref{Lem3.3}, we know that there exists a critical point $w_0=(\psi_0,\phi_0)\in  X$ for $J_0$ at the level $c_M$ and a sequence $\{y_n\}\subset\R^N$ such that $\|w_n(\cdot+y_n)-w_0\|\rightarrow0$ as $n\rightarrow\infty$. Hence, $z_{0,n}(\cdot+y_n)\rightarrow w_0$ in $X$ as $n\rightarrow\infty$, i.e., 
\begin{equation}\label{eq4.2}
\psi_{0,n}(\cdot+y_n)\rightarrow \psi_0\  \text{in}\  L^{q^\prime}(\R^N) \quad  \text{and}\quad  \phi_{0,n}(\cdot+y_n)\rightarrow \phi_0 \  \text{in}\  L^{p^\prime}(\R^N).
\end{equation} 
\par
Next, we claim that the sequence $\{\varepsilon_n y_n\}$ is bounded. If not, there exists some subsequence still denoted $\{\varepsilon_n y_n\}$ that satisfies  $\lim\limits_{n\rightarrow\infty}|\varepsilon_n y_n|=+\infty$. By $(PQ_1)$, we know that $Q_\infty\neq0$ and $P_\infty\neq0$. Using $(PQ_2)$, Fatou's Lemma and \eqref{eq4.2}, we get
\begin{equation*}
\begin{split}
c_M=&\lim\limits_{n\rightarrow\infty} J_{\varepsilon_n}(z_n)=\lim\limits_{n\rightarrow\infty}\bigg[\bigg(\frac{1}{q^\prime}-\frac{1}{2}\bigg)\|\psi_n\|_{q^\prime}^{q^\prime}+\bigg(\frac{1}{p^\prime}-\frac{1}{2}\bigg)\|\phi_n\|_{p^\prime}^{p^\prime}\bigg]\\
=&\lim\limits_{n\rightarrow\infty}\bigg[\bigg(\frac{1}{q^\prime}-\frac{1}{2}\bigg)\|\psi_n(x+y_n)\|_{q^\prime}^{q^\prime}+\bigg(\frac{1}{p^\prime}-\frac{1}{2}\bigg)\|\phi_n(x+y_n)\|_{p^\prime}^{p^\prime}\bigg]\\
\geq&\bigg(\frac{1}{q^\prime}-\frac{1}{2}\bigg)\liminf\limits_{n\rightarrow\infty}\int_{\R^N}\bigg(\frac{\bar{Q}}{Q(\varepsilon_n x+\varepsilon_ny_n)}\bigg)^{\frac{q^\prime}{q}}|\psi_{0,n}(x+y_n)|^{q^\prime}dx\\
&+\bigg(\frac{1}{p^\prime}-\frac{1}{2}\bigg)\liminf\limits_{n\rightarrow\infty}\int_{\R^N}\bigg(\frac{\bar{P}}{P(\varepsilon_n x+\varepsilon_ny_n)}\bigg)^{\frac{p^\prime}{p}}|\psi_{0,n}(x+y_n)|^{p^\prime}dx\\
\geq& \bigg(\frac{1}{q^\prime}-\frac{1}{2}\bigg)\bigg(\frac{\bar{Q}}{Q_\infty}\bigg)^{\frac{q^\prime}{q}}\|\psi_0\|_{q^\prime}^{q^\prime}+\bigg(\frac{1}{p^\prime}-\frac{1}{2}\bigg)\bigg(\frac{\bar{P}}{P_\infty}\bigg)^{\frac{p^\prime}{p}}\|\phi_0\|_{p^\prime}^{p^\prime}\\
>&\bigg(\frac{1}{q^\prime}-\frac{1}{2}\bigg)\|\psi_0\|_{q^\prime}^{q^\prime}+ \bigg(\frac{1}{p^\prime}-\frac{1}{2}\bigg)\|\phi_0\|_{p^\prime}^{p^\prime}=J_0(w_0)=c_M,
\end{split}
\end{equation*}
which is a contradiction. Thus $\{\varepsilon_n y_n\}$ is bounded, up to a subsequence, we assume that $\varepsilon_n y_n\rightarrow x_0\in\R^N$. 
\par
Finally, we conclude the proof of this Lemma by proving $x_0\in  M_P\cap M_Q$. Note that $J_0(w_0)=c_M$ and $J_0^\prime(w_0)=0$, by the dominated convergence Theorem we have 
\begin{equation*}
\begin{split}
&\bigg(\frac{1}{q^\prime}-\frac{1}{2}\bigg)\|\psi_0\|_{q^\prime}^{q^\prime}+\bigg(\frac{1}{p^\prime}-\frac{1}{2}\bigg)\|\phi_0\|_{q^\prime}^{q^\prime}=c_M=\lim\limits_{n\rightarrow\infty}J_{\varepsilon_n}(z_n)\\
=&\lim\limits_{n\rightarrow\infty}\bigg[\bigg(\frac{1}{q^\prime}-\frac{1}{2}\bigg)\int_{\R^N}\bigg(\frac{\bar{Q}}{Q(\varepsilon_n x+\varepsilon_ny_n)}\bigg)^{\frac{q^\prime}{q}}|\psi_{0,n}(x+y_n)|^{q^\prime} dx\\
&+\bigg(\frac{1}{p^\prime}-\frac{1}{2}\bigg)\int_{\R^N}\bigg(\frac{\bar{P}}{P(\varepsilon_n x+\varepsilon_ny_n)}\bigg)^{\frac{p^\prime}{p}}|\phi_{0,n}(x+y_n)|^{p^\prime} dx\bigg]\\
=&\bigg(\frac{1}{q^\prime}-\frac{1}{2}\bigg)\bigg(\frac{\bar{Q}}{Q(x_0)}\bigg)^{q^\prime-1}\|\psi_0\|_{q^\prime}^{q^\prime}+\bigg(\frac{1}{p^\prime}-\frac{1}{2}\bigg)\bigg(\frac{\bar{P}}{P(x_0)}\bigg)^{p^\prime-1}\|\phi_0\|_{q^\prime}^{q^\prime}.
\end{split}
\end{equation*}
Hence, $\bar{Q}=Q(x_0)$, $\bar{P}=P(x_0)$.
Consequently, using again the dominated convergence Theorem and \eqref{eq4.2}, we know 
\begin{equation*}
\begin{split}
&z_n(x+y_n)=\bigg(\bigg(\frac{\bar{Q}}{Q(\varepsilon_n x+\varepsilon_n y_n)}\bigg)^{\frac{1}{q}}\psi_{0,n}(x+y_n),\bigg(\frac{\bar{P}}{P(\varepsilon_n x+\varepsilon_n y_n)}\bigg)^{\frac{1}{p}}\phi_{0,n}(x+y_n)\bigg)\\
&\longrightarrow \bigg(\bigg(\frac{\bar{Q}}{Q(x_0)}\bigg)^{\frac{1}{q}}\psi_0, \bigg(\frac{\bar{P}}{P(x_0)}\bigg)^{\frac{1}{p}}\phi_0\bigg)=(\psi_0,\phi_0)=w_0\ \text{in}\ X \ \text{as}\ n\rightarrow\infty.
\end{split}
\end{equation*}
\hfill
\end{proof}
\par
\textbf{The proof of Theorem 1.1:} By $(PQ_2)$, we have $c_M<c_\infty$. Thus, it follows from Lemma \ref{Lem2.6} that there exists an $\varepsilon_0>0$ such that $c_\varepsilon<c_\infty$ for any $\varepsilon\in(0,\varepsilon_0)$. Ekeland's variational principle implies the existence of a $(PS)_{c_\varepsilon}$-sequence $\{z_n\}$ for $J_\varepsilon$ on $\mathcal{N}_\varepsilon$, and Lemma \ref{Lem3.2} implies that $\{z_n\}$ has a subsequence converging to $z_0\in X$ such that $J_\varepsilon(z_0)=c_\varepsilon$ and $J^\prime_\varepsilon(z_0)=0$.
\par
In order to show the concentration, setting $\varepsilon_n=k_n^{-1}$, for each $n$ and $x\in\R^N$, the dual ground state satisfies 
\begin{equation}\label{eq dual=}
w_n(x)=(u_n(x),v_n(x))=(k_n^{-\beta_1}\mathbf{R}(P_{\varepsilon_n}^{\frac{1}{p}}\phi_n)(k_n x),k_n^{-\beta_2}\mathbf{R}(Q_{\varepsilon_n}^{\frac{1}{q}}\psi_n)(k_n x)),\end{equation}
where $z_n=(\psi_n,\phi_n)\in X$ satisfies $J_{\varepsilon_n}(z_n)=c_{\varepsilon_n}$ and $J^\prime(z_n)=0$. It follows from Lemma \ref{Lem2.6} that $\lim\limits_{n\rightarrow\infty}J_{\varepsilon_n}(z_n)=\lim\limits_{n\rightarrow\infty} c_{\varepsilon_n}=c_M$, and Lemma \ref{Lem4.1} shows that there exists a pair $(x_0,x_0)\in M$ and a sequence $\{y_n\}\subset\R^N$ such that $x_n:=\varepsilon_n y_n\rightarrow x_0$ and $z_n(\cdot+y_n)\rightarrow w_0=(\psi_0,\phi_0)$ in $X$ as $n\rightarrow\infty$, where $w_0$ is a critical point for $J_0$ at the level $c_M$. Note that 
\begin{equation*}
\begin{split}
&(k^{\beta_1}u_n(k_n^{-1}x+x_n),k^{\beta_2}v_n(k_n^{-1}x+x_n))\\
&=\bigg(\mathbf{R}\bigg(P^{\frac{1}{p}}_{\varepsilon_n}\phi_n\bigg)(x+k_nx_n),\mathbf{R}\bigg(Q^{\frac{1}{q}}_{\varepsilon_n}\psi_n\bigg)(x+k_nx_n)\bigg)\\
&=\bigg(\mathbf{R}\bigg(P^{\frac{1}{p}}_{\varepsilon_n}(\cdot+y_n)\phi_n(\cdot+y_n)\bigg)(x),\mathbf{R}\bigg(Q^{\frac{1}{q}}_{\varepsilon_n}(\cdot+y_n)\psi_n(\cdot+y_n)\bigg)(x)\bigg)\\
&\rightarrow\bigg(\mathbf{R}\bigg(\bar{P}^{\frac{1}{p}}\phi_0\bigg),\mathbf{R}\bigg(\bar{Q}^{\frac{1}{q}}\psi_0\bigg)\bigg),
\end{split}
\end{equation*}
where we used the fact that the operator $\mathbf{R}$ and the functions $P,Q$ are continuous. In addition, setting
$$z_0=(u_0,v_0)=(\mathbf{R}(\bar{P}^{\frac{1}{p}}\phi_0),\mathbf{R}(\bar{Q}^{\frac{1}{q}}\psi_0)),$$
by $J_0(w_0)=c_M$ and $J^\prime_0(w_0)=0$ we know that $z_0$ is a dual ground state solution of the limit problem \eqref{eq1.5}. \qed
\par
To prove the multiplicity of dual ground state solutions, let $h(\varepsilon)$ be a positive function tending to $0$ as $\varepsilon\rightarrow0^+$ and $h(\varepsilon)>c_\varepsilon-c_M$ for all $\varepsilon>0$, we can define 
$$\Sigma_\varepsilon:=\{z\in \mathcal{N}_\varepsilon: J_\varepsilon(z)\leq c_M+h(\varepsilon)\}\subset\mathcal{N}_\varepsilon,$$
which is not empty for $\varepsilon>0$ small. Next, we will construct a mapping $\varphi_{\varepsilon,x_0}$ from $M$ to $\Sigma_\varepsilon$ and a mapping $\beta_\varepsilon$ from $\Sigma_\varepsilon$ to $M_\delta$ such that the composition $\beta_\varepsilon\circ\varphi_{\varepsilon,x_0}$ is homotopic to the inclusion $j:M\rightarrow M_\delta$. Let $\left(\psi_{\varepsilon, x_0}, \phi_{\varepsilon,x_0}\right)$ be as in Lemma \ref{Lem2.4}, we denote the mapping $\varphi_{\varepsilon,x_0}(x):M\rightarrow\Sigma_\varepsilon$ by 
$$\varphi_{\varepsilon,x_0}(x)=\left(\psi_{\varepsilon, x_0}, \phi_{\varepsilon,x_0}\right).$$
Let $\rho>0$ such that $M_\delta\subset B_\rho\times B_\rho$ and define $\chi:\R^N\rightarrow\R^N$ be a function given by 
$$\chi(x)= \begin{cases}x & \text{if}\ |x|\leq\rho, \\ \frac{\rho x}{|x|} & \text{if}\ |x|>\rho. \end{cases}$$
We define $\beta_\varepsilon:\Sigma_\varepsilon\rightarrow \R^N\times\R^N$ by 
$$\beta_\varepsilon(z)=(\beta_{1,\varepsilon}(\psi),\beta_{2,\varepsilon}(\phi))=\bigg(\frac{\int_{\R^N}\chi(\varepsilon x)|\psi(x)|^{q^\prime}dx}{\|\psi\|_{q^\prime}^{q^\prime}},\frac{\int_{\R^N}\chi(\varepsilon x)|\phi(x)|^{p^\prime}dx}{\|\phi\|_{p^\prime}^{p^\prime}}\bigg),$$
which corresponds to a local center of mass of $(\psi,\phi)$.
\par 
Changing variables, by the continuity of $\chi$ we know that 
\begin{equation}\label{eq4.3}
\begin{split}
\beta_{1,\varepsilon}(\psi_{\varepsilon,x_0})&=\frac{\int_{\R^N}\chi(\varepsilon x)|\psi_{\varepsilon,x_0}|^{q^\prime}dx}{\int_{\R^N}|\psi_{\varepsilon,x_0}|^{q^\prime}dx}=\frac{t^{q^\prime}_{\varepsilon,x_0}\int_{\R^N}(\frac{Q(x_0)}{Q(\varepsilon z+x_0)})^{\frac{q^\prime}{q}}\chi(\varepsilon z+x_0)|\eta(|\varepsilon z|)U(z)|^{q^\prime}dz}{t^{q^\prime}_{\varepsilon,x_0}\int_{\R^N}(\frac{Q(x_0)}{Q(\varepsilon z+x_0)})^{\frac{q^\prime}{q}}|\eta(|\varepsilon z|)U(z)|^{q^\prime}dz}\\
&\longrightarrow \chi(x_0)=x_0\quad \text{as}\ \varepsilon\rightarrow0^+.
\end{split}
\end{equation}
Similarly, we get $\beta_{2,\varepsilon}(\phi_{\varepsilon,x_0})=x_0+o(1)$. Therefore, for each pair of points $(x_0,x_0)\in M$, we have $\lim\limits_{\varepsilon\rightarrow0}\beta_\varepsilon(\varphi_{\varepsilon,x_0}(x))=(x_0,x_0)$. In addition, the limit holds uniformly for $(x_0,x_0)\in M_\delta$ due to the compactness of $M_\delta$.
\begin{Lem}\label{Lem4.2}
$$\lim\limits_{\varepsilon\rightarrow0^+}\sup\limits_{z\in \Sigma_\varepsilon}\inf\limits_{\xi\in M_{\frac{2}{\delta}}}|\beta_\varepsilon(z)-\xi|=0.$$
\end{Lem}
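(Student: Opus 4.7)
The plan is to argue by contradiction. Assume the conclusion fails: then there exist $\eta_0>0$, a sequence $\varepsilon_n\to 0^+$, and elements $z_n=(\psi_n,\phi_n)\in\Sigma_{\varepsilon_n}$ such that
$$\inf_{\xi\in M_{\delta/2}}\big|\beta_{\varepsilon_n}(z_n)-\xi\big|\geq \eta_0>0 \quad\text{for every }n.$$
The strategy is to show, to the contrary, that $\beta_{\varepsilon_n}(z_n)$ converges to some point of $M\subset M_{\delta/2}$ by combining the concentration result Lemma \ref{Lem4.1} with a change of variables in the definition of $\beta_\varepsilon$.

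First I would verify the hypotheses of Lemma \ref{Lem4.1}. Since $z_n\in\mathcal{N}_{\varepsilon_n}$ and $J_{\varepsilon_n}(z_n)\leq c_M+h(\varepsilon_n)$ by definition of $\Sigma_{\varepsilon_n}$, while Lemma \ref{Lem2.6} gives $c_{\varepsilon_n}\leq J_{\varepsilon_n}(z_n)$ and $c_{\varepsilon_n}\to c_M$, we conclude $J_{\varepsilon_n}(z_n)\to c_M$ as $n\to\infty$. Lemma \ref{Lem4.1} then supplies a least-energy critical point $w_0=(\psi_0,\phi_0)$ of $J_0$ and a sequence $\{y_n\}\subset\R^N$ with $\varepsilon_n y_n\to x_0$ for some $x_0\in M_P\cap M_Q$ and
$$\psi_n(\cdot+y_n)\to\psi_0 \text{ in }L^{q'}(\R^N),\qquad \phi_n(\cdot+y_n)\to\phi_0 \text{ in }L^{p'}(\R^N).$$

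Next I would compute the limit of $\beta_{\varepsilon_n}(z_n)$. Performing the translation $x\mapsto x+y_n$ in both integrals yields
$$\beta_{1,\varepsilon_n}(\psi_n)=\frac{\int_{\R^N}\chi(\varepsilon_n x+\varepsilon_n y_n)|\psi_n(x+y_n)|^{q'}\,dx}{\int_{\R^N}|\psi_n(x+y_n)|^{q'}\,dx},$$
and similarly for $\beta_{2,\varepsilon_n}(\phi_n)$. Since $x_0\in M_P\cap M_Q$ lies in the bounded set $M_P\cap M_Q\subset B_\rho$, the cut-off satisfies $\chi(x_0)=x_0$. Because $\chi$ is continuous and bounded on $\R^N$ and $\varepsilon_n x+\varepsilon_n y_n\to x_0$ pointwise, while $|\psi_n(\cdot+y_n)|^{q'}\to|\psi_0|^{q'}$ strongly in $L^1(\R^N)$, Vitali's convergence theorem (or a standard uniform-integrability argument using strong $L^{q'}$ convergence) gives
$$\int_{\R^N}\chi(\varepsilon_n x+\varepsilon_n y_n)|\psi_n(x+y_n)|^{q'}\,dx\longrightarrow \chi(x_0)\|\psi_0\|_{q'}^{q'}=x_0\,\|\psi_0\|_{q'}^{q'}.$$
The same argument applied to the denominator and to the $\phi$-integral, together with $\psi_0,\phi_0\neq 0$, gives $\beta_{\varepsilon_n}(z_n)\to(x_0,x_0)\in M$.

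Since $M\subset M_{\delta/2}$, this forces $\inf_{\xi\in M_{\delta/2}}|\beta_{\varepsilon_n}(z_n)-\xi|\to 0$, contradicting the standing assumption and completing the proof. The main technical obstacle is justifying the passage to the limit inside the weighted integrals; this is precisely where strong $L^{q'}$ and $L^{p'}$ convergence supplied by Lemma \ref{Lem4.1} (rather than mere weak convergence) is essential, together with uniform boundedness of $\chi$ and the fact that the limit point $x_0$ lies inside the ball where $\chi$ is the identity. Everything else is bookkeeping: the change of variable, the asymptotic $\varepsilon_n y_n\to x_0$, and Lemma \ref{Lem2.6} to feed the concentration lemma.
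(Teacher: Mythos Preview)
Your proof is correct and follows essentially the same approach as the paper: apply Lemma \ref{Lem4.1} to any sequence $z_n\in\Sigma_{\varepsilon_n}$ to obtain concentration at some $(x_0,x_0)\in M$, then use the change of variables $x\mapsto x+y_n$ together with the strong convergence and boundedness of $\chi$ to pass to the limit in $\beta_{\varepsilon_n}(z_n)$. The only cosmetic difference is that the paper phrases this directly by picking $z_n$ as an almost-maximizer of the supremum, whereas you argue by contradiction; both are standard ways to handle the subsequence extraction implicit in Lemma \ref{Lem4.1}.
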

\begin{proof}
Let $\{\varepsilon_n\}\subset(0,\infty)$ be any sequence that satisfies $\lim\limits_{n\rightarrow\infty}\varepsilon_n=0$. For any $n$ there exists $z_n\in \Sigma_{\varepsilon_n}$ such that 
\begin{equation}\label{eq4.5}
\inf\limits_{\xi\in M_{\frac{\delta}{2}}}|\beta_{\varepsilon_n}(z_n)-\xi|\geq \sup\limits_{z\in \Sigma_{\varepsilon_n}}\inf\limits_{\xi\in M_{\frac{\delta}{2}}}|\beta_{\varepsilon_n}(z_n)-\xi|-\frac{1}{n}.
\end{equation}
Lemma \ref{Lem4.1} ensures that there exists a pair $(x_0,x_0)\in M$ and a sequence $\{y_n\}\subset\R^N$ with the property $\varepsilon_n y_n\rightarrow x_0$ and $z_n(\cdot+y_n)\rightarrow w_0$ in $X$ as $n\rightarrow\infty$, where $w_0$ is the critical point for $J_0$ at level $c_M$. Arguing as in the proof \eqref{eq4.3}, we can conclude that 
\begin{equation}\label{eq4.4}
\lim\limits_{n\rightarrow\infty}\beta_{\varepsilon_n}(z_n)=(\chi(x_0),\chi(x_0))=(x_0,x_0).
\end{equation}
Thus, from \eqref{eq4.5} and \eqref{eq4.4} we obtain that 
$$0\leq\lim\limits_{n\rightarrow\infty}\sup\limits_{z\in \Sigma_{\varepsilon_n}}\inf\limits_{\xi\in M_{\frac{\delta}{2}}}|\beta_{\varepsilon_n}(z)-\xi|\leq\inf\limits_{\xi\in M_{\frac{\delta}{2}}}|(x_0,x_0)-\xi|=0.$$
Based on the arbitrariness of $\{\varepsilon_n\}$, we have completed the proof. 
\hfill
\end{proof}
\par
To prove Theorem \ref{Thm1.2}, we will use a Lemma connecting the relative category and the multiplicity of critical points, for the proof, see\cite[Lemma 2.2]{Cingolani-Lazzo2000JDE}. 
\par
\textbf{The proof of Theorem 1.2:} Let $\delta>0$, the homotopy mapping is as follows:
$$M\stackrel{\varphi_{\varepsilon,x_0}}{\longrightarrow}\Sigma_\varepsilon\stackrel{\beta_{\varepsilon}}{\longrightarrow}M_\delta$$
Firstly, we prove that $\varphi_{\varepsilon,x_0}(M)\subset \Sigma_\varepsilon$. Indeed, by Lemma \ref{Lem2.5} we know $J_\varepsilon(\psi_{\varepsilon,x_0},\phi_{\varepsilon,x_0})=J_\varepsilon(\varphi_{\varepsilon,x_0})\leq c_M+h(\varepsilon)$ for $(x_0,x_0)\in M$, and the existence of function $h(\varepsilon)$ that satisfies the required properties can be obtained from Lemma \ref{Lem2.6}. Thus, $\varphi_{\varepsilon,x_0}(M)\subset \Sigma_\varepsilon$.
By Lemma \ref{Lem4.2}, we obtain 
\begin{equation}\label{eq4.6}
\sup\limits_{z\in \Sigma_\varepsilon}\inf\limits_{\xi\in M_{\frac{2}{\delta}}}|\beta_\varepsilon(z)-\xi|<\frac{\delta}{2},\quad \forall \varepsilon<\varepsilon_0
\end{equation}
for some $\varepsilon_0>0$. \eqref{eq4.6} yields that $\text{dist}(\beta_\varepsilon(z),M)\leq\delta$ for any $z\in \Sigma_\varepsilon$, thus $\beta_\varepsilon(\Sigma_\varepsilon)\subset M_\delta$ and the map $\beta_\varepsilon\circ \varphi_{\varepsilon,x_0}$ is homotopic to the inclusion $j:M\rightarrow M_\delta$. Using the Lemma 2.2 obtained in \cite{Cingolani-Lazzo2000JDE}, we know that $\text{cat}_{\Sigma_\varepsilon}(\Sigma_\varepsilon)\geq\text{cat}_{M_\delta}(M)$ for $0<\varepsilon<\varepsilon_0$. The existence of at least $\text{cat}_{M_\delta}(M)$ distinct critical points of $J_\varepsilon$ can be obtained from the Lusternik-Schnirelmann theory for $C^1$- manifolds from \cite{Ribarska-Tsachev-Krastanov1995}. In addition, the concentration of solutions can be obtained as the proof of Theorem \ref{Thm1.1}.\qed

\section*{Acknowledgments}
The authors would like to thank the referees for his/her careful reading of the original manuscript and for his/her valuable comments concerning its improvement.
\bibliographystyle{plain}
\bibliography{Reference}
\end{document}